\def\thm@space@setup{
  \thm@preskip=4mm
  \thm@postskip=0mm
}
\colorlet{mylinkcolor}{violet}
\colorlet{mycitecolor}{YellowOrange}
\colorlet{myurlcolor}{Aquamarine}
\DeclarePairedDelimiter\set{\{}{\}}
\DeclarePairedDelimiter\floor{\lfloor}{\rfloor}
\let\preceq\preccurlyeq
\theoremstyle{plain} 
\newtheorem{theorem}{Theorem}
\newtheorem{corollary}[theorem]{Corollary} 
\newtheorem{problem}{Problem}
\newtheorem{conjecture}[problem]{Conjecture}
\newtheorem{lemma}[theorem]{Lemma} 
\newtheorem{proposition}[theorem]{Proposition}
\newtheorem{example}[theorem]{Example}
\theoremstyle{remark}
\declaretheorem[
  style=remark,
  name=Claim,
  within=theorem,
]{claim}
\newcommand{\bbN}{\mathbb{N}}
\newcommand{\cgB}{\mathcal{B}} 
\newcommand{\cgC}{\mathcal{C}}
\newcommand{\cgD}{\mathcal{D}}
\newcommand{\Oh}{\mathcal{O}}
\newcommand{\Inc}{\operatorname{Inc}}
\newcommand{\shad}{\operatorname{shad}}
\newcommand{\sd}{\operatorname{sd}}
\newcommand{\se}{\operatorname{se}}
\newcommand{\bdim}{\operatorname{bdim}}
\newcommand{\NTL}{\operatorname{NTL}} 
\newcommand{\NTR}{\operatorname{NTR}}
\newcommand{\NES}{Ne\v{s}et\v{r}il}
\newcommand{\NESPUD}{Ne\v{s}et\v{r}il and Pudl\'{a}k}
\let\le\leqslant
\let\ge\geqslant
\let\leq\leqslant
\let\geq\geqslant
\let\subsetneq\varsubsetneq
\let\epsilon\varepsilon
\newcommand{\Wleft}{W_L}
\newcommand{\Wright}{W_R}
\renewenvironment{enumerate}{\begin{enumorig}[label=\textup{(\roman*)}, noitemsep, 
topsep=2pt plus 2pt, labelindent=.2em, leftmargin=*, widest=iii]}{\end{enumorig}}
\newenvironment{enumerateAlpha}{\begin{enumorig}[label=\textup{(\alph*)}, noitemsep, 
topsep=2pt plus 2pt, labelindent=.2em, leftmargin=*, widest=iii]}{\end{enumorig}}
\let\old@setaddresses\@setaddresses 
\def\@setaddresses{\bigskip\bgroup\parindent 0pt\let\scshape\relax\old@setaddresses\egroup}
\begin{document} 
\title[Boolean dimension and dim-boundedness: Planar cover graph with a zero] 
{Boolean dimension and dim-boundedness:\\Planar cover graph with a zero}

\author[H.~S.~Blake]{Heather Smith Blake}
\address[H.~S.~Blake]{Mathematics \& Computer Science Department\\
  Davidson College\\ 
  Davidson, North Carolina 28035}
\email{hsblake@davidson.edu}

\author[P.~Micek]{Piotr Micek}
\address[P.~Micek]{Theoretical Computer Science Department\\ 
  Faculty of Mathematics and Computer Science\\ 
  Jagiellonian University\\ 
  Kraków, Poland}

\email{piotr.micek@uj.edu.pl}

\author[W.~T.~Trotter]{William T. Trotter}
\address[W.~T.~Trotter]{School of Mathematics\\ 
  Georgia Institute of Technology\\ 
  Atlanta, Georgia 30332}
\email{trotter@math.gatech.edu}

\thanks{P.\ Micek is partially supported by a Polish National Science 
 Center grant (BEETHOVEN; UMO-2018/31/G/ST1/03718).  H.\ S. Blake
 and W.\ T. Trotter are supported by grants from the Simons Foundation.}

\subjclass[2010]{06A07} 
\date{Nov 14, 2022}
                
\keywords{Poset, planar graph, dimension, standard example}

\begin{abstract}
  In 1989, \NESPUD\ posed the following challenging question: Do planar posets 
  have bounded Boolean dimension?  We show that every poset with a planar cover 
  graph and a unique minimal element has Boolean dimension at most~$13$.
  As a consequence, we are able to show that there is a reachability labeling 
  scheme with labels consisting of $\Oh(\log n)$ bits for planar digraphs 
  with a single source.  The best known scheme for general planar digraphs 
  uses labels with $\Oh(\log^2 n)$ bits 
  [Thorup JACM 2004], and it remains open to determine whether a scheme using 
  labels with $\Oh(\log n)$ bits exists. 
  The Boolean dimension result is proved in tandem with a second result showing that
  the dimension of a poset with a planar cover graph and a unique minimal 
  element is bounded by a linear function of its standard example number.
  However, one of the major challenges in dimension theory is to determine 
  whether dimension is bounded in terms of standard example number for all 
  posets with planar cover graphs.
\end{abstract} 

\maketitle

\section{Introduction}\label{sec:introduction}

\subsection{Dimension and Boolean dimension}

Partially ordered sets, called \emph{posets} for short, are
combinatorial structures with applications in many areas of mathematics and 
theoretical computer science. 
The most widely studied measure of a poset's complexity is
its dimension, as defined by Dushnik and Miller~\cite{DM41}.  
A \emph{linear extension} $L$ of a poset $P$ is a total order on 
the elements of $P$ such that if $x\leq y$ in $P$, then $x\leq y$ in $L$. 
A \emph{realizer} of a poset $P$ is a set $\set{L_1,\ldots,L_d}$ of
linear extensions of $P$ such that
\[
  x \leq y\Longleftrightarrow (x\leq y\text{ in $L_1$}) 
  \wedge \cdots \wedge (x\leq y\text{ in $L_d$}),
\]
for all $x, y\in P$.  The \emph{dimension}, denoted by $\dim(P)$, is 
the minimum size of a realizer of $P$.  

Realizers provide a compact scheme for handling comparability queries: 
Given a realizer $\set{L_1,\dots,L_d}$ for a poset $P$, then a
query of the form "is $x\leq y$?" can be answered by looking at the
relative position of $x$ and $y$ in each of the $d$ linear extensions
of the realizer. 

As noted by Gambosi, \NES\ and Talamo~\cite{GNT90}, these observations 
motivate a more general encoding of posets.  
A \emph{Boolean realizer} of a poset $P$ is a sequence of linear orders, 
i.e.\ not necessarily linear extensions, 
$(L_1,\ldots,L_d)$ of the elements of $P$
and a $d$-ary Boolean formula $\phi$ such that
\[
  x \leq y \Longleftrightarrow \phi((x\leq y\text{ in $L_1$}),\ldots,(x\leq y\text{ in $L_d$}))=1,
\]
for every $x, y\in P$. 
The \emph{Boolean dimension} of $P$, denoted by $\bdim(P)$, is a
minimum size of a Boolean realizer.  Clearly, for every
poset $P$ we have $\bdim(P) \leq \dim(P)$.

As is well known, when $n\ge4$, the maximum dimension of a poset on $n$ elements
is $\floor{n/2}$.  The upper bound in this statement is evidenced
by the following construction:
When $d\ge2$, the \emph{standard example} $S_d$ is a poset
whose ground set is $\{a_1,\ldots,a_d, b_1,\ldots,b_d\}$ with $a_i < b_j$
in $S_d$ if and only if $i\neq j$ (see Figure~\ref{fig:stand6}). 
Dushnik and Miller~\cite{DM41} observed that $\dim(S_d) = d$.  
On the other hand, it is an instructive exercise to show that 
$\bdim(S_d)\leq4$ for every $d\ge2$.  In~\cite{NP89}, \NESPUD\ showed that 
the maximum Boolean dimension among posets on $n$ elements is $\Theta(\log{n})$. 

\begin{figure}
  \centering
  \includegraphics[scale=.8]{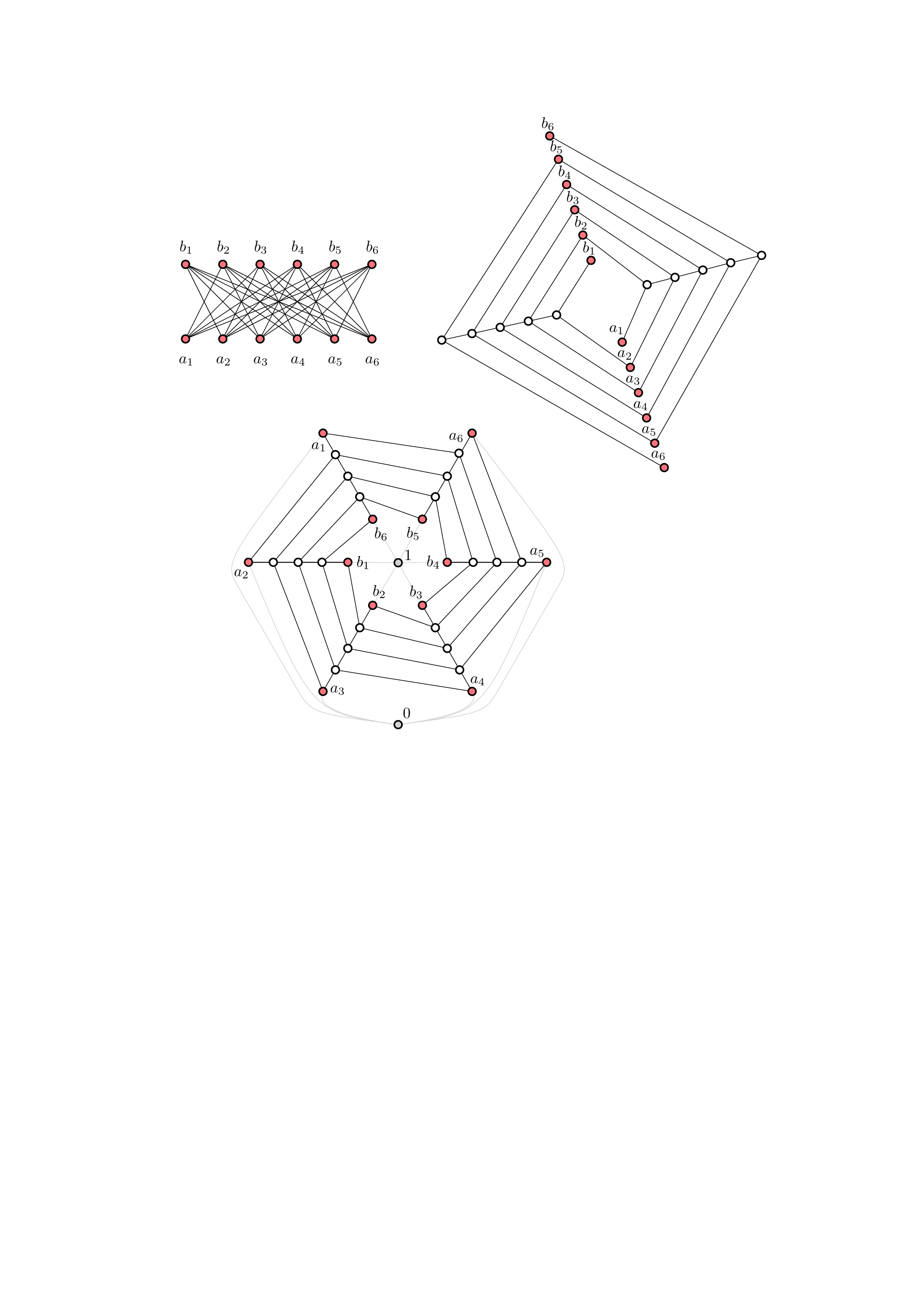}
  \caption{\label{fig:stand6} The standard example $S_6$ (left). Kelly's
  planar poset containing $S_6$ as a subposet (right). 
  The wheel construction of the poset with a planar cover graph, with a 
  single minimal element and a single maximal element, and containing $S_6$ 
  as a subposet (bottom). Note also that at the bottom we have a planar drawing 
  of the cover graph (not diagram) and the poset relation goes ``inwards''.}
  \label{fig:Kelly-wheel}
\end{figure}

The \emph{cover graph} of a poset $P$ is the graph whose vertices are the
elements of $P$ with edge set $\{xy\mid x<y \text{ in $P$ and there
is no $z$ with $x<z<y$ in $P$}\}$.  
Somewhat unexpectedly, posets with planar cover graphs can have
arbitrarily large dimension.  In 1978, Trotter~\cite{Tro78} gave the ``wheel
construction'' illustrated in Figure~\ref{fig:Kelly-wheel}. This construction
shows that there are posets with planar cover graphs, a unique minimal element,
a unique maximal element, and arbitrarily large dimension.  In 1981,
Kelly~\cite{Kel81} gave a construction (also illustrated in Figure~\ref{fig:Kelly-wheel})
that shows that there are posets with planar order diagrams that have 
arbitrarily large dimension.   In both of these constructions, the fact that
dimension is large stems from the fact that the posets contain large standard examples.
Note that the posets in Kelly's construction have a large number of 
minimal elements.  However, their cover graphs have pathwidth 
at most $3$.  On the other hand, the posets in Trotter's construction have a 
unique minimal element, but their cover graphs have unbounded treewidth. 
We also note that the Boolean dimension of the posets in both constructions
is bounded. 

Here is an intriguing question posed in~\cite{NP89} 
that remains unanswered to this day:
\begin{problem}[Ne\v{s}et\v{r}il, Pudl\'ak 1989]\label{problem:NESPUD}
  Do posets that have planar cover graphs\footnote{%
    The original question was posed for posets with planar diagrams.
    We ask a slightly more general question to establish an immediate connection with 
    reachability labelings.} have bounded Boolean dimension?
\end{problem}

Ne\v{s}et\v{r}il and Pudl\'{a}k suggested an approach for a
negative resolution of this question that involves an auxiliary 
Ramsey-type problem for planar posets.  However, no progress in this direction
has been made.
From the positive side,  researchers have in recent years investigated 
conditions on cover graphs that bound
Boolean dimension.  M\'{e}sz\'{a}ros, Micek and Trotter~\cite{MMT19} proved that 
the Boolean dimension of a poset is bounded in terms of the Boolean dimension of 
its $2$-connected blocks.  Felsner, M\'{e}sz\'{a}ros, and Micek~\cite{FMM20} 
proved that posets with cover graphs of bounded treewidth have bounded 
Boolean dimension.  
On the other hand, as noted previously, the Kelly examples have 
unbounded dimension, but their cover graphs have pathwidth at most $3$.

The first of the two principal theorems in this paper is:
\begin{theorem}
  \label{thm:bdim-13}
  If $P$ is a poset with a planar cover graph and a unique minimal element, then
  \[
    \bdim(P) \leq 13.
  \]
\end{theorem}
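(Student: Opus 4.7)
My plan is to exploit the unique minimum $\hat{0}$ and a plane embedding of the cover graph to extract a constant number of canonical linear orders of $V(P)$, together with a Boolean formula that decodes comparability from them. Since dimension itself is unbounded on this class (via Trotter's wheel construction in Figure~\ref{fig:Kelly-wheel}), one cannot hope to use linear extensions; the leverage must come from allowing linear orders that are not extensions but whose combined comparison patterns encode a specific planar-topological certificate for $x\leq y$.

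I would begin by fixing a planar embedding of the cover graph with $\hat{0}$ on the outer face and stratifying the elements by the BFS distance $\ell(x)$ from $\hat{0}$. The embedding equips every element $y>\hat{0}$ with a canonical leftmost shortest upward path $L(y)$ and rightmost shortest upward path $R(y)$ from $\hat{0}$ to $y$; concatenating $L(y)$ with the reverse of $R(y)$ yields a closed Jordan curve $C_y$ bounding a region $\Omega_y$. The central structural claim I would aim to establish is that $x\leq y$ in $P$ if and only if $\ell(x)\leq \ell(y)$ and $x$ lies in $\overline{\Omega_y}$, together with a mild side condition distinguishing elements of $L(y)\cap R(y)$. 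The only-if direction should follow from planarity and the extremal choice of $L$ and $R$ as leftmost and rightmost shortest paths. The if-direction is the crucial content: given $x$ inside $\Omega_y$ at a lower level, one must route an upward cover path from $x$ to $y$ staying inside $\overline{\Omega_y}$, which I expect requires an inductive argument on $\ell(y)-\ell(x)$ together with a careful analysis of how the curves $C_z$ for $z$ on $C_y$ nest.

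With the structural equivalence in hand, the second phase is to encode ``$\ell(x)\leq\ell(y)$'' and ``$x\in\overline{\Omega_y}$'' by a bounded collection of linear orders on $V(P)$: one order ranking vertices by BFS level; an in-order traversal of a planar BFS spanning tree encoding global left/right position; and several orders that for each $y$ rank the other elements by the position at which the leftmost, respectively rightmost, path from $\hat{0}$ to $y$ separates them from $y$. A fixed Boolean formula $\phi$ of bounded arity then evaluates to $1$ exactly when the relative positions in these orders simultaneously certify that $x$ is to the right of $L(y)$, to the left of $R(y)$, and no higher than $y$. The companion dim-boundedness theorem, proved in tandem per the abstract, plausibly provides intermediate structural lemmas about how standard examples and canonical paths interact, which I would reuse.

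The two main obstacles I anticipate are: first, proving the planar characterization $x\leq y \iff x\in\overline{\Omega_y}$ with the correct level condition, which is a delicate structural lemma requiring both planarity and the presence of $\hat{0}$ to control the routing of witnessing upward paths; and second, sharpening the count of linear orders down to $13$. The latter almost certainly demands aggressive reuse of orders, for instance by observing that the reversal of an order encodes symmetric left/right information for free, and by folding the level comparison into an order that is already used for the in-order traversal. Verifying that the chosen $\phi$ is correct on every pair will, I expect, require a short but careful case analysis based on the relative embedding positions of $x$, $L(y)$, $R(y)$, and the BFS tree, and this case analysis is where the argument will live or die.
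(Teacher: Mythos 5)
Your central structural claim --- that $x\leq y$ in $P$ if and only if $\ell(x)\leq\ell(y)$ and $x$ lies in the closed region bounded by the leftmost and rightmost canonical paths to $y$ --- is false, and the failure of exactly this equivalence is what makes the theorem hard. The paper calls an incomparable pair $(a,b)$ an \emph{inside} pair when $a$ sits geometrically inside the region enclosed by the two extremal witnessing paths to $b$ (more precisely, inside a block of a shadow of $b$), and such pairs with $a\parallel b$ genuinely occur: for the poset of Figure~\ref{fig:setup}, the pairs $(c,g)$, $(b,x)$, $(m,p)$ and $(q,z)$ are all incomparable inside pairs. No graph-distance side condition repairs this, since the element $a$ of an incomparable inside pair can perfectly well be at smaller BFS distance from $x_0$ than $b$; your decoder would then wrongly report $a\leq b$. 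Two linear extensions do suffice to handle the left, right and outside pairs (this is Proposition~\ref{pro:LO-RO}, and it matches the easy half of your plan), but the entire remaining machinery of the paper --- shadow blocks, shadow depth and its parity, addresses, the left-safe and right-safe sets, dangerous pairs, and the auxiliary strict orders $\NTR_\theta$ and $\NTL_\theta$ whose linear extensions supply $L_{10},\dots,L_{13}$ --- exists solely to distinguish genuine comparabilities from incomparable inside pairs. Your proposal contains no substitute for any of this; it assumes the problem away at the point where you say the if-direction ``is the crucial content.''

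There is a secondary gap in the encoding phase: you propose ``several orders that for each $y$ rank the other elements by where the canonical path to $y$ separates them,'' which is one order per element $y$, not a bounded number, and you give no mechanism for collapsing these into $\Oh(1)$ global linear orders --- yet this is precisely where the constant $13$ must come from. In the paper the bounded count is obtained by exhibiting globally defined sets of incomparable pairs that contain no strict alternating cycle (hence each is reversed by a single linear extension), plus a parity gadget ($L_7,L_8,L_9$) recovering the parity of $\sd(a)$ from three fixed orders, plus the four linear extensions of $\NTR_\theta$ and $\NTL_\theta$. Note also that the paper's canonical paths are greedily chosen leftmost and rightmost \emph{witnessing} (cover) paths rather than shortest paths; the shortest-path normalization does not obviously yield the $x_0$-consistency property (Proposition~\ref{pro:Tleft-and-Tright}) on which every left/right comparison in the argument rests.
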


\subsection{Dim-boundedness.} 
As referenced in the abstract, our proof of Theorem~\ref{thm:bdim-13} is proved
in tandem with a second result that is relevant to a conjecture that is even older
than Problem~\ref{problem:NESPUD}.  The \textit{standard example number} of a poset $P$, 
denoted $\se(P)$, is set to be~$1$ if $P$ does not contain a subposet isomorphic 
to the standard example~$S_2$; otherwise $\se(P)$ is the largest 
$d\ge2$ such that $P$ contains a subposet isomorphic to the standard example $S_d$.
Obviously, a poset that contains a large standard example has large dimension,
i.e., $\dim(P)\ge \se(P)$ for every poset $P$.  As is well known, if $P$ is
a distributive lattice, and $\dim(P)\ge3$, then $\dim(P)=\se(P)$, so there are 
important classes of posets for which the inequality $\dim(P)\ge\se(P)$ is tight.

However, dimension can be large without the presence of standard examples. 
The class of posets with standard example number~$1$ is the
class of \textit{interval orders}, and this special class of posets has been studied
extensively in the literature. As noted in~\cite{BHPT16},  
the maximum dimension of an interval order on $n$ elements is known to be 
within $\Oh(1)$ of $\log\log n+\frac{1}{2}\log\log\log n$. 

More generally, for a fixed value of the standard example number, the maximum value of 
dimension is polynomial in the size of the poset.
In 2020, Bir\'{o}, Hamburger, Kierstead,
P\'{o}r, Trotter and Wang~\cite{BHKPTW20} used random methods for
bipartite posets to show that 
there is a constant $n_0$ such that if $n>n_0$, then
there is an $n$-element poset $P$ with $\se(P)=2$ and 
$\dim(P)> \frac{n^{1/6}}{8\log n}$.
Moreover, for every integer $d\geq3$ there is a constant $\alpha_d>0$ 
so that $\displaystyle\lim_{d\to\infty} \alpha_d= 1$, and
the maximum dimension of $n$-element posets $P$ with $\se(P)<d$ is 
$\Omega\left(n^{\alpha_d}\right)$.  
Upper bounds are more challenging.  In 2015, Bir\'{o}, Hamburger and 
P\'{o}r~\cite{BHP15} proved that for a fixed value of $d\ge3$,
the maximum dimension among posets on $n$ elements that have standard
example number less than $d$ is $o(n)$.
It is a great challenge to verify if $o(n)$ can be replaced
by $\Oh(n^{\alpha})$ with some $\alpha<1$ depending only on $d$.

A class $\cgC$ of posets is 
\emph{$\dim$-bounded} if there is a function $f:\bbN\rightarrow\bbN$ such
that $\dim(P)\le f(\se(P))$ for every poset $P$ in $\cgC$. 
There is an analogous research theme in graph theory:  When $G$ is
a graph, $\chi(G)$ denotes the \textit{chromatic number} of $G$, and
$\omega(G)$ denotes the \textit{clique number} of $G$.  In this setting,
we have the trivial inequality: $\chi(G) \ge \omega(G)$.  
As is well known, there are triangle-free graphs with arbitrarily large
chromatic number.  Regardless, researchers have found interesting classes of 
graphs where chromatic number is bounded in terms of clique number.  Such 
classes are called $\chi$-\textit{bounded}. A recent survey
by Scott and Seymour~\cite{SS20} has just appeared, and this paper gives 
an excellent summary of the substantial progress on $\chi$-boundedness 
achieved within the last decade.

For posets, we have the following long standing conjectures:
\begin{conjecture}\hfill
  \label{con:dim-bounded}
  \begin{enumerate}
  \item The class of posets that have planar diagrams is $\dim$-bounded.
  \label{item:conj:diagram}
  \item The class of posets that have planar cover graphs is $\dim$-bounded.
  \label{item:conj:cover-graph}
\end{enumerate}
\end{conjecture}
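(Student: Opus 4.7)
The plan is to attack Conjecture~B(ii) (which implies (i), since every poset with a planar diagram automatically has a planar cover graph). My approach would build on the companion theorem announced in the introduction---that if $P$ has a planar cover graph and a unique minimal element, then $\dim(P)$ is bounded by a linear function of $\se(P)$---and try to remove the ``unique minimum'' hypothesis.

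First I would try the obvious reduction. Given a poset $P$ with a planar cover graph $G$ and a fixed planar embedding, check whether all minimal elements of $P$ lie on a common face. If so, adjoin a formal minimum $\hat{0}$ inside that face, joined by new cover edges to every minimal element of $P$. The resulting poset $P'$ has a planar cover graph, and adding a global minimum leaves both $\dim$ and $\se$ unchanged, so the companion theorem applied to $P'$ yields the desired bound on $\dim(P)=\dim(P')$ in this special case.

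In general, the minimal elements are scattered across many faces, and this is where the real difficulty lies. I would proceed by partitioning the minimal elements into clusters according to the face structure of the planar embedding---for instance via a BFS of the dual graph rooted at the outer face, or an iterative face-peeling procedure---and induct on the number of clusters. For each cluster, one applies the companion theorem to the subposet generated by the elements sitting above its minimal elements (after adjoining a local $\hat{0}$ for that cluster), and then attempts to assemble the resulting local realizers into a single realizer for $P$ whose total size remains a function of $\se(P)$ alone.

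The main obstacle is precisely this assembly step. A single element $x\in P$ may lie above minimal elements from many distinct clusters, and a critical pair $(a,b)$ involving $x$ may need to be reversed by linear extensions tailored to several different clusters simultaneously. Bounding $\dim$ (as opposed to $\bdim$) forces every reversal to occur in a common realizer of bounded total size, demanding much tighter coordination than Theorem~\ref{thm:bdim-13} requires. Showing that the standard-example-free obstructions propagate through the planar structure in a controlled, additive way---rather than combining multiplicatively across clusters---is the crucial missing piece, and is arguably the reason this conjecture has remained open since the 1980s.
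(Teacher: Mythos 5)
The statement you are trying to prove is not a theorem of the paper at all: it is Conjecture~\ref{con:dim-bounded}, which the authors explicitly present as open (part~(i) for more than forty years). The paper proves only the special case in which $P$ has a unique minimal element (Theorem~\ref{thm:dim-boundedness}), and your proposal does not close the gap between that special case and the full conjecture --- indeed, you say so yourself in your final paragraph. A sketch whose ``crucial missing piece'' is the entire assembly step is a research plan, not a proof, so there is nothing here that establishes the statement.

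On the substance of the plan: the easy reduction you describe (all minimal elements on a common face, adjoin $\hat{0}$ in that face) is correct and standard --- adding a global minimum preserves planarity of the cover graph, does not change $\dim$ when $\dim(P)\ge 2$, and cannot create new standard examples since the new element is comparable to everything. But the clustering-and-assembly strategy for the general case is exactly where all known attempts stall. The difficulty is not merely bookkeeping: the paper's entire machinery (leftmost/rightmost witnessing paths, the $(z,e_0)$-orderings, shadow blocks, the classification of incomparable pairs into left/right/outside/inside) is anchored to a \emph{single} root $x_0$ on the exterior face, and none of these notions is defined, let alone well-behaved, when different elements must be reached from different local minima. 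An element lying above minimal elements of several clusters destroys the $x_0$-consistency that Propositions~\ref{pro:Tleft-and-Tright}--\ref{pro:consistent-paths-transitive} depend on, so the local realizers you propose to glue are built from incompatible coordinate systems. Until one finds a replacement for this rooted structure --- or a genuinely different argument --- the conjecture remains open, and your write-up should present itself as a discussion of the obstruction rather than as a proof.
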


We believe the first published reference to Conjecture~\ref{con:dim-bounded}.\ref{item:conj:diagram} is
an informal comment on page 119 in~\cite{Tro-book} published
in 1992.  However, the conjecture was circulating among researchers soon
after the constructions illlustrated in Figure~\ref{fig:Kelly-wheel}
appeared.   Accordingly, Conjecture~\ref{con:dim-bounded}.\ref{item:conj:diagram} is more than~40 years old 
and obviously Conjecture~\ref{con:dim-bounded}.\ref{item:conj:cover-graph} is a stronger statement.

Here is our second principal theorem.
\begin{theorem}\label{thm:dim-boundedness}
  The class of posets
  with a planar cover graph and a unique minimal element is $\dim$-bounded. 
  Specifically, given a poset $P$ in this class, we have
  \[
    \dim(P)\le 2\se(P)+2.
  \]
\end{theorem}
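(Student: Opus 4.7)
The plan is to directly construct a realizer of $P$ of size $2\se(P)+2$. Let $\hat 0$ denote the unique minimum of $P$ and fix a plane embedding of the cover graph of $P$ in which $\hat 0$ lies on the outer face. For every element $v \neq \hat 0$, the paths from $\hat 0$ to $v$ in the cover graph inherit a natural linear order from the embedding; let $\Wleft(v)$ and $\Wright(v)$ denote the leftmost and rightmost such paths. These two spines, together with the planar regions they cut out, are the combinatorial objects around which the whole proof is organized.

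I would partition the critical pairs $(a,b)$ of $P$ into three families according to where $a$ sits relative to the spines of the elements above $b$. The first two families---those where $a$ lies strictly on one side of every spine originating from an element above $b$---should each be handled by a single linear extension built directly from the embedding: roughly, a left-first traversal of the ground set produces an extension $L_1$ that reverses every ``left'' critical pair, and a symmetric right-first traversal produces $L_2$. Checking that these side-traversals really extend the partial order (rather than merely rank the elements) is the first technical step, and it uses the unique minimum in an essential way, since it guarantees that every element has spines rooted at the single vertex $\hat 0$ from which the left/right orientation is globally coherent.

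For the remaining \emph{interior} critical pairs, where $a$ is wedged between spines of elements above $b$ on both sides, the plan is to proceed iteratively. After $k$ rounds I will have produced $2k$ further linear extensions and recorded witness elements $v_1,\dots,v_k$ drawn from successive residual collections of interior critical pairs. At each round the witness is chosen deep enough that two new extensions suffice to reverse every interior pair that involves it, while the remaining interior pairs live within a structurally smaller region of the embedding. The central structural claim is that the chosen witnesses always carry a standard example inside $P$: together with a matching sequence $u_1,\dots,u_k$ of elements below them, they satisfy the incomparabilities $u_i \not\leq v_i$ coming from the critical pairs together with the comparabilities $u_i < v_j$ for $i\neq j$ required by $S_k$. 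Since $P$ contains no $S_{\se(P)+1}$, this iteration must halt within $\se(P)$ rounds, yielding the total bound $2+2\se(P)$.

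The main obstacle is this structural claim---and, more specifically, verifying the comparabilities $u_i < v_j$ for $i\neq j$, which is what forces a genuine $S_k$ to appear rather than merely an antichain of critical pairs. It is here that planarity and the unique minimum interact delicately: planarity forces the spines of different witnesses to either nest or separate cleanly in the plane, and the unique minimum supplies the common base $\hat 0$ at which these spines can be compared and joined into the chains certifying the required comparabilities. This part of the argument is naturally developed in tandem with the construction used for Theorem~\ref{thm:bdim-13}, since both theorems rely on the same spines, regions, and witness-selection mechanism.
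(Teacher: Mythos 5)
Your opening is essentially the paper's first half: rooting leftmost/rightmost witnessing paths at the unique minimal element $x_0$, using the embedding to classify incomparable pairs by side, and reversing the non-interior pairs with two linear extensions $L_1,L_2$. That part is sound and matches Proposition~\ref{pro:LO-RO}. The gap is in the treatment of the interior (inside) pairs, which is the entire technical content of the theorem and which you leave as an assertion. Your proposed mechanism --- greedily selecting one witness $v_k$ per round, spending two linear extensions to reverse ``every interior pair that involves it,'' and recursing into a ``structurally smaller region'' --- is not developed enough to check, and as stated it does not obviously produce reversible sets at all: a reversible set must contain no strict alternating cycle, and the pairs involving a single witness element can still participate in alternating cycles with pairs from later rounds. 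The paper's partition of the inside pairs into $2\se(P)$ reversible sets is quite different: it first splits inside pairs by the \emph{parity} of the depth in their address $(j,\cgB)$ into $I_0\sqcup I_1$ (this parity split is indispensable --- without it, strict alternating cycles can mix pairs whose addresses have different depths, the pathology of Figure~\ref{fig:pathology}, and Claim~\ref{clm:cgI:3} fails), and then stratifies each $I_\theta$ by the length of the longest directed path in an auxiliary digraph $H_\theta$ whose strong and weak edges are defined via left pairs and helper pairs. Your sketch contains neither the parity decomposition nor any substitute for it.

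The second missing ingredient is the verification of the comparabilities $u_i< v_j$ for $i\neq j$, which you correctly identify as ``the main obstacle'' but then only gesture at via ``planarity forces the spines to nest or separate cleanly.'' In the paper this is exactly where the separating-path machinery enters: Propositions~\ref{pro:force-sides} and~\ref{pro:force-comp} show that if $a'$ and $b'$ sit on opposite sides of a separating path associated with a comparability $a<_P b$, then the needed comparability is forced; this yields both the transitivity of strong edges (Proposition~\ref{pro:strong-transitive}, which is what turns a long strong path into a copy of $S_n$) and the conversion of weak edges into strong ones. Without some version of this argument your structural claim is unsupported, and without the reversibility of the individual strata the count of $2\se(P)$ extra linear extensions is not justified. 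In short: correct architecture and correct final count, but the two load-bearing lemmas (reversibility of each stratum of inside pairs, and extraction of $S_n$ from a long stratum) are missing.
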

Since $\dim(P)\geq\se(P)$ for all $P$, 
the upper bound in Theorem~\ref{thm:dim-boundedness} is best possible 
to within a multiplicative factor of $2$.

The remainder of the paper is organized as follows.  In 
the next section, we give a brief treatment of reachability labeling schemes and the
connection with Boolean dimension.  This is followed by a
section with essential preparatory material on posets, dimension and Boolean
dimension.  The arguments for each of our two main theorems can be split
into two parts.  The first part is the same for both theorems, and this
common part will be contents of Section~\ref{sec:common}.  
This will be followed
by sections giving the second parts of the proofs for each of the two main theorems,
with the result for Boolean dimension given first.  

\section{Reachability labeling schemes and Boolean dimension}\label{sec:reachability}

Given two vertices $u$, $v$ in a directed graph (digraph, for short), 
we say that $u$ can \emph{reach} $v$ 
if there is a directed path from $u$ to $v$ in the digraph.
A class of digraphs $\cgC$ admits an 
\emph{$f(n)$-bit reachability scheme} if there exists a function 
$A:(\{0,1\}^*)^2\to\{0,1\}$
such that for every positive integer $n$ and every $n$-vertex digraph $G\in \cgC$ 
there exists $\ell:V(G)\to\{0,1\}^*$ such that 
$|\ell(v)|\leq f(n)$ for each vertex $v$ of $G$, and such that, for every two 
vertices $u$, $v$ of $G$

\[  A(\ell(u),\ell(v)) =
\begin{cases}
  1 & \text{if $u$ can reach $v$ in $G$;} \\
  0 & \text{otherwise.}
\end{cases}
\]
Bonamy, Esperet, Groenland, and Scott~\cite{BEGS21}
have recently devised a reachability labeling scheme
for the class of \emph{all} digraphs using labels of length at most $n/4+o(n)$.
This is best possible up to the lower order term as 
a simple counting argument forces 
every reachability labeling scheme 
for the class of \emph{all} $n$-vertex digraphs 
to use a label of length at least $n/4-o(n)$.

Quoting from~\cite{HRT15} ``\emph{In this paper we focus on the planar 
case, which feels particularly relevant when you live on a sphere.''}
In 2004, Thorup~\cite{Tho04} presented an $\Oh(\log^2 n)$-bit reachability 
labeling scheme for planar digraphs.  It remains open to answer whether 
more efficient schemes exist.
\begin{problem}\label{prob:labeling}
  Do planar digraphs admit an $\Oh(\log n)$-bit reachability labeling scheme?
\end{problem}

Planar digraphs with a single source $s$ and a single sink~$t$ admitting a plane
drawing with both $s$ and $t$ on the exterior face are called $st$-\emph{planar graphs.}
These graphs have a very simple structure, and within this class, it is straightforward 
to construct a $\Oh(\log n)$-bit reachability labeling scheme. 
This observation was an important tool in related work on reachability oracles by 
Holm, Rotenberg, and Thorup~\cite{HRT15}.

There is a standard technique for reducing reachability queries in digraphs to comparability 
queries in posets:\quad  Given a digraph $G$, contract each strongly connected 
component of $G$ to a single vertex. 
Let $G'$ be the resulting digraph which is 
obviously acyclic. Note also that if $G$ is planar, 
then $G'$ is planar as well.
Now given a labeling of $G'$ we extend it to a labeling of $G$
by assigning to each vertex $v$ of $G$ 
the label 
of the strong component of $v$ in the labeling of $G'$. 
Within an acyclic digraph $G'$, let $u\leq v$ if $u$ can reach $v$ in $G'$ 
for all $u$, $v$ in $G'$. Clearly $(G',\leq)$ forms a partially ordered set.
Thus, if we have an $f(n)$-bit comparability labeling scheme for posets with planar cover graphs, 
then we immediately get an $f(n)$-bit reachability scheme for general planar digraphs.

Note that if $\mathcal{C}$ is a class of posets with bounded Boolean dimension,
then $\mathcal{C}$ admits an $\Oh(\log n)$ comparability labeling scheme.
To see this, suppose that $\bdim(P)\le d$ for every poset $P$ in $\mathcal{C}$.
Now let $P$ be in $\mathcal{C}$, and let $(L_1,\ldots,L_d)$ with a formula $\phi$ be a Boolean realizer of $P$.
Let $n$ be the number of elements of $P$.  We label each element $x\in P$ with a 
bitstring of length $d\cdot\lceil\log n\rceil$ 
describing the positions of $x$ in $(L_1,\ldots,L_d)$. 
Now given labels for two elements $x, y\in P$ and the 
formula $\phi$, 
we can determine if $x \leq y$ in $P$. 
The formula $\phi$ is a function from $(\{0,1\}^d)^2$ to $\{0,1\}$, so there 
are only $2^{2^{2d}}$ possibilities, and these can be encoded with an additional 
$2^{2d}$ bits in each label.
We note that the proof of Theorem~\ref{thm:bdim-13} gives 
us conveniently the same $\phi$ for every poset $P$ 
with a planar cover graph and 
a unique minimal element, so these extra bits are not necessary.  

Therefore, we have the following corollary to Theorem~\ref{thm:bdim-13}, which 
we believe represents an important step towards a positive resolution of 
Problem~\ref{prob:labeling}.
\begin{corollary}\label{cor:reachability-scheme}
  Planar digraphs with a single source admit an $\Oh(\log n)$-bit reachability 
  labeling scheme.
\end{corollary}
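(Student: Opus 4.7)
The strategy is to invoke Theorem~\ref{thm:bdim-13} on the reachability poset obtained from $G$ after contracting its strongly connected components, and then to translate bounded Boolean dimension into a bit-labeling exactly as described in the paragraph immediately preceding this corollary.

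Concretely, given a planar digraph $G$ on $n$ vertices with a distinguished source $s$ from which every vertex is reachable, I would contract each strongly connected component of $G$ to a single vertex, obtaining an acyclic planar digraph $G'$. Form the reachability poset $P$ on $V(G')$ by setting $C \le D$ iff there is a directed path from $C$ to $D$ in $G'$. Two properties of $P$ are required. First, the cover graph of $P$ is a subgraph of the planar graph $G'$ and hence planar: whenever $D$ covers $C$ in $P$, the directed path from $C$ to $D$ that witnesses $C<D$ must have length one, since any intermediate vertex $E$ would give $C<E<D$. Second, the component containing $s$ is the unique minimum of $P$, because the hypothesis that $s$ reaches every vertex of $G$ places its component below every other component in the reachability order.

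Thus Theorem~\ref{thm:bdim-13} applies to $P$ and yields a Boolean realizer $(L_1,\ldots,L_{13})$ together with a formula $\phi$; as noted just before this corollary, $\phi$ can be chosen uniformly over the class. I would then label each vertex $v$ of $G$ by the $13$-tuple of positions (each written in $\lceil\log n\rceil$ bits) that the SCC of $v$ occupies in $L_1,\ldots,L_{13}$. The resulting label has length $\Oh(\log n)$, and a reachability query $(u,v)$ is answered by decoding the two labels, forming the thirteen comparison bits, and applying the fixed formula $\phi$.

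I do not foresee a genuine obstacle: the corollary is essentially a repackaging of Theorem~\ref{thm:bdim-13} together with the standard SCC-contraction reduction. The one point worth a brief check is the uniqueness of the minimum of $P$, which is precisely what distinguishes this single-source case from the general one left open by Problem~\ref{prob:labeling}.
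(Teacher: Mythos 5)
Your proof is correct and follows exactly the route the paper intends: SCC contraction, the observation that the condensation's reachability poset has a planar cover graph and (under the reading of ``single source'' as a vertex whose component reaches every other vertex, which is the reading needed for the corollary to follow from Theorem~\ref{thm:bdim-13}) a unique minimal element, and then the $13\lceil\log n\rceil$-bit positional labeling decoded by the uniform formula $\phi$. The paper offers no separate proof of the corollary beyond the two paragraphs preceding it, and you have supplied precisely the two details those paragraphs leave implicit, namely planarity of the cover graph via length-one witnessing paths for cover relations, and uniqueness of the minimum.
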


\section{Notation, Terminology and Essential Background Material}\label{sec:preliminaries}

We write $[k]$ as a
compact form of $\{1,\dots,k\}$.  

When $u$ and $v$ are distinct vertices in a tree $T$,
we denote by $uTv$ the unique path in $T$ from $u$ to $v$. 
This notation allows for the natural notion of concatenation when
working with paths, i.e., 
when $N$ and $N'$ are paths in a graph $G$, $a,b\in N$ and $b,c\in N'$,
then $aNbN'c$ denotes a walk in $G$ formed by concatenating $aNb$ with $bN'c$. 
When this convention is applied later in this paper, the
paths $aNb$ and $bN'c$  will have no vertices in common other than
$b$.  As a result, $aNbN'c$ will also be a path.


Let $P$ be a poset.  
When $a$ and $b$ are incomparable in $P$, we will write $a\parallel b$ in $P$,
and sometimes we will just use the short form
$a\parallel _P b$.  Analogous short forms will be
used for $<,>,\le,\ge$.  When $x\in P$, we let $U_P(x)$ consist of all
$v\in P$ such that $x<_P v$.  Also, we set $U_P[x]=U_P(x)\cup\{x\}$.
Of course, the statement that $x_0$ is the unique mininal element of
$P$ means the same as saying $U_P[x_0]$ 
contains all elements of $P$.  Analogously, $D_P(x)$ consists
of all $u\in P$ such that $u<_P x$, and $D_P[x]=D_P(x)\cup\{x\}$.

When $xy$ is an edge in the cover graph of $P$ and $x<_P y$, 
we say that $x$ is \emph{covered} by $y$ in $P$, or 
$y$ \emph{covers} $x$ in $P$.  
This constitutes a natural acyclic orientation of the cover graph of $P$,  
which will be used implicitly.

When $P$ is a poset, $x\in P$,
and $S$ is a subset of $P$ that does not contain $x$, we will
write $x\parallel_P S$ when $x\parallel_P y$ for every $y\in S$.
The notations $x<_P S$ and $x>_P S$ are defined analogously.

A \emph{witnessing path} $W$ in $P$ is a sequence $W=(u_0,\ldots,u_m)$ of 
elements in $P$ such that if $m>0$, then $u_i$ is \emph{covered} by $u_{i+1}$ in $P$ whenever $0\le i< m$.  
We will say in this case that $W$ is a witnessing path 
\emph{from} $u_0$ \emph{to} $u_m$. 
In discussions on sequences of elements,  we will use the terms prefix and suffix, 
i.e.,
when $(z_0,\dots,z_m)$ is a sequence, and $0\le i\le m$, 
the subsequence $(z_0,\dots,z_i)$
is a \emph{prefix} and the subsequence $(z_i,\dots,z_m)$ is a \emph{suffix} 
of the initial sequence.

Members of the ground set of a poset will be called
elements and points interchangeably. Also, when we are discussing a graph on
the same ground set as a poset, points and elements will also be called vertices.

Let $P$ be a poset, and let $\Inc(P)$ denote the set of all pairs $(a,b)$ 
such that $a\parallel_P b$.  
A subset $S$ of $\Inc(P)$ is \textit{reversible}
when there is a linear extension $L$ of $P$ such that
$b< a$ in $L$, for all $(a,b)\in S$.  
Now the dimension of $P$ 
can be redefined as follows: 
if $\Inc(P)$ is empty then $\dim(P)=1$ and otherwise 
$\dim(P)$ is the least positive integer $t$ for which 
$\Inc(P)$ can be covered by $t$ reversible sets.    

A sequence $((x_1,y_1), \dots, (x_k,y_k))$ of pairs from $\Inc(P)$ 
with $k \geq 2$ is an \emph{alternating cycle of size $k$} 
if $x_i\leq_P y_{i+1}$ for all $i\in\set{1,\ldots,k}$, cyclically 
(so $x_k\le_P y_1$ is required).  
Observe that if $((x_1,y_1), \dots, (x_k,y_k))$ is an alternating 
cycle in $P$, then any subset $I\subseteq \Inc(P)$ containing
all the pairs on this cycle is not reversible.

An alternating cycle $((x_1,y_1),\dots,(x_k,y_k))$ is \emph{strict} if we have
$x_i\le_P y_{j}$ if and only if $j=i+1$ (cyclically).
Note that in this case, $\{x_1, \dots, x_k\}$ and
$\{y_1, \dots, y_k\}$ are $k$-element antichains.  Note also that
in alternating cycles, we allow that $x_i=y_{i+1}$ for some or even
all values of $i$.  In~\cite{TM77}, Trotter and Moore made the following
elementary observation that has proven over time to be far reaching in nature:
A subset $I\subseteq\Inc(P)$ is reversible if and only 
if $I$ contains no strict alternating cycle.


Our primary focus will be on posets that have planar cover graphs.  When
$P$ has a planar cover graph $G$, 
we fix a drawing without edge crossings of
$G$ in the plane, we follow the standard convention that also considers an
element $x$ of the poset $P$ as a point in the plane.  Similarly, an edge or a 
path in $G$ will be considered as a subgraph of $G$ and a simply connected set of 
points in the plane.  When $N$ is a path in $G$, the vertices and edges of $N$ form
a simply connected set of points in the plane.  When $D$ is a cycle in $G$, the points 
in the plane belonging to the edges of $D$ form a simple closed curve.  In this 
case, we abuse notation by simply saying that a point in the plane is either on $D$, 
in the interior of $D$ or in the exterior of $D$, without futher reference to the curve.


We start with a compact summary of material which is largely taken from the
paper~\cite{KMT21+} by Kozik, Micek and Trotter.
Let $P$ be a poset with a planar cover graph $G$ and 
with a unique minimal element $x_0$.

Since $x_0$ is an element of $P$, we can fix a drawing without edge crossings
of $G$ with $x_0$ on the exterior face.  To assist in ordering edges in arguments
to follow, we append an imaginary edge $e_{-\infty}$ attached to $x_0$ in the 
exterior face.  This setup is illustrated in Figure~\ref{fig:setup},
and we will return to this example several times later in the paper.

\begin{figure}[!h]
  \begin{center}
    \includegraphics[scale=.8]{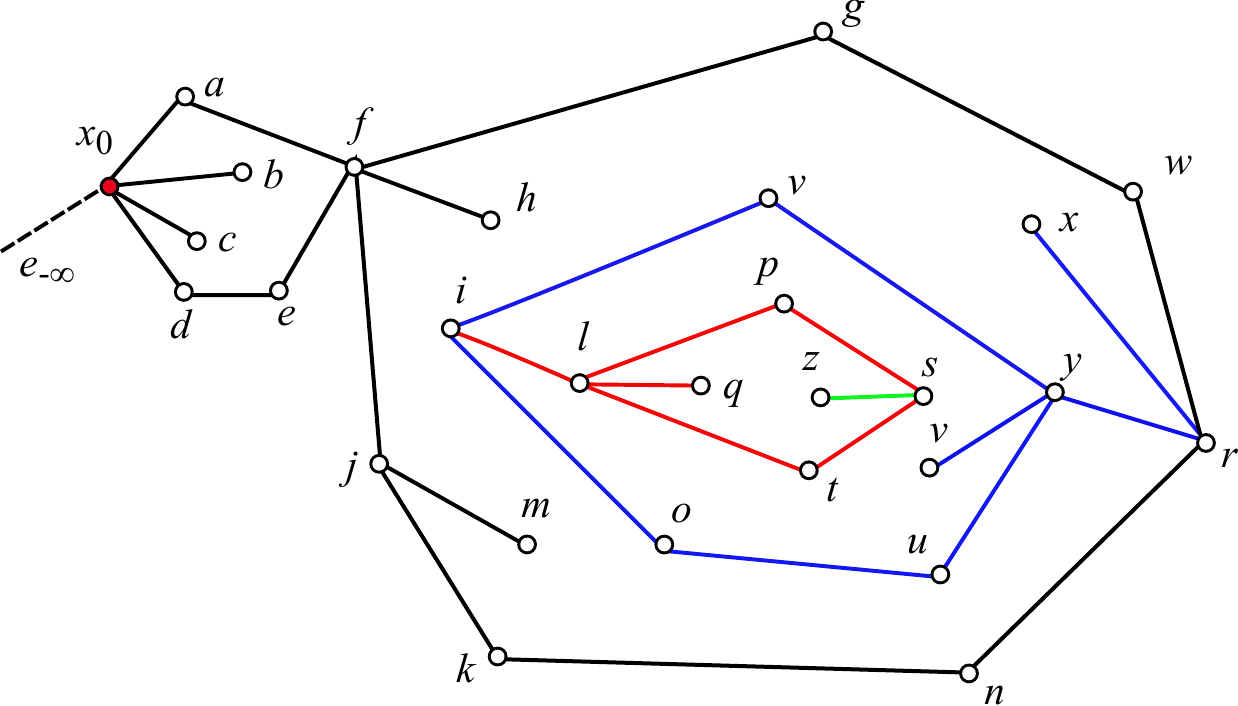}
  \end{center}
  \caption{
    A poset with a planar cover  and a unique minimal element $x_0$ drawn in the exterior face. An imaginary edge $e_{-\infty}$ is attached to $x_0$.
    In this figure, 
    the black and red edges are oriented left-to-right in the plane, 
    while the blue and green edges are oriented right-to-left.
  }
  \label{fig:setup}
\end{figure}


When $z$ is an element of $P$, 
there is a natural clockwise cyclic ordering of 
the edges of $G$ incident with $z$. 
When $e_0$, $e$, and $e'$ are edges (not necessarily distinct) 
incident to $z$, we will write $e_0\preceq e\preceq e'$, if 
starting with edge $e_0$ and proceeding 
in a clockwise manner around $z$, stopping at $e'$, 
we have visited the edge $e$. 
We can replace one or both of the $\preceq$ symbols with 
$\prec$ when the corresponding edges are distinct.

When $e_0$ is a particular edge incident to $z$, 
there is a clockwise \emph{linear} order on the edges incident with $z$ 
with $e_0$ the least element.  If $e$ and $e'$ are distinct edges and
neither is $e_0$, we say that $e\prec e'$ in the \emph{$(z,e_0)$-ordering} if 
$e_0\prec e\prec e'$.  To reinforce the geometric implications, 
we will also say that $e$ is \emph{left of} $e'$ in the $(z,e_0)$-ordering. 
Of course, we will also say that $e'$ is \emph{right of} $e$ in the
$(z,e_0)$-ordering when $e$ is left of $e'$ in the $(z,e_0)$-ordering.
The $(z,e_0)$-ordering on edges is illustrated in Figure~\ref{fig:ze-order}.


Let $u$ and $u'$ be
(not necessarily distinct) elements of $P$. 
Also, let $W$ and $W'$ be paths (not necessarily witnessing paths) in $P$ 
from $x_0$ to $u$ and $u'$, respectively.  We say that $W$ and $W'$ are 
$x_0$-\emph{consistent} if there is an element $z$ common to $W$ and $W'$ 
such that 
(1)~$x_0Wz=x_0W'z$; and 
(2)~$zWu$ and $zW'u'$ are disjoint except their common starting element $z$.
Note that $W$ and $W'$ are $x_0$-consistent whenever one is a prefix of the
other.  In fact, a path $W$ from $x_0$ to an element $u$ in $P$ 
is $x_0$-consistent with itself.

Now suppose that $W$ and $W'$ are $x_0$-consistent and neither is a prefix of
the other.  Let $z$ be the largest element of $P$ common to both
$W$ and $W'$, and let $e_0$ be the edge immediately before $z$ on the
path $x_0Wz$ (let $e_0=e_{-\infty}$ if $z=x_0$).
Let $e$ and $e'$ be the first edges of $zWu$ and $zW'u'$, respectively. 
We say that $W$ is $x_0$-\emph{left} of $W'$ if 
$e$ is left  of $e'$ in the $(z,e_0)$-ordering.  Symmetrically,
we say that $W$ is $x_0$-\emph{right} of $W'$ if
$e$ is right of $e'$ in the $(z,e_0)$-ordering.
Note that if $W$ and $W'$ are $x_0$-consistent, and neither is a prefix of the
other, then either $W$ is $x_0$-left of $W'$, or $W$ is $x_0$-right of $W'$.

\begin{figure}[!h]
  \begin{center}
    \includegraphics[scale=.8]{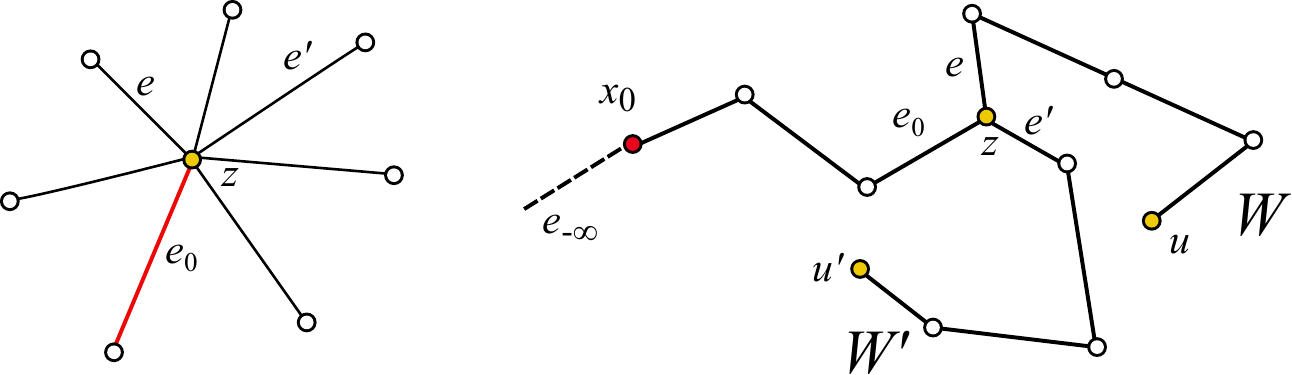}
  \end{center}
  \caption
  {Left:  For the vertex $z$ and the fixed edge $e_0$ incident with $z$,
  edge $e$ is left of $e'$ in the $(z,e_0)$-ordering. 
  Right: The paths $W$ and $W'$ are $x_0$-consistent and $W$ is $x_0$-left of $W'$.
  }
  \label{fig:ze-order}
\end{figure}
\section{The Common Part of the Proofs of our Two Main Theorems}\label{sec:common}

Let $P$ be a poset with a planar cover graph and a unique minimal element.
We will quickly develop a classification scheme that results in every incomparable
pair in $P$ being labeled as one of four types: \emph{left}, \emph{right}, 
\emph{outside}, or \emph{inside}.  We will then show that there
are linear extensions $L_1$ and $L_2$ of $P$ such that if $(a,b)\in \Inc(P)$,
and $(a,b)$ is a left pair, a right pair or an outside pair, then
$b<a$ in either $L_1$ or $L_2$.  

Now suppose our focus was on Boolean dimension.  Given a pair $(a,b)$ of distinct
points in $P$, our goal is to answer with full certainty the question:
``Is $a$ less or equal than $b$ in $P$?''.  Suppose that we know the answer as to whether
$a\leq b$ in $L_i$ for each $i\in[2]$.  If any one of these answers is negative, then
we \emph{know} that $a$ is \emph{not} less or equal than $b$ in $P$.  
Moreover, if the answer is positive for each $i\in[2]$, then we know 
that one of the following two statements hold: 
(1)~$a$ is less than $b$ in $P$;
or (2)~$(a,b)$ is an incomparable pair that is an inside pair.   
Our goal for part~2 of the
proof for Theorem~\ref{thm:bdim-13}, given in Section~\ref{sec:bdim}, will be to show that 
we can construct $11$ additional linear orders $(L_3,\dots,L_{13})$,
so that if we also know the answer to whether $a$ is less or equal than $b$ in
$L_j$ for $3,\dots,13$, then we can answer with complete confidence the
question as to whether $a$ is less or equal than $b$ in $P$.  

Now suppose that our focus was on the issue of $\dim$-boundedness for posets with
planar cover graphs and a unique minimal element.  In the second part of this
proof, given in Section~\ref{sec:dim-bounded},
we will then show that the set of all inside pairs can be covered with
$2\se(P)$ reversible sets.  

Now we begin our work in the common part of the proof by
developing a series of propositions and lemmas 
concerning posets that have a unique minimal element and a planar cover graph.
Accordingly, we continue with the same setup used in the preceding section
(and as illustrated in Figure~\ref{fig:setup}, i.e.,
we assume:
\begin{enumerate}

  \item $P$ is a poset and $x_0$ is the unique minimal element of $P$.
  \item $G$ is the cover graph of $P$ and $G$ is planar.
  \item We fix a drawing without edge crossings of $G$ in the plane with 
    $x_0$ on the exterior face.
  \item To provide a base edge among those incident in $G$ with $x_0$, we add 
    to the drawing an ``imaginary'' edge $e_{-\infty}$ in the exterior face, 
    with $e_{-\infty}$ incident with $x_0$.
\end{enumerate}
Let $u$ be an element of $P$.
The \emph{leftmost witnessing path from $x_0$ to $u$},
denoted by $\Wleft(u)$, is 
constructed using the following inductive procedure: 
If $u=x_0$ then $\Wleft(u)$ is a one-vertex path containing $u$. 
Otherwise, start with $u_0=x_0$ and $e_0=e_{-\infty}$.  Then $u_0<_P u$. 
Now suppose that for some $i\ge0$, we have defined a witnessing
path $W_i=(u_0,\dots,u_i)$, $u_i< u$ in $P$, and $e_i$ is the edge
immediately before $u_i$ on $W_i$.  Among the edges incident
with $u_i$, some may be directed towards $u_i$, but since
$u_i<_P u$, there is a non-empty set $E_i$ of edges incident
with $u_i$ that are directed away from $u_i$ and 
lie on a witnessing path from $u_i$ to $u$.  We let $e_{i+1}$ be the 
leftmost edge in $E_i$ in the $(u_i,e_i)$-ordering, and we
take $u_{i+1}$ as the other end point of $e_{i+1}$.
When $u_{i+1}=u$, the procedure halts and outputs the witnessing path
$(u_0,\dots,u_{i+1})$ as $\Wleft(u)$.  For the poset shown
in Figure~\ref{fig:setup}, we illustrate 
the leftmost path from $x_0$ to $z$ in Figure~\ref{fig:left-most-path}.

\begin{figure}[!h]
  \begin{center}
    \includegraphics[scale=.8]{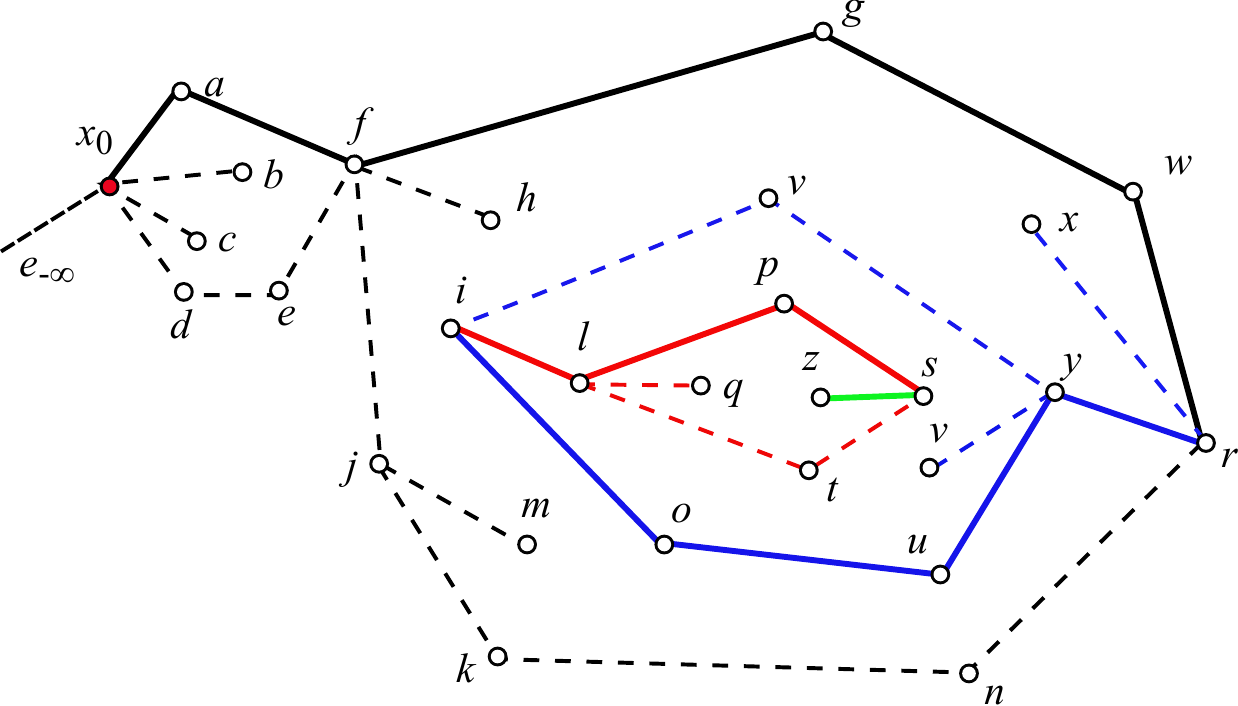}
  \end{center}
  \caption
  {We show $\Wleft(z)$, the leftmost witnessing path from $x_0$ to $z$, using
  bold edges.  The remaining edges in the cover graph are dashed. 
  In this figure, 
    the black and red edges are oriented left-to-right in the plane, 
    while the blue and green edges are oriented right-to-left.
  }
  \label{fig:left-most-path}
\end{figure}

\begin{proposition}\label{pro:Tleft-and-Tright}
  Let $u$ and $u'$ be elements of $P$. 
  Then $\Wleft(u)$ and $\Wleft(u')$ are $x_0$-consistent. 
  Also, $\Wright(u)$ and $\Wright(u')$ are $x_0$-consistent.
\end{proposition}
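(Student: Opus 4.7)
The plan is to prove the statement for $\Wleft$; the argument for $\Wright$ is symmetric, with ``leftmost'' replaced by ``rightmost'' throughout. Fix $u, u' \in P$, set $W = \Wleft(u)$ and $W' = \Wleft(u')$, and let $z$ be the last vertex of the longest common prefix of $W$ and $W'$, so that $x_0 W z = x_0 W' z$. If one of $W, W'$ is a prefix of the other, then $x_0$-consistency is immediate from the definition. Otherwise, let $e$ and $e'$ be the first edges of $zWu$ and $zW'u'$; by maximality of the common prefix, $e \ne e'$. Let $e_0$ denote the edge immediately preceding $z$ on the common prefix, with the convention $e_0 = e_{-\infty}$ when $z = x_0$.

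It remains to show that $zWu$ and $zW'u'$ share no vertex beyond $z$. Suppose for a contradiction that some $v \ne z$ lies on both. Since every witnessing path is strictly increasing in $P$, the vertices of $zWv$ satisfy $z \leq_P \cdot \leq_P v$ while the vertices of $vW'u'$ satisfy $v \leq_P \cdot \leq_P u'$, so the concatenation $zWvW'u'$ has strictly $P$-increasing labels along its vertex sequence and is therefore a witnessing path from $z$ to $u'$ beginning with the edge $e$. By the same token, $zW'vWu$ is a witnessing path from $z$ to $u$ beginning with the edge $e'$.

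I then invoke the greedy rule that defines the leftmost witnessing path at the step of the construction corresponding to $z$. In building $\Wleft(u')$, the chosen edge $e'$ is by definition the leftmost, in the $(z,e_0)$-ordering, among all edges incident to $z$ that lie on some witnessing path from $z$ to $u'$. The concatenation $zWvW'u'$ exhibits $e$ as such an edge, forcing $e' \prec e$. Symmetrically, $zW'vWu$ shows that $e'$ lies on a witnessing path from $z$ to $u$, so the greedy rule for $\Wleft(u)$ forces $e \prec e'$. These two inequalities contradict each other, ruling out the existence of $v$ and completing the proof.

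The only delicate bookkeeping I anticipate is verifying that the two concatenations are genuine witnessing paths rather than mere walks, that is, that no vertex is accidentally repeated at the junction $v$; this is handled cleanly by the strict monotonicity of witnessing paths in $P$, since every vertex of the first half is $\leq_P v$ and every vertex of the second half is $\geq_P v$. Planarity itself enters only implicitly, via the well-definedness of the clockwise $(z,e_0)$-ordering at each vertex, and the ``swap'' argument that drives the contradiction is purely combinatorial.
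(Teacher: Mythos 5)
Your proof is correct and follows essentially the same route as the paper: both identify the divergence vertex $z$, use a common vertex beyond $z$ to show that each diverging edge is a valid (available) choice in the greedy construction of the other leftmost path, and derive a contradiction with the leftmost-edge selection rule. The only cosmetic difference is that you obtain the two incompatible orderings $e\prec e'$ and $e'\prec e$ symmetrically, whereas the paper fixes one direction without loss of generality.
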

\begin{proof} 
  We use an argument by contradiction to show that $\Wleft(u)$ and $\Wleft(u')$ 
  are $x_0$-consistent. The argument for $\Wright(u)$ and $\Wright(u')$ is symmetric.
  Let $z$ be the largest element of $P$ such that $x_0\Wleft(u)z=x_0\Wleft(u')z$.  Since
  $\Wleft(u)$ and $\Wleft(u')$ are not $x_0$-consistent,  (1)~$z\not\in\{u,u'\}$;
  (2)~there are distinct elements $w$ and $w'$ both of which cover $z$ in $P$ such
  that $\Wleft(u)$ contains the edge $e=zw$ and $\Wleft(u')$ contains the edge $e'=zw'$;
  and (3)~there is an element $z'\in P$ common to $\Wleft(u)$ and $\Wleft(u')$ such
  that $\{w,w'\}<_P z'$.  Note that $z'\le_P\{u,u'\}$.

  Let $e_0$ be the last edge common to $x_0\Wleft(u)z=x_0\Wleft(u')z$ 
  (again $e_0=e_{-\infty}$ if $z=x_0$).  Without lost of generality,  we may suppose that $e$ is 
  left of $e'$ in the $(z,e_0)$-ordering.  Now consider the choice the inductive
  procedure made in constructing
  $\Wleft(u')$ made when it was at vertex $z$.  It considered all edges of
  the form $zv$ with $v$ covering $z$ in $P$ and $v\le_P u'$.  Among these
  edges, the algorithm chooses the edge which is least in the $(z,e_0)$-ordering.
  However, the edge $e=zw$ is among the choices available since $w< z'\le u'$ in $P$.  
  It follows that the procedure will not choose the edge $e'$.  The contradiction completes the
  proof.
\end{proof}
\begin{proposition}\label{pro:shortcuts} 
  Let $u$ and $v$ be elements of $P$.  Then the following statements hold:  
  \begin{enumerate} 
    \item\label{pro:item:shortcuts-left-tree} 
     If $\Wleft(u)$ is $x_0$-left of $\Wleft(v)$, then $u\not< v$ in $P$.  
    \item\label{pro:item:shortcuts-right-tree} 
      If $\Wright(u)$ is $x_0$-right of $\Wright(v)$, then $u\not< v$ in $P$.  
  \end{enumerate}
\end{proposition}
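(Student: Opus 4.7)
My plan is to prove~(i) by contradiction; part~(ii) then follows by the same argument after swapping "left" with "right" and reading the $(z,e_0)$-ordering in the reverse direction. So suppose that $u <_P v$ while $\Wleft(u)$ is $x_0$-left of $\Wleft(v)$. Being $x_0$-left presupposes that the two paths are $x_0$-consistent and that neither is a prefix of the other, so the definition supplies a well-defined last common vertex $z$, a base edge $e_0$ (the last edge common to the two paths, or $e_{-\infty}$ if $z = x_0$), and two distinct edges $e = zw$, $e' = zw'$ leaving $z$ along $\Wleft(u)$ and $\Wleft(v)$ respectively, with $e \prec e'$ in the $(z, e_0)$-ordering.

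The heart of the argument is to revisit the inductive step of the leftmost-path procedure at the moment it stands at $z$ during the construction of $\Wleft(v)$. At that step the procedure examines the set $E$ of cover edges incident to $z$, directed away from $z$, from which $v$ is still reachable by a witnessing path, and it selects the leftmost element of $E$ in the $(z, e_0)$-ordering. I would then show $e \in E$ as follows: the vertex $w$ lies on $\Wleft(u)$ strictly after $z$, so $z <_P w \leq_P u$, and the hypothesis $u <_P v$ gives $w <_P v$. Consequently, concatenating the suffix $w\Wleft(u)u$ with any witnessing path from $u$ to $v$ yields a witnessing path from $w$ to $v$, and prepending the edge $e$ exhibits $e$ as an admissible candidate at~$z$. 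But the procedure chose $e'$, which lies strictly to the right of $e$ in the $(z, e_0)$-ordering, contradicting the leftmost-selection rule that defines $\Wleft(v)$.

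The only place any real care is needed is in verifying that $e$ belongs to~$E$, i.e.\ in producing a witnessing path from $w$ to $v$ starting with $e$; this is also precisely the step where the hypothesis $u <_P v$ is used (as opposed to a weaker assumption such as $u \leq_P v$). Everything else is a direct appeal to the defining property of $\Wleft(v)$, and the analogous statement for $\Wright$ in~(ii) falls out by the mirror-image argument.
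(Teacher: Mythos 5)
Your proposal is correct and follows essentially the same argument as the paper: identify the last common vertex $z$ and the branching edges $e=zw$, $e'=zw'$, observe that $w\le_P u <_P v$ makes $e$ an admissible candidate when the procedure for $\Wleft(v)$ leaves $z$, and derive a contradiction with the leftmost-selection rule. Your extra care in exhibiting the witnessing path from $w$ to $v$ is exactly the step the paper compresses into ``the procedure had available the edge $e$.''
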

\begin{proof}
  We prove statement~\ref{pro:item:shortcuts-left-tree}.  
  The argument for statement~\ref{pro:item:shortcuts-right-tree} is symmetric.
  Let $z$ be the largest element of $P$ common to
  $\Wleft(u)$ and $\Wleft(v)$.  Also, let $e_0$ be the common edge of these two paths that
  is immediately before $z$ (let $e_0=e_{-\infty}$ if $z=x_0$).  Let $e=zw$ and $e'=zw'$ be the edges immediately after
  $z$ in $\Wleft(u)$ and $\Wleft(v)$, respectively.  Since $\Wleft(u)$ is $x_0$-left of 
  $\Wleft(v)$, we know that $e$ is left of $e'$ in the 
  $(z,e_0)$-ordering.

  Now suppose that the proposition fails with $u< v$ in $P$.  This implies 
  $w\leq u < v$ in $P$.  Therefore, in leaving $z$, the procedure for
  constructing $\Wleft(v)$ had available the edge $e$, but incorrectly chose the edge $e'$.
  The contradiction completes the proof.
\end{proof}

There are two versions of the following proposition, with the roles of left
and right interchanged.  We state and prove one of the two versions.
\begin{proposition}
\label{pro:piotrek} 
  Let $v$ and $z$ be elements of $P$ such that
  $z$ is on $\Wleft(v)$ with $z\neq v$.  Let $e^+$ and $e^-$ be the
  edges of $\Wleft(v)$ that are, respectively, immediately after and immediately
  before $z$ (if $z=x_0$ then $e^-=e_{-\infty}$).  If $z$ is covered by $u$ in $P$, and
  the edge $e=zu$ is left of $e^+$ in the $(z,e^-)$-ordering,  then 
  $\Wleft(u')$ is $x_0$-left of $\Wleft(v)$ for all $u'\in U_P[u]$.
\end{proposition}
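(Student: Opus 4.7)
The plan is to run the leftmost-witnessing-path construction for $u'$ alongside that for $v$, and show that $\Wleft(u')$ is forced to diverge strictly to the left of $\Wleft(v)$ somewhere on the initial segment $x_0\Wleft(v)z$ (possibly at $z$ itself). By Proposition~\ref{pro:Tleft-and-Tright}, $\Wleft(u')$ and $\Wleft(v)$ are $x_0$-consistent, so there is a well-defined last common vertex $z^*$ up to which the paths are identical and after which they are disjoint. The key observation is that the $\Wleft$-procedure is greedy: at each intermediate vertex it picks the leftmost available edge, i.e., the leftmost cover edge whose far endpoint is $\le_P$ the target. Since $u' \ge_P u >_P z$, every vertex on $x_0\Wleft(v)z$ is $\le_P z \le_P u'$, so every edge that $\Wleft(v)$ uses along this segment is also \emph{available} to the $\Wleft(u')$-procedure.

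I then induct along the vertices of $x_0\Wleft(v)z$ from $x_0$ toward $z$. Suppose both procedures have agreed up to a vertex $y$ on $x_0\Wleft(v)z$ with $y \ne z$. Since $y \le_P z <_P u \le_P u'$, we have $y \ne u'$, so the $\Wleft(u')$-procedure has not yet halted at $y$, and both procedures use the same reference edge (the common edge that brought them to $y$). Let $f$ be the next edge of $\Wleft(v)$ out of $y$; as $f$ is available to $\Wleft(u')$, the greedy rule forces $\Wleft(u')$ to choose at $y$ either $f$ itself (so the agreement continues) or an edge strictly left of $f$ (so the paths diverge leftward at $y$). If the agreement persists all the way to $z$, I apply the same reasoning one last time: $\Wleft(v)$ picks $e^+$ at $z$, while $e = zu$ is available to $\Wleft(u')$ (because $u \le_P u'$) and is strictly left of $e^+$ in the $(z,e^-)$-ordering by hypothesis, so $\Wleft(u')$ picks at $z$ an edge at or left of $e$, producing a leftward divergence at $z$.

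Finally, to conclude that $\Wleft(u')$ is $x_0$-left of $\Wleft(v)$, I verify that neither path is a prefix of the other, so that the ``$x_0$-left'' relation is actually defined. The last common vertex $z^*$ lies on $x_0\Wleft(v)z$, hence $z^* \le_P z$, so $z^* \ne u'$ (because $u' \ge_P u >_P z$) and $z^* \ne v$ (because $z^* \le_P z <_P v$); thus each path extends properly beyond $z^*$, with $\Wleft(u')$ taking the strictly leftward edge. The main technical care is in the inductive step, specifically in checking that every edge $\Wleft(v)$ uses along $x_0\Wleft(v)z$ is available to the $\Wleft(u')$-procedure; once this monotone-availability fact is in hand, the hypothesis on $e$ is precisely what forces the final leftward deviation at $z$ (or earlier), and the claim follows immediately.
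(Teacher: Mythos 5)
Your proof is correct and follows essentially the same route as the paper's: both arguments rest on the greedy leftmost rule together with the observation that every edge of $x_0\Wleft(v)z$ (and the edge $e=zu$ at $z$) is available to the $\Wleft(u')$-procedure because its far endpoint is $\le_P z <_P u \le_P u'$. The paper merely compresses your induction into a two-case split on whether the last common vertex of $\Wleft(u')$ and $\Wleft(v)$ lies strictly below $z$ or at/above $z$, and your extra check that neither path is a prefix of the other is a welcome explicit detail.
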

\begin{proof}
  Consider the common prefix of 
  $\Wleft(u')$ and $\Wleft(v)$. We split into two cases: 
  either the last element $w$ of their common prefix satisfies $w<z$ in $P$, or $w\geq z$ in $P$.

  Assume first that $w<z$ in $P$. 
  Let $e_w^+$ and $e_w^-$ be the edges of $\Wleft(v)$ that are, respectively, 
  immediately after and immediately before $w$ 
  (if $z=x_0$ then $e_w^-=e_{-\infty}$). 
  Consider the construction process of $\Wleft(u')$ at element $w$. 
  It looks for the least edge $e'=ww'$ in the $(w,e^-_w)$-ordering such that 
  $w < w' \leq u'$ in $P$. 
  Since $e^+_w$ is a valid choice and the two paths $\Wleft(u')$ and $\Wleft(v)$ split at $w$, the path $\Wleft(u')$ must have found a better candidate. 
  Therefore, $\Wleft(u')$ is $x_0$-left of $\Wleft(v)$, in this case.

  Now assume that the common prefix of $\Wleft(u')$ and $\Wleft(v)$ contains $z$, 
  i.e., $w\geq z$ in $P$. Again, consider the construction process of $\Wleft(u')$, 
  this time at element $z$. 
  Since $e$ is left of $e^+$ in the $(z,e^-)$-ordering and 
  $z < u \leq u'$ in $P$, 
  we conclude that there are better choices than $e^+$ for $\Wleft(u')$ to continue from $z$. 
  Therefore, the two paths $\Wleft(u')$ and $\Wleft(v)$ split at $z$ and $\Wleft(u')$ is $x_0$-left of $\Wleft(v)$, as desired.
\end{proof}
It is easy to construct examples of witnessing paths $W,W',W''$ from $x_0$ to
$u,u',u''$ such that (1)~$W$ and $W'$ are $x_0$-consistent and $W$ is $x_0$-left of $W'$;
(2)~$W'$ and $W''$ are $x_0$-consistent and $W'$ is $x_0$-left of $W''$; and
(3)~$W$ and $W''$ are not $x_0$-consistent.  However, if we know that
$W$ and $W''$ are $x_0$-consistent, we have the following more palatable result. 
\begin{proposition}\label{pro:consistent-paths-transitive}
  Let $W,W',W''$ be witnessing paths from $x_0$ to $u,u',u''$, respectively, and
  suppose that any two of these paths are $x_0$-consistent.  Then
  the following statements hold:
  \begin{enumerate}
    \item
    \label{pro:consistent-transitive-left} 
      If $W$ is $x_0$-left of $W'$, and $W'$ is $x_0$-left of $W''$, then
      $W$ is $x_0$-left of $W''$.
    \item 
    \label{pro:consistent-transitive-right} 
    If $W$ is $x_0$-right of $W'$, and $W'$ is $x_0$-right of $W''$, then
      $W$ is $x_0$-right of $W''$.
  \end{enumerate}
\end{proposition}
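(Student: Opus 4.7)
The plan is to prove statement~(i); statement~(ii) follows by the symmetric argument with left and right interchanged. Let $z$ be the last vertex common to $W$ and $W'$, and let $z'$ be the last vertex common to $W'$ and $W''$. Since both $z$ and $z'$ lie on $W'$, exactly one of three cases holds: $z$ strictly precedes $z'$ on $W'$, $z = z'$, or $z'$ strictly precedes $z$ on $W'$. I would handle each case separately, in each case identifying the vertex at which $W$ and $W''$ split and verifying that $W$'s outgoing edge lies to the left of $W''$'s.

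When $z = z'$, all three paths share the prefix $x_0 W' z$ and leave $z$ along edges $e, e', e''$ (of $W, W', W''$ respectively), all lying in the linear $(z,e_0)$-ordering, where $e_0$ is the final edge of the shared prefix. The hypothesis that $W$ is $x_0$-left of $W'$ gives $e \prec e'$, and the hypothesis that $W'$ is $x_0$-left of $W''$ gives $e' \prec e''$. Transitivity of the linear order then yields $e \prec e''$; since $W$ and $W''$ are $x_0$-consistent and differ at the edge leaving $z$, they split exactly at $z$, and $W$ is $x_0$-left of $W''$.

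When $z$ strictly precedes $z'$ on $W'$, the prefix $x_0 W' z$ is contained in all three paths: in $W$ by the definition of $z$, and in $W''$ because $W''$ shares the longer prefix $x_0 W' z'$ with $W'$. At the vertex $z$, the path $W''$ departs along the same edge $e'$ as $W'$ (since $z$ lies strictly inside the common prefix of $W''$ and $W'$), whereas $W$ departs along a different edge $e$ which, by hypothesis, is left of $e'$ in the $(z,e_0)$-ordering. Hence $W$ and $W''$ split exactly at $z$, and $W$ is $x_0$-left of $W''$. The third case, in which $z'$ strictly precedes $z$ on $W'$, is handled symmetrically, with the split now located at $z'$: the path $W$ leaves $z'$ along the same edge as $W'$, and that edge is by hypothesis left of the edge by which $W''$ departs from $W'$ at $z'$.

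I do not anticipate a serious obstacle: the proposition is essentially a transitivity statement for the left/right relation on pairwise $x_0$-consistent paths, and the three-case split above exhausts every configuration of the splitting vertices $z$ and $z'$ along $W'$. The main care required is in keeping track of which $(z,e_0)$-ordering is in effect at each split and in checking in each case that the identified vertex really is the last vertex common to $W$ and $W''$; this is immediate once one observes that at that vertex the outgoing edges of $W$ and $W''$ are distinct. No new planar input beyond the $(z,e_0)$-ordering already defined in Section~\ref{sec:preliminaries} is needed.
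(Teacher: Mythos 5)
Your proposal is correct and is essentially the paper's argument: the paper works at the single vertex $z$ common to all three paths and compares the three outgoing edges via $e_0\prec e\preceq e'\preceq e''\prec e_0$, and your three-case split on the relative positions of the two pairwise splitting vertices is just an explicit unpacking of those $\preceq$ signs. The details you flag (same $(z,e_0)$-ordering at the split, distinctness of outgoing edges forcing the split point) are handled correctly.
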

\begin{proof}
  We give the argument for statement~\ref{pro:consistent-transitive-left}.
  The argument for statement~\ref{pro:consistent-transitive-right} is symmetric.
  Let $z$ be the largest element common to all three paths $W,W',W''$ 
  (in case $z=x_0$, let $e_0=e_{-\infty}$). 
  Let $e_0$ be the edge immediately before $z$ on all three paths.   
  Then let $e,e',e''$ be the edge after $e_0$ on
  $W,W',W''$, respectively.  
  The cyclic ordering on these edges is:
  \[
    e_0\prec e\preceq e'\preceq e''\prec e_0.
  \]
  Furthermore, either $e\neq e'$ or $e'\neq e''$.  It follows that
  \[
    e_0\prec e\prec  e''\prec e_0.
  \]
  This together with the fact that $W$ and $W''$ are $x_0$-consistent 
  implies that $W$ is $x_0$-left of $W''$, as desired.
  With this observation, the proof is complete.
\end{proof}
\subsection{Four Types of Incomparable Pairs}

Let $u$ and $v$ be elements of $P$.  
\begin{itemize} 
  \item  We say $(u,v)$ is a \emph{left pair} in $P$ if\\ 
    \indent  $\Wleft(u)$ is $x_0$-left of $\Wleft(v)$ and 
    $\Wright(u)$ is $x_0$-left of $\Wright(v)$.  
  \item We say $(u,v)$ is a \emph{right pair} in $P$ if\\ 
    \indent $\Wleft(u)$ is $x_0$-right of $\Wleft(v)$ and 
    $\Wright(u)$ is $x_0$-right of $\Wright(v)$;
\end{itemize}
\begin{proposition}\label{pro:details-on-LR}
  Let $u$, $v$, $w$ be elements of $P$. Then the following statements
  hold:
  \begin{enumerate}
  \item
  \label{pro:item:left-shortcuts-inverse}
    If $u< v$ in $P$, then $(u,v)$ is not a left pair.
  \item
  \label{pro:item:right-shortcuts-inverse}
    If $u< v$ in $P$, then $(u,v)$ is not a right pair.
  \item
  \label{pro:item:comparable-not-left-right}
    If $u$ and $v$ are comparable, then $(u,v)$ is not a left pair,
    and $(u,v)$ is not a right pair.  
  \item
  \label{pro:item:left-pair-is-transitive} 
    If $(u,v)$ is a left pair, and $(v,w)$ is a left pair, then 
    $(u,w)$ is a left pair.  
  \item
  \label{pro:item:right-pair-is-transitive} 
    If $(u,v)$ is a right pair, and $(v,w)$ is a right pair, then 
    $(u,w)$ is a right pair.
  \end{enumerate}
\end{proposition}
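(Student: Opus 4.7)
The plan is to derive all five statements directly from the propositions already established in this section; no step should present a genuine obstacle. For parts~\ref{pro:item:left-shortcuts-inverse} and~\ref{pro:item:right-shortcuts-inverse}, the argument is immediate: assuming $u<v$ in $P$, if $(u,v)$ were a left pair then by definition $\Wleft(u)$ would be $x_0$-left of $\Wleft(v)$, directly contradicting Proposition~\ref{pro:shortcuts}.\ref{pro:item:shortcuts-left-tree}, and the right-pair case is handled by Proposition~\ref{pro:shortcuts}.\ref{pro:item:shortcuts-right-tree} in exactly the same way.

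For part~\ref{pro:item:comparable-not-left-right}, I would split into cases. The case $u=v$ is handled directly from the definition, since $\Wleft(u)=\Wleft(v)$ (one is a prefix of the other) and similarly for $\Wright$, so no $x_0$-left/right relation can hold. The case $u<v$ is immediate from parts~\ref{pro:item:left-shortcuts-inverse} and~\ref{pro:item:right-shortcuts-inverse}. The remaining case $v<u$ requires a separate appeal to Proposition~\ref{pro:shortcuts}: with the roles of $u$ and $v$ swapped, Proposition~\ref{pro:shortcuts}.\ref{pro:item:shortcuts-left-tree} yields that $\Wleft(v)$ is not $x_0$-left of $\Wleft(u)$, equivalently $\Wleft(u)$ is not $x_0$-right of $\Wleft(v)$, so $(u,v)$ cannot be a right pair; and the symmetric application of Proposition~\ref{pro:shortcuts}.\ref{pro:item:shortcuts-right-tree} shows that $\Wright(u)$ is not $x_0$-left of $\Wright(v)$, so $(u,v)$ cannot be a left pair.

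For parts~\ref{pro:item:left-pair-is-transitive} and~\ref{pro:item:right-pair-is-transitive}, the plan is to invoke the transitivity Proposition~\ref{pro:consistent-paths-transitive}. Assuming that $(u,v)$ and $(v,w)$ are both left pairs, Proposition~\ref{pro:Tleft-and-Tright} guarantees that the three paths $\Wleft(u)$, $\Wleft(v)$, $\Wleft(w)$ are pairwise $x_0$-consistent, and likewise for the three $\Wright$ paths. Then Proposition~\ref{pro:consistent-paths-transitive}.\ref{pro:consistent-transitive-left} applied to each of these triples yields that $\Wleft(u)$ is $x_0$-left of $\Wleft(w)$ and $\Wright(u)$ is $x_0$-left of $\Wright(w)$, so $(u,w)$ is a left pair. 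Part~\ref{pro:item:right-pair-is-transitive} follows by the symmetric argument using Proposition~\ref{pro:consistent-paths-transitive}.\ref{pro:consistent-transitive-right}. The only subtlety worth flagging is the $v<u$ case of~\ref{pro:item:comparable-not-left-right}, which is genuinely not covered by the first two parts and needs the separate invocation of Proposition~\ref{pro:shortcuts} above.
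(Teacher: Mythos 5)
Your proposal is correct and follows essentially the same route as the paper: parts (i) and (ii) from Proposition~\ref{pro:shortcuts}, part (iii) from the first two, and parts (iv) and (v) from Proposition~\ref{pro:Tleft-and-Tright} together with Proposition~\ref{pro:consistent-paths-transitive}. The $v<u$ case you flag is the same point the paper handles implicitly via the duality that $(u,v)$ is a left pair if and only if $(v,u)$ is a right pair, so your explicit re-invocation of Proposition~\ref{pro:shortcuts} is just a spelled-out version of the paper's ``immediate consequence.''
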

\begin{proof}
  Statements~\ref{pro:item:left-shortcuts-inverse} and~\ref{pro:item:right-shortcuts-inverse} follow immediately from the two statements of
  Proposition~\ref{pro:shortcuts}.
  Statement~\ref{pro:item:comparable-not-left-right} is then an immediate consequence of statements~\ref{pro:item:left-shortcuts-inverse} and~\ref{pro:item:right-shortcuts-inverse}.

  By Proposition~\ref{pro:Tleft-and-Tright} we have that any two paths from $\{\Wleft(u),\Wleft(v),\Wleft(w)\}$ are 
  $x_0$-consistent and any two paths from $\{\Wright(u),\Wright(v),\Wright(w)\}$ 
  are $x_0$-consistent.  Statements~\ref{pro:item:left-pair-is-transitive} and~\ref{pro:item:right-pair-is-transitive} then follow by applying
  Proposition~\ref{pro:consistent-paths-transitive}. 
\end{proof}

Among the incomparable pairs of $P$, we characterize those that
are neither left pairs nor right pairs as follows.
Let $(u,v)\in\Inc(P)$.
\begin{itemize} 
  \item  We say $(u,v)$ is an \emph{outside pair} in $P$ if\\ 
    \indent  $\Wleft(u)$ is $x_0$-left of $\Wleft(v)$ and 
    $\Wright(u)$ is $x_0$-right of $\Wright(v)$.  
  \item We say $(u,v)$ is an \emph{inside pair} in $P$ if\\ 
    \indent $\Wleft(u)$ is $x_0$-right of $\Wleft(v)$ and 
    $\Wright(u)$ is $x_0$-left of $\Wright(v)$;
\end{itemize}
Evidently, $(u,v)$ is an outside pair if and only if
$(v,u)$ is an inside pair.  Also, when $(u,v)\in\Inc(P)$, then
the pair $(u,v)$ is one of the four mutually exclusive types: left, right
outside, inside.  However, in contrast to the situation with left and right
pairs, it may happen that a comparable pair satisfies the requirements to be an inside pair. 

\begin{example}\label{exa:4-types}
  For the poset whose cover graph is shown in Figure~\ref{fig:setup}:
  \begin{enumerate}
    \item Among the left pairs are: $(a,e)$, $(h,m)$, $(z,x)$ and $(p,t)$.
    \item Among the right pairs are: $(c,a)$, $(d,b)$, $(k,h)$, and $(t,q)$.
    \item Among the outside pairs are: $(m,b)$, $(s,v)$ and $(z,q)$.
    \item Among the inside pairs are: $(c,g)$, $(b,x)$, $(m,p)$ and $(q,z)$.
  \end{enumerate}
\end{example}

We introduce the following notation:
\begin{itemize}
  \item $X(\mathrm{left})$ denotes the set of all left pairs of $P$. 
  \item $X(\mathrm{right})$ denotes the set of all right pairs of $P$.
  \item $X(\mathrm{outside})$ denotes the set of all incomparable pairs of $P$ that are outside pairs.
\end{itemize}
Observe that all three sets $X(\mathrm{left})$, $X(\textrm{right})$, 
$X(\textrm{outside})$ are sets of incomparable
pairs in $P$.  

\begin{proposition}\label{pro:LO-RO} 
  The following two subsets of $\Inc(P)$ are reversible: 
  \begin{enumerate}
    \item  $X_1=X(\mathrm{left})\cup X(\mathrm{outside})$.
    \item  $X_2=X(\mathrm{right})\cup X(\mathrm{outside})$.
  \end{enumerate}
\end{proposition}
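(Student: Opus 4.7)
The plan is to exhibit explicit linear extensions $L_1$ and $L_2$ of $P$ such that $L_1$ reverses every pair in $X_1$ and $L_2$ reverses every pair in $X_2$; both will be produced as depth-first traversals of the leftmost and rightmost witnessing-path trees.

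By Proposition~\ref{pro:Tleft-and-Tright}, the paths $\{\Wleft(w):w\in P\}$ are pairwise $x_0$-consistent, so their union is a tree $T_{\mathrm{left}}$ rooted at $x_0$ in which the root-to-$w$ path is exactly $\Wleft(w)$. At each node $z\in T_{\mathrm{left}}$, let $e_0$ be the edge from $z$ to its parent (take $e_0=e_{-\infty}$ when $z=x_0$), and order the children of $z$ according to the $(z,e_0)$-ordering of the cover edges from $z$ to each child. I will define $L_1$ as the preorder depth-first traversal of $T_{\mathrm{left}}$ that at every node visits the children in the \emph{reverse} of this order (``right to left''), and $L_2$ analogously from $T_{\mathrm{right}}$ but visiting the children in the chosen (``left to right'') order.

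The first step is to check that for every $(u,v)\in X_1$ we have $v<u$ in $L_1$. In both the left and the outside case, the definition gives that $\Wleft(u)$ is $x_0$-left of $\Wleft(v)$; since $u\parallel_P v$, neither path is a prefix of the other, so if $z$ is their last common vertex and $c_u$, $c_v$ are the children of $z$ lying on $\Wleft(u)$ and $\Wleft(v)$ respectively, then $c_u$ precedes $c_v$ in the $(z,e_0)$-ordering and the reversed traversal visits the subtree containing $v$ before the one containing $u$, as required.

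The main obstacle is verifying that $L_1$ is actually a linear extension of $P$. Given $u<_P v$, the paths $\Wleft(u)$ and $\Wleft(v)$ are $x_0$-consistent, and $\Wleft(v)$ cannot be a prefix of $\Wleft(u)$ since that would force $v\le_P u$. Hence either $\Wleft(u)$ is a prefix of $\Wleft(v)$, in which case $u$ is an ancestor of $v$ in $T_{\mathrm{left}}$ and the preorder puts $u<v$ in $L_1$, or the paths split and statement~\ref{pro:item:shortcuts-left-tree} of Proposition~\ref{pro:shortcuts} forces $\Wleft(u)$ to be $x_0$-right of $\Wleft(v)$; in that case $v$'s child at the split point precedes $u$'s child in the $(z,e_0)$-ordering, so the reversed traversal visits $u$'s subtree first, again giving $u<v$ in $L_1$. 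The construction and verification of $L_2$ are entirely symmetric, using $T_{\mathrm{right}}$, the un-reversed children order, and statement~\ref{pro:item:shortcuts-right-tree} of Proposition~\ref{pro:shortcuts}.
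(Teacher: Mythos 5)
Your proof is correct, but it takes a genuinely different route from the paper's. The paper argues by contradiction via the Trotter--Moore criterion: assuming a strict alternating cycle $((a_1,b_1),\dots,(a_k,b_k))$ of pairs from $X_1$, it applies Proposition~\ref{pro:shortcuts} to each comparability $a_\alpha\le_P b_{\alpha+1}$ to conclude that $\Wleft(b_{\alpha+1})$ is $x_0$-left of $\Wleft(b_\alpha)$ for every $\alpha$ cyclically, and Proposition~\ref{pro:consistent-paths-transitive} then yields the contradiction. You instead construct the reversing linear extension explicitly, as a right-to-left preorder traversal of the tree of leftmost witnessing paths, and verify the two required properties directly. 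Your order coincides exactly with the one the authors record in the remark immediately after their proof (``set $a<b$ in $L_1$ if $\Wleft(a)$ is a prefix of $\Wleft(b)$, or $\Wleft(b)$ is $x_0$-left of $\Wleft(a)$''), so in effect you supply the direct verification they deliberately omit in favour of the alternating-cycle formalism they reuse throughout the rest of the paper. What your approach buys is an explicit, constructive description of $L_1$ and $L_2$; what theirs buys is uniformity with every subsequent reversibility argument.

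One step deserves to be spelled out: that the union of the paths $\Wleft(w)$ is a tree in which the root-to-$w$ path is exactly $\Wleft(w)$. This does follow from Proposition~\ref{pro:Tleft-and-Tright}, but not quite for free: if $w$ lies on $\Wleft(w')$ and $z$ is the largest element common to $\Wleft(w)$ and $\Wleft(w')$, then $w$ cannot lie strictly beyond $z$ on $\Wleft(w')$ (that suffix meets $z\Wleft(w)w$ only in $z$, yet $w$ lies on $z\Wleft(w)w$), nor strictly before $z$ (since $w$ is the terminal vertex of $\Wleft(w)$, nothing of $\Wleft(w)$ comes after it), so $w=z$ and $x_0\Wleft(w')w=\Wleft(w)$. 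Consequently every edge of the union is the last edge of some $\Wleft(v)$, each vertex other than $x_0$ has a unique parent, and your traversal is well defined. With that observation added, the argument is complete.
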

\begin{proof}
  We prove that $X_1$ is reversible.  The argument for $X_2$ is symmetric.
  Arguing by contradiction, we assume that $k\ge2$ and 
  $((a_1,b_1),\ldots,(a_k,b_k))$ is a strict alternating cycle of pairs from 
  $X_1$.  Now let $\alpha$ be any integer in $[k]$. 
  Since $(a_\alpha,b_\alpha)\in X_1$, 
  we know that $\Wleft(a_{\alpha})$ is $x_0$-left of $\Wleft(b_{\alpha})$. 
  Since $a_{\alpha}\leq b_{\alpha+1}$ in $P$,
  Proposition~\ref{pro:shortcuts} implies that either 
  $\Wleft(a_\alpha)$ is a prefix of $\Wleft(b_{\alpha+1})$, or
  $\Wleft(a_{\alpha})$ is $x_0$-right of $\Wleft(b_{\alpha+1})$. 
  Then Proposition~\ref{pro:consistent-paths-transitive}
  implies that $\Wleft(b_{\alpha+1})$ is $x_0$-left of $\Wleft(b_\alpha)$.
  Clearly, this statement cannot hold for all $\alpha\in[k]$, cyclically.  The contradiction
  completes the proof.
\end{proof}
Readers may note that the preceding proposition asserts the \emph{existence} of
two linear extensions reversing specified sets of incomparable pairs.  In fact,
the two linear extensions $L_1$, $L_2$ are uniquely determined:
\begin{enumerate}
  \item Set $a< b$ in $L_1$ if $\Wleft(a)$ is a prefix of $\Wleft(b)$, or
    $\Wleft(b)$ is $x_0$-left of $\Wleft(a)$.
  \item Set $a< b$ in $L_2$ if $\Wright(a)$ is a prefix of $\Wright(b)$, or
    $\Wright(b)$ is $x_0$-right of $\Wright(a)$.
\end{enumerate}
Then $L_1$ is a linear extension reversing all pairs in $X(\textrm{left})\cup
X(\textrm{outside})$. Also, $L_2$ is a linear extension reversing all pairs in 
$X(\textrm{right})\cup X(\textrm{outside})$. 
We have elected to give the proof in terms of alternating cycles since this
approach will be our primary tool for working with reversible sets in the
remainder of the paper.  Independent of the proof, Proposition~\ref{pro:LO-RO} implies
that it is essential that we understand more fully
the distinctive characteristics of inside pairs.

\subsection{Shadows, Shadow Depth and Shadow Sequences}

Let $W$ and $W'$ be a pair of witnessing paths.  
We will say that a set $\cgB$ of points in the plane is the \emph{non-degenerate block
enclosed by $W$ and $W'$} if the following three conditions are met: 
(1)~there are distinct elements $x$ and $y$ of $P$ such that $x<_P y$, $x$ is not
covered by $y$, and both $W$ and $W'$ are witnessing paths from $x$ to $y$;
(2)~$W\cap W'=\{x,y\}$ so $W\cup W'$ is a cycle $D$ in the cover graph $G$; and 
(3)~$\cgB$ is the set of points in the plane that are on or inside $D$.

On the other hand,
we will say that a set $\cgB$ of points in the plane is the \emph{degenerate block
enclosed by $W$ and $W'$} if the following three conditions are met: 
(1)~there are elements $x$ and $y$ of $P$ such that $x$ is covered
by $y$ in $P$; (2)~$W=W'$ is the $2$-element witnessing path from $x$ to
$y$ consisting of a single edge; and (3)~$\cgB$ is the set of points in the plane
on the edge in $G$ between $x$ and $y$.

We note that in both cases, the set $\cgB$ and the elements $x$ and $y$ are
determined by the pair $(W,W')$, so we will simply say that $\cgB$ is the \emph{block
enclosed by $W$ and $W'$}, thereby allowing $\cgB$ to be either degenerate or
non-degenerate.   Also, we will refer to the element $x$ as $\min(\cgB)$, while
the element $y$ will be $\max(\cgB)$.  When $\cgB$ is the block enclosed by 
$W$ and $W'$, it is important to note that if $w$ is an element of $P$ on the
boundary of $\cgB$, i.e., $w\in W\cup W'$, then $w\le \max(\cgB)$ in $P$.
Soon it will become clear that if $u$ is any element of $P$ that is the interior
of $\cgB$, then $\min(\cgB)< u$ in $P$.  However, in general, we will not know the order 
relation, if any, between $u$ and $\max(\cgB)$.

Let $\cgB$ be a non-degenerate block enclosed by a pair $(W,W')$ of witnessing paths from 
$\min(\cgB)$ to $\max(\cgB)$.  We will designate one of $W$ and $W'$ to be the 
\emph{left side of $\cgB$}, while the other
path will be the \emph{right side of $\cgB$}.  The distinction is made using
the following convention.  Consider a clockwise traversal of a portion of
the boundary of $\cgB$, starting at $\min(\cgB)$ and stopping at $\max(\cgB)$.  Then the
path we have followed is the left side of $\cgB$; the other side is
the right side of $\cgB$.  The elements $\min(\cgB)$ and $\max(\cgB)$ belong to
both sides.  An element $z$ on the left side of $\cgB$, with $\min(\cgB)< z< \max(\cgB)$ in $P$
is said to be \emph{strictly} on the left side of $\cgB$.  
An element $z$ on the right side of $\cgB$, with $\min(\cgB)< z< \max(\cgB)$ in $P$ 
is said to be \emph{strictly} on the right side of $\cgB$.  

When $\cgB$ is a degenerate block consisting of
a single edge, we consider this edge to be both the left side and the right side of 
$\cgB$, and there are no elements that are strictly on one of the two sides.

For the discussion to follow, we fix an element $z \in U_P(x_0)$.  Then
we will introduce notation that defines sequences, paths, blocks, and elements of
$P$, all depending on the choice of $z$.  However, to maintain some reasonable
level in the complexity of notation, we will not indicate the dependence on $z$.
With $z$ fixed, we let $W_L=\Wleft(z)$, $W_R=\Wright(z)$.  
Then $W_L\cap W_R$ is a chain containing
$x_0$ and $z$.  Accordingly, the elements of $W_L\cap W_R$ can be listed
sequentially as $(z_0,\dots,z_m)$ such that $i<j$ if $z_i<_P z_j$.  We will refer to
$(z_0,\dots,z_m)$ as the \emph{sequence of common points of $z$}.  Note that $m\ge1$,
$z_0=x_0$, and $z_m=z$.  For all $i\in [m]$, 
we define $\cgB_i$ to be the block enclosed by 
$z_{i-1}W_Lz_i$ and $z_{i-1}W_Rz_i$. 
We also call $\cgB_i$ the block \emph{between $z_{i-1}$ and $z_i$}.
Each block in the sequence $(\cgB_1,\dots,\cgB_m)$ is either
degenerate or non-degenerate.  In general, any of the possible $2^m$ outcomes from
this binary distinction is possible.

We now make some elementary but nonetheless important observations about the
sequence $(\cgB_1,\dots,\cgB_m)$.  Suppose that $m\ge2$, and let $i$ be an integer with
$0<i< m$. 
Since $i<m$, we know $z_i\neq z$, and therefore $z_i<_P z$.
Since $w\le_P z_i$ for all elements $w$ of $P$ that are
on the boundary of $\cgB_i$, we know $z$ is not on the boundary of
$\cgB_i$.  We then have two cases:

\smallskip
\noindent
\textit{Case 1.}\quad  $z$ is in the exterior of $\cgB_i$.

In this case, it follows that if $W$ is a witnessing path from $z_i$ to $z$, all 
edges and vertices of $W$, except the starting point $z_i$ are in the exterior
of $\cgB_i$. 
We apply this reasoning twice, first when $W=z_iW_Lz$, and second
when $W=z_iW_Rz$. 
In particular, we conclude that the boundaries of $\cgB_i$ and $\cgB_{i+1}$ stay disjoint apart from $z_i$. 
Note also that $\cgB_{i+1}$ cannot contain $\cgB_i$ as 
in this case every witnessing path from
$x_0$ to $z_i$ would intersect the boundary of $\cgB_{i+1}$ but all the elements 
at the boundary of $\cgB_{i+1}$ are 
larger than $z_i$ in $P$.
We conclude that (1)~$z_i$ is the only point in the plane common to 
$\cgB_i$ and $\cgB_{i+1}$; and (2)~if $i+1<j\le m$, then $\cgB_i$ and $\cgB_j$ are disjoint.

\smallskip
\noindent
\textit{Case 2.}\quad  $z$ is in the interior of $\cgB_i$.

In this case, it follows that if $W$ is a witnessing path from $z_i$ to $z$, all 
edges and vertices of $W$, except the starting point $z_i$ are in the interior
of $\cgB_i$.  Again, we apply this reasoning twice, first when $W=z_iW_Lz$, and second
when $W=z_iW_Rz$.  Now we conclude that (1)~$z_i$ is the only point in the plane common to 
the boundaries of $\cgB_i$ and $\cgB_{i+1}$; (2)~all points in the plane belonging
to $\cgB_{i+1}$, except $z_i$, are in the interior of $\cgB_i$; and (3)~if $i+1<j\le m$, 
then all points of $\cgB_j$ are in the interior of $\cgB_i$.

When this second case holds, and $z$ is in the interior of $\cgB_i$, we will call the
element $z_i$ a \emph{reversing element} for $z$.  The 
number of reversing elements in the sequence of common elements
of $z$ will be called the \emph{shadow depth of $z$}, denoted $\sd(z)$.

Next, when $\sd(z)=r$, we define a sequence $(\shad_0(z),\dots,\shad_r(z))$
of sets called the \emph{shadow sequence of $z$}.
First, let $(i_1,\dots,i_r)$ be the subsequence of $(1,\dots,m-1)$ determined by
the subscripts of the reversing elements of $z$.
We expand this sequence by
adding $i_0=0$ at the beginning, and then adding $i_{r+1}=m$ at the end.
For each $j$ with $0\le j\le r$, we then set
\[
  \shad_j(z) = \bigcup_{\ \ \, i_{j}\, <\, i\, \le\, i_{j+1}} \cgB_i.
\] 
Also, we refer to $(\cgB_{i_{j}+1},\ldots, \cgB_{i_{j+1}})$ as the \emph{sequence of blocks} of 
the $j$-shadow of $z$.
We call $\cgB_{i_{j+1}}$ the \emph{terminal block} of $\shad_j(z)$.
Also, we call the element
call $z_{i_{j}}$ the \emph{base element} of $\shad_j(z)$,
and we call $z_{i_{j+1}}$ the \emph{terminal element} of $\shad_j(z)$.  These
definitions are illustrated in Figure~\ref{fig:shadows}.  Note that this
is our familiar poset from Figure~\ref{fig:setup}, with certain elements relabeled.

\begin{figure}[!h]
  \begin{center}
    \includegraphics[scale=.8]{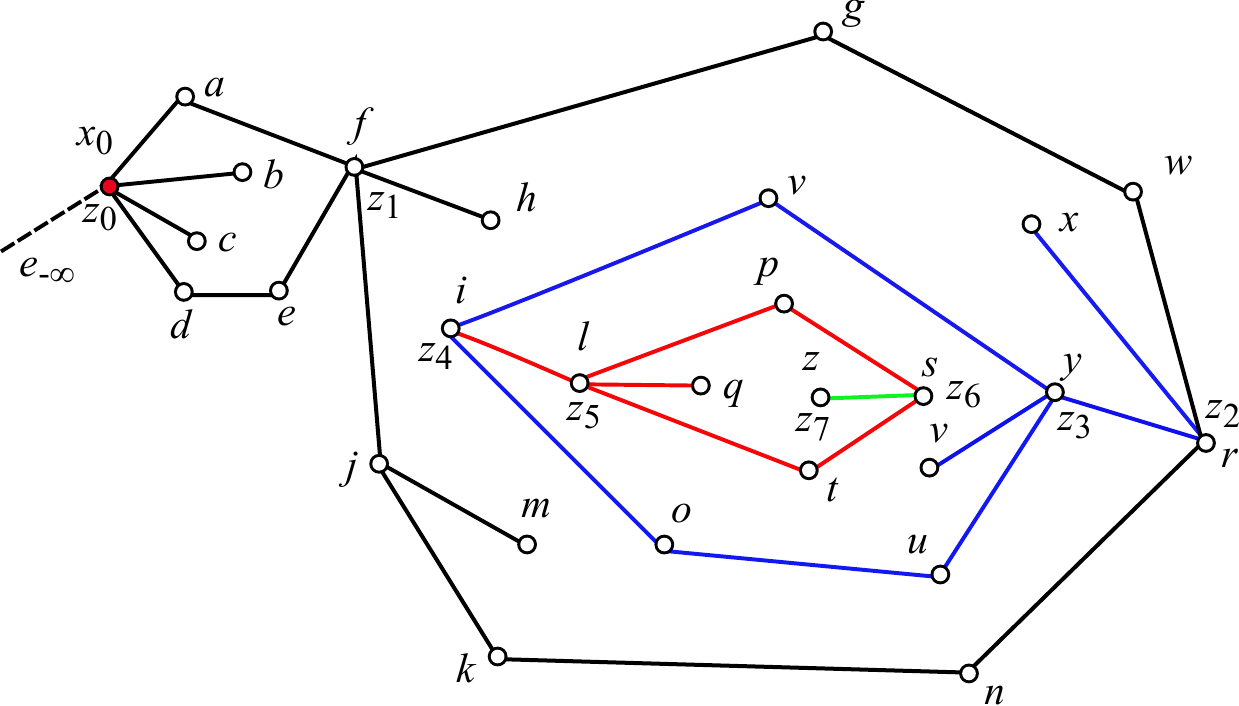}
  \end{center}
  \caption{
    The sequence of common points of $z=z_7$ is $(z_0,\dots,z_7)$.  
    The shadow depth of $z$ is $3$, and its reversing elements are $z_2$, $z_4$, 
    and $z_6$.  Shadow blocks $\cgB_3,\cgB_5,\cgB_7$ are degenerate, while 
    $\cgB_1,\cgB_2,\cgB_4,\cgB_6$ are non-degenerate.  
    In this figure, 
    the black and red edges are oriented left-to-right in the plane, 
    while the blue and green edges are oriented right-to-left.
  } 
  \label{fig:shadows}
\end{figure}

We have already made some basic observations
concerning shadows, shadow sequences and shadow depth.  Now we give a series
of four additional propositions concerning these concepts.  The first of these includes
only some nearly self-evident statements listed explicitly for emphasis, so
no additional arguments are given.  However, three more substantive propositions follow.
\begin{proposition}\label{pro:shadows-basic-props} 
  Let $z\in U_P(x_0)$, let $(z_0,\ldots,z_m)$ be the sequence of common points of $z$.
  If $i\in[m]$, then the sequence of common elements of $z_i$ is $(z_0,\dots,z_i)$.
  Furthermore, if $j$ is an integer with $0\le j\le\sd(z)$, and $\cgB$ is a block
  of $\shad_j(z)$.  Then the following statements hold:
  \begin{enumerate} 
    \item If $x$ is the base element of $\shad_j(z)$, then $\sd(x)=j-1$, unless
      $j=0$ and $x=x_0$.
    \item $\shad_j(\max(\cgB))\subseteq\shad_j(z)$, with equality holding if and only 
      if $\cgB$ is the terminal block of $\shad_j(z)$.  
  \end{enumerate}
\end{proposition}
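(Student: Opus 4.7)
The plan is to first establish the unnumbered preliminary claim that $(z_0,\ldots,z_i)$ is the sequence of common points of $z_i$. This amounts to showing that $\Wleft(z_i)$ is the prefix of $\Wleft(z)$ ending at $z_i$, and symmetrically that $\Wright(z_i)$ is the prefix of $\Wright(z)$ ending at $z_i$. I would argue by induction along the greedy construction: at each vertex $z_k$ with $k<i$, any edge on which the leftmost construction could continue a witnessing path to $z_i$ is also available for reaching $z$ (since $z_i<z$ in $P$), and the edge chosen by $\Wleft(z)$ at $z_k$ continues along $\Wleft(z)$ through $z_i$ and hence is available for reaching $z_i$. The leftmost-choice rule then forces the two constructions to agree at $z_k$, and iterating gives the claim. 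In particular, for every $i\in[m]$, the blocks $\cgB_1,\ldots,\cgB_i$ built from $z_i$ coincide with the first $i$ blocks built from $z$.

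Next, for both (1) and (2), the crucial observation is that the reversing-element data of $z_l$ (for any $0<l\le m$) is inherited from $z$: for each $k<l$, the element $z_k$ is a reversing element for $z_l$ if and only if it is a reversing element for $z$. The forward direction is immediate from the Case~2 observation in the discussion preceding the proposition, applied to $z$: if $z$ lies in the interior of $\cgB_k$, then every block $\cgB_j$ with $k<j\le m$ lies in the interior of $\cgB_k$, and in particular so does $z_l$. For the reverse direction I would use a short Jordan-curve argument: the boundary elements of $\cgB_k$ all satisfy $w\le z_k$ in $P$, while the vertices of the blocks $\cgB_{l+1},\ldots,\cgB_m$ all satisfy $w\ge z_l>z_k$, so these later blocks cannot meet the boundary of $\cgB_k$; since they form a connected region attached to $z_l$, which sits in the interior of $\cgB_k$, they lie entirely in the interior, and thus so does $z$.

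With these two facts in hand, both statements follow by unpacking definitions. For (1), setting $l=i_j$ gives that the reversing elements for $z_{i_j}$ are exactly $z_{i_1},\ldots,z_{i_{j-1}}$, hence $\sd(z_{i_j})=j-1$, with the case $j=0$, $x=x_0$ excluded since $x_0\notin U_P(x_0)$. For (2), with $\cgB=\cgB_l$ and $i_j<l\le i_{j+1}$, the same analysis gives $\sd(z_l)=j$ and identifies $\shad_j(z_l)=\bigcup_{i_j<i\le l}\cgB_i$, which is contained in $\shad_j(z)=\bigcup_{i_j<i\le i_{j+1}}\cgB_i$, with equality precisely when $l=i_{j+1}$, i.e.\ when $\cgB$ is the terminal block of $\shad_j(z)$. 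I expect the main obstacle to be making the reverse direction of the middle paragraph precise — the combination of comparability of boundary vertices with a connectedness argument — while keeping the exposition at the "self-evident" level the authors suggest.
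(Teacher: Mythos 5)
Your argument is correct. The paper gives no proof of this proposition at all --- it declares the statements ``nearly self-evident'' and explicitly says no additional arguments are given --- and what you supply is exactly the intended justification: the greedy leftmost/rightmost rule forces $\Wleft(z_i)$ and $\Wright(z_i)$ to be the prefixes $x_0\Wleft(z)z_i$ and $x_0\Wright(z)z_i$ (the chosen edge lies in the smaller candidate set and is leftmost in the larger one), and the Case~1/Case~2 observations preceding the proposition, together with the Jordan-curve argument comparing boundary elements ($w\le_P z_k$) with the witnessing paths from $z_l$ to $z$ (all elements $\ge_P z_l>_P z_k$), show that reversing elements are inherited in both directions, after which (i) and (ii) are bookkeeping.
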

Let $\cgB$ be a block.  We will refer to $\cgB$ as a \emph{shadow
block} when there is an element $z\in U_P(x_0)$ such that if $(z_0,\dots,z_m)$
is the sequence of common points of $z$, then there is some $i\in[m]$ such
that $\cgB$ is the block between $z_{i-1}$ and $z_i$.
\begin{proposition}\label{pro:uWLWR}
  Let $\cgB$ be a shadow block. If $u$ is an element of $P$ that belongs to
  $\cgB$, then $\min(\cgB)$ belongs to both $\Wleft(u)$ and $\Wright(u)$.
\end{proposition}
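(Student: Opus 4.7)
The plan is to show that for any element $u$ belonging to $\cgB$, the element $\min(\cgB)$ lies on both $\Wleft(u)$ and $\Wright(u)$. Fix $z \in U_P(x_0)$ and $i$ so that $\cgB$ is the block between $z_{i-1}$ and $z_i$ in the sequence of common points of $z$, and set $W_L = \Wleft(z)$, $W_R = \Wright(z)$, so that $\min(\cgB) = z_{i-1}$. I would first dispose of the degenerate situations. If $u = \min(\cgB) = z_{i-1}$, then $u$ is the starting vertex of both witnessing paths. If $u = \max(\cgB) = z_i$, then Proposition~\ref{pro:shadows-basic-props} gives the sequence of common points of $z_i$ as $(z_0,\ldots,z_i)$, so $z_{i-1}$ is a common point of $z_i$ and thus lies on both $\Wleft(z_i)$ and $\Wright(z_i)$. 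And if $i = 1$, then $z_{i-1} = x_0$ starts every witnessing path. In what follows I assume $i \ge 2$ and that $u$ lies either in the interior of $\cgB$ or strictly on one of its two sides.

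I would prove the stronger statement that $x_0 W_L z_{i-1}$ is a prefix of $\Wleft(u)$; the analogous assertion about $x_0 W_R z_{i-1}$ and $\Wright(u)$ then follows by the same argument with the roles of left and right swapped. Write $x_0 W_L z_{i-1} = (y_0, y_1, \ldots, y_\kappa)$ with $y_0 = x_0$ and $y_\kappa = z_{i-1}$, and let $e^-_j$ denote the edge preceding $y_j$ on $W_L$ (with the convention $e^-_0 = e_{-\infty}$). I induct on $j$ from $0$ to $\kappa-1$, showing that once $\Wleft(u)$ has reached $y_j$ along this prefix, its next chosen edge is $y_j y_{j+1}$. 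This edge is a valid candidate for $\Wleft(u)$ at $y_j$ because $y_{j+1} \le z_{i-1}$ along $W_L$ and $z_{i-1} \le u$ in $P$; the latter inequality holds since any witnessing path from $x_0$ to $u$ must enter the plane region $\cgB$ through some boundary vertex, and every boundary vertex lies on a witnessing path from $z_{i-1}$ to $z_i$, hence is $\ge z_{i-1}$ in $P$.

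The main obstacle is to show that no competing edge $y_j v$ is strictly left of $y_j y_{j+1}$ in the $(y_j, e^-_j)$-ordering while satisfying $v \le u$ in $P$. Suppose for contradiction that such a $v$ exists. Since the construction of $W_L = \Wleft(z)$ chose $y_j y_{j+1}$ as the leftmost edge leading to a vertex $\le z$, we must have $v \not\le z$. I would then use planarity to locate $v$: if $v$ lay on the boundary of $\cgB$, then $v \le z_i \le z$, a contradiction; if $v$ lay in the interior of $\cgB$, then the edge $y_j v$ would join the exterior vertex $y_j$ to an interior vertex and thus have to cross the boundary cycle of $\cgB$, impossible in a drawing without edge crossings. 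Hence $v$ lies in the exterior of $\cgB$. Now consider any witnessing path from $v$ to $u$: since $u \in \cgB$ but $v$ is outside, some vertex $w$ of this path lies on the boundary of $\cgB$, and then $v \le w$ while $w \le z_i \le z$, yielding $v \le z$, the desired contradiction. The induction completes the proof that $z_{i-1}$ lies on $\Wleft(u)$, and the symmetric argument using $W_R$ places $z_{i-1}$ on $\Wright(u)$.
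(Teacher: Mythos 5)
Your proof is correct, but it follows a genuinely different route from the paper's. The paper looks at the \emph{first} vertex $w$ at which $\Wleft(u)$ meets $\cgB$, notes $w$ must be on the boundary, and rules out $w$ being strictly on the right side by comparing $\Wleft(w)$ with $\Wleft(x_\cgB)$ at their branch point and contradicting the leftmost-ness of $\Wleft(y_\cgB)$ (using that the left side of a shadow block is a suffix of $\Wleft(y_\cgB)$, together with Propositions~\ref{pro:Tleft-and-Tright} and~\ref{pro:shortcuts}); if $w$ is on the left side, $\Wleft(w)$ is a prefix of $\Wleft(y_\cgB)$ and hence contains $x_\cgB$. You instead run a greedy induction from the bottom: you show edge by edge that the construction of $\Wleft(u)$ cannot deviate from the common prefix $x_0\Wleft(z)z_{i-1}$, because a leftward deviation to a cover $v\le_P u$ would force $v\le_P \max(\cgB)\le_P z$ (via the Jordan-curve argument that anything below an element of $\cgB$ and outside $\cgB$ must pass through the boundary), contradicting the choice made by $\Wleft(z)$. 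Your version is more elementary and self-contained (it does not invoke the consistency or shortcut propositions) and it establishes the slightly stronger conclusion that all of $x_0\Wleft(z)\min(\cgB)$ is a prefix of $\Wleft(u)$; the paper's version is shorter because it reuses earlier machinery. Two small points you leave implicit but should note: the vertices $y_j$ with $j<\kappa$ really do lie in the exterior of $\cgB$ (this follows because $y_j<_P z_{i-1}$ while every boundary element of $\cgB$ is $\ge_P z_{i-1}$, and an interior element would force a boundary element below $y_j$), and the interior-$v$ case only arises for non-degenerate blocks, where the boundary is indeed a cycle.
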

\begin{proof}
  To simplify the notation for the argument, we set 
  $x=\min(\cgB)$, and $y=\max(\cgB)$.  Our goal is to show that $x$ belongs to
  both $\Wleft(u)$ and $\Wright(u)$.  We provide full details to show
  that $x$ belongs to $\Wleft(u)$.  The argument for $\Wright(u)$ is symmetric.

  Since $x_0$ is in the exterior face and $u$ is in $\cgB$, 
  the path $\Wleft(u)$ intersects the boundary of $\cgB$. 
  Let $w$ be the least element of $P$ that is on $\Wleft(u)$ and is in
  $\cgB$. In particular, $\Wleft(w)$ is a prefix of $\Wleft(u)$ and
  $w$ is on the boundary of $\cgB$.
  If $w$ is on the left side of 
  $\cgB$, i.e.\ $w\in x\Wleft(y)y$, then $\Wleft(w)$ is a prefix of $\Wleft(y)$. 
  Thus $x\in\Wleft(u)$, as desired.

  We may therefore assume that $w$ is strictly on the right side of $\cgB$ and 
  argue for contradiction. 
  In particular, we have $x<w< y$ in $P$.

  Now consider the $x_0$-consistent paths $\Wleft(x)$ and $\Wleft(w)$.
  The comparability $x< w$ in $P$ and 
  Proposition~\ref{pro:shortcuts} imply that $\Wleft(w)$ is
  $x_0$-left of $\Wleft(x)$.  
  Let $v$ be the largest element of
  $P$ common to $\Wleft(w)$ and $\Wleft(x)$. 
  Let $e_0$ be the edge common to $\Wleft(w)$ and $\Wleft(x)$ that is immediately before
  $v$, if $v=x_0$ then $e_0=e_{-\infty}$. 
  Then let $e_w=vw'$ be the edge on $\Wleft(w)$ that is immediately after
  $v$, and let $e_x=vx'$ be the edge of $\Wleft(x)$ that is immediately after $v$.
  Then $e_w$ is left of $e_x$ in the $(v,e_0)$-ordering.

  Recall that since $\cgB$ is a shadow block
  $\Wleft(x)$ is a prefix of $\Wleft(y)$. 
  Therefore, $v < x' \leq x < y$ in $P$ and all these elements lie on $\Wleft(y)$. 
  However, $v < w < y$ in $P$ as well and therefore 
  the construction procedure of $\Wleft(y)$ after reaching $v$ should prefer $e_w$ over $e_x$. 
  This contradicts the assumption that $w$ is strictly on the right side of $\cgB$. 
\end{proof}
\begin{proposition}\label{pro:path-in-block}
  Let $\cgB$ be a shadow block.  If $W$ is a witnessing path in $P$,
  and both the starting and the ending point of $W$ are in $\cgB$,
  then all edges of $W$ are in $\cgB$. 
\end{proposition}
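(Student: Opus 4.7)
My plan is to argue by contradiction. The degenerate case is immediate: $\cgB$ is then a single cover edge $xy$ with no intermediate element in $P$, so the only witnessing path between endpoints in $\cgB$ is either trivial or that edge itself.

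For $\cgB$ non-degenerate with boundary cycle $D$, suppose $W=(u_0,\ldots,u_k)$ has some edge outside $\cgB$. Let $i$ be minimal with $u_i\notin\cgB$; planarity forces the exit vertex $u_{i-1}$ to lie on $D$. Let $j>i$ be minimal with $u_j\in\cgB$, which exists since $u_k\in\cgB$; likewise $u_j\in D$. Strict monotonicity of $W$ gives $u_{i-1}<_P u_j\le_P y=\max(\cgB)$, in particular $u_{i-1}\neq y$. The pivotal observation is that $u_i\le_P u_j\le_P y$, so $u_{i-1}u_i$ lies on a witnessing path from $u_{i-1}$ to $y$ (concatenate $u_{i-1}Wu_j$ with a witnessing subpath of $D$ from $u_j$ to $y$); this makes $u_{i-1}u_i$ a legitimate candidate during the greedy construction of $\Wleft(y)$ or $\Wright(y)$ at $u_{i-1}$.

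Case (i): $u_{i-1}$ is strictly on the left side. Then $u_{i-1}\in\Wleft(z)$, and a short greedy argument (minimizing over candidate edges at each step, noting that the candidate set for $\Wleft(y)$ is contained in that for $\Wleft(z)$ yet contains the edge chosen by $\Wleft(z)$) shows $\Wleft(y)$ is the prefix of $\Wleft(z)$ ending at $y$. In particular $\Wleft(y)$ selects the next edge $e^+$ of $\Wleft(z)$ at $u_{i-1}$. Since $u_i\notin\cgB$ and the local exterior at a strict-left-side vertex is the half-plane left of $\Wleft(z)$, the edge $u_{i-1}u_i$ is left of $e^+$ in the $(u_{i-1},e^-)$-ordering, contradicting the leftmostness of $\Wleft(y)$. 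Case (ii): $u_{i-1}$ strictly on the right side is symmetric via $\Wright(y)$.

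Case (iii): $u_{i-1}=x$, so $u_0=x$ and $i=1$. One inspects the wedges at $x$ bounded by $e^-_L,e^+_L,e^+_R,e^-_R$, the boundary edges at $x$ of the two adjacent shadow blocks (substitute $e_{-\infty}$ for $e^-_L,e^-_R$ when $x=x_0$). If $xu_1$ lies in a wedge that is left of $e^+_L$ in the $(x,e^-_L)$-ordering or right of $e^+_R$ in the $(x,e^-_R)$-ordering, the same leftmost/rightmost argument as in (i)/(ii) applies. Otherwise, $xu_1$ enters a wedge corresponding either to the interior of the adjacent shadow block $\cgB_{i-1}$ (when $\cgB\cap\cgB_{i-1}=\{x\}$, the non-reversing case) or to the exterior of $\cgB_{i-1}$ (when $\cgB\subseteq\cgB_{i-1}$, the reversing case); this subcase is vacuous when $x=x_0$. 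In either scenario $u_1$ and $u_k$ lie on opposite sides of the boundary of $\cgB_{i-1}$. But every vertex on that boundary is $\le_P x=\max(\cgB_{i-1})$, whereas $u_\ell\ge_P u_1>_P x$ for all $\ell\ge 1$, so $W$ never visits the boundary of $\cgB_{i-1}$ after $u_0$; by planarity it therefore cannot cross it, a contradiction.

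The main obstacle will be this last sub-case, where the direct extremality argument at $x$ fails because $xu_1$ enters an ``unexpected'' wedge; resolving it requires marrying the planar structure of adjacent shadow blocks with the strict monotonicity of $W$ against the max-bound $x$ on the boundary of $\cgB_{i-1}$.
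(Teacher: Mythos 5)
Your proof is correct and takes essentially the same approach as the paper's: reduce to a subpath that exits and re-enters $\cgB$ through boundary vertices, eliminate the strictly-left/strictly-right exit vertex by the extremality of $\Wleft(y)$ and $\Wright(y)$, and handle the exit at $x=\min(\cgB)$ via the adjacent block $\cgB_{m-1}$. The only cosmetic difference is that the paper constrains the first edge of $W$ a priori to the interior/exterior wedge of $\cgB_{m-1}$ (according to whether $x$ is a reversing element) before applying extremality, whereas you let the edge enter the residual wedge and then observe that $W$ could never cross $\partial\cgB_{m-1}$ because its boundary elements are $\le_P x$ --- both rest on the same underlying fact.
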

\begin{proof}
  The proposition holds trivially if $\cgB$ is degenerate, so we will
  assume that $\cgB$ is non-degenerate.  Then we 
  argue by contradiction.  Among all witnessing paths for which the
  proposition fails, 
  we take $W$ as one of minimum length.  Let $u$ and $v$ be the starting and
  ending points, respectively, of $W$.  Then $u$ and $v$ are on the boundary of
  $\cgB$, while all edges of $W$, and any vertices of $W$ that
  are between $u$ and $v$ are in the exterior of $\cgB$.
  The notation for the remainder of the argument is simplified by
  setting $x=\min(\cgB)$, and $y=\max(\cgB)$.
  Then $x\le u< v\le y$ in $P$.  So $u\neq y$ and $v\neq x$.
  Let $N$ be the uniquely determined (and possibly trivial) witnessing
  path from $v$ to $y$ that is a suffix of one of the two sides of
  $\cgB$.  Let $e$ be the first edge of $W$.
  
  Since $\cgB$ is a shadow block, the left side of $\cgB$ is contained in 
  $\Wleft(y)$ and the right side of $\cgB$ is contained in $\Wright(y)$. 
  If $u$ is on the left side of $\cgB$, let
  $e^+_L$ and $e^-_L$ be, respectively, the edges immediately after and
  immediately before $u$ on the path $\Wleft(y)$. 
  Of course, in the case $u=x_0$ we set $e^-_L=e_{-\infty}$. 
  Also, if $u$ is on the
  right side of $\cgB$, let $e^+_R$ and
  $e^-_R$ be, respectively, the edges immediately after and immediately
  before $u$ on $\Wright(y)$. 
  Again, if $u=x_0$ we set $e^-_R=e_{-\infty}$. 
  Note that if $u=x$, then $u$ is on both
  sides of $\cgB$. 

  If $u$ is strictly on the left side of $\cgB$, then the fact that
  $e$ is in the exterior of $\cgB$ implies that $e$ is left of $e^+_L$ in
  the $(u,e^-_L)$-ordering.  Let $N'$ be the concatenation of 
  $\Wleft(u)$, $W$ and $N$.  Then $N'$ is a witnessing path from
  $x_0$ to $y$ contradicting the fact that
  $\Wleft(y)$ is leftmost.  We conclude that $u$ cannot be
  strictly on the left side of $\cgB$. A symmetric argument shows
  that $u$ cannnot be strictly on the right side of $\cgB$.  We conclude
  that $u=x$.
  
  We now identify two cases according to the cyclic ordering on 
  the edges $e^+_L,e^-_L,e^+_R,e^-_R$. 

  \smallskip
  \textit{Case 1.}\quad $e^-_L\prec e^+_L\prec e^+_R\prec e^-_R \preceq e^-_L$.

  We note that this case holds when $x=x_0$. 
  Now we claim that $e^-_L\prec e\prec e^-_R$. 
  This claim holds trivially if $x=x_0$. 
  Now suppose $x\neq x_0$ and let $(z_0,\ldots,z_m)$ be the sequence 
  of common points of $y$. 
  The assumption in Case 1 holds when $m>0$ and
  $x=z_{m-1}$ is \emph{not} a reversing element of $y$.  
  Then $y$ is in the exterior of $\cgB_{m-1}$---the block between $z_{m-2}$ and $z_{m-1}$. 
  As noted previously,
  this requires that all vertices and edges of witnessing paths from $x$ to
  $y$, except the vertex $x$, are in the exterior of $\cgB_{m-1}$.  
  This requires the cyclic ordering $e^-_L\prec e \prec e^-_R$, as desired.

  Since $e$ is not in $\cgB$, it follows that either $e^-_L\prec e\prec e^+_L$,
  or $e^+_R\prec e\prec e^-_R$.  If the first option holds, then
  the concatenation of $\Wleft(x)$, $W$ and $N$ contradicts $\Wleft(y)$ being
  leftmost.  If the second option holds, then the concatenation of $\Wright(x)$,
  $W$ and $N$ contradicts $\Wright(y)$ being rightmost.  With these observations,
  the proof of Case~1 is complete.

  \smallskip
  \textit{Case 2.}\quad $e^-_L\prec e^-_R\prec e^+_L\prec e^+_R\prec e^-_L$.

  We note that in this case $x\neq x_0$. 
  Let $(z_0,\ldots,z_m)$ be the sequence 
  of common points of $y$. 
  We have $m\ge2$, and $x=z_{m-1}$ is a reversing
  element of $y$. Thus, $y$ is in the interior of $\cgB_{m-1}$.
  Also, all edges and vertices of witnessing paths from $x$ to $y$, 
  except $x$, are
  in the interior of $\cgB_{m-1}$. 
  It follows that we have the cyclic ordering
  $e^-_R\prec e\prec e^-_L$.  Since $e$ is not in $\cgB$, it follows that either
  $e^-_R\prec e\prec e^+_L$ or $e^+_R\prec e\prec e^-_L$.  If the first
  option holds, then the concatenation of $\Wleft(x)$, $W$ and $N$ contradicts
  $\Wleft(y)$ being leftmost.  If the second option holds, then
  the concatenation of $\Wright(x)$, $W$ and $N$ contradicts $\Wright(y)$ being
  rightmost.  Since the two cases are exhaustive, the proof of the proposition is
  complete.
\end{proof}

\begin{proposition}\label{pro:shadow-comp}
  Let $z\in U_P(x_0)$, let $j$ be an integer with $0\le j\le\sd(z)$, let $\cgB$ be a block
  of $\shad_j(z)$, let $x=\min(\cgB)$ and $y=\max(\cgB)$, and let $u$ be an element of 
  $P$ that belongs to $\cgB$ but is not the base element of $\shad_j(z)$.  
  Then the following statements hold:
  \begin{enumerate}
    \item
    \label{pro:item:depth}
      $\sd(u)\ge j$.  
    \item
    \label{pro:item:depth-with-y}
      $\shad_j(u)\subseteq \shad_j(y)$, with equality holding if
      and only if $u\ge_P y$.
  \end{enumerate}
\end{proposition}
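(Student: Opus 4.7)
The plan is to parametrize everything by the indexing of $z$'s common points. Let $(z_0,\ldots,z_m)$ be the sequence of common points of $z$, let $0=i_0<i_1<\cdots<i_r<i_{r+1}=m$ with $r=\sd(z)$ index its reversing elements, and write $\cgB=\cgB_k$ for some $i_j<k\le i_{j+1}$, so that $\min(\cgB)=z_{k-1}$ and $\max(\cgB)=y=z_k$. By Proposition~\ref{pro:uWLWR}, $z_{k-1}$ lies on both $\Wleft(u)$ and $\Wright(u)$; combined with the $x_0$-consistency of $\Wleft(z_{k-1})$ and $\Wleft(u)$ (Proposition~\ref{pro:Tleft-and-Tright}) and the fact that the sequence of common points of $z_{k-1}$ is $(z_0,\ldots,z_{k-1})$ (Proposition~\ref{pro:shadows-basic-props}), this yields that $(z_0,\ldots,z_{k-1})$ occurs as an initial segment of $u$'s sequence of common points, with the corresponding initial blocks being exactly $\cgB_1,\ldots,\cgB_{k-1}$. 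For statement~\ref{pro:item:depth}, recall from the Case~2 analysis preceding Proposition~\ref{pro:shadows-basic-props} that if $z_{i_\ell}$ is a reversing element of $z$, then $\cgB_{i_\ell+1}\cup\cdots\cup\cgB_m\subseteq\mathrm{int}(\cgB_{i_\ell})\cup\{z_{i_\ell}\}$. Since $u\in\cgB_k$ and $u\ne z_{i_j}\ge z_{i_\ell}$ for each $\ell\le j$, we obtain $u\in\mathrm{int}(\cgB_{i_\ell})$, so $z_{i_\ell}$ is a reversing element of $u$ for $\ell=1,\ldots,j$; thus $\sd(u)\ge j$.

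For the inclusion in statement~\ref{pro:item:depth-with-y}, the above argument also shows that the first $j$ reversing elements of $u$ are exactly $z_{i_1},\ldots,z_{i_j}$, so $\shad_j(u)$ begins right after $z_{i_j}$. Its constituent blocks split into (a)~blocks $\cgB_{i_j+1},\ldots,\cgB_{k-1}$ shared with $z$, and (b)~blocks of $u$ occurring after $z_{k-1}$ in $u$'s sequence (up to the $(j+1)$-th reversing element of $u$, or up to $u$ itself when $\sd(u)=j$). Applying Proposition~\ref{pro:path-in-block} to $z_{k-1}\Wleft(u)u$ and $z_{k-1}\Wright(u)u$, whose endpoints lie in $\cgB_k$, shows both segments are contained in $\cgB_k$; hence every block of type~(b) is a sub-region of $\cgB_k$. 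Therefore $\shad_j(u)\subseteq\cgB_{i_j+1}\cup\cdots\cup\cgB_k=\shad_j(y)$. For the forward direction of the equality characterization, suppose $u\ge y=z_k$; I claim $z_k\in\Wleft(u)$. The paths $\Wleft(u)$ and $\Wleft(z_k)$ are $x_0$-consistent (Proposition~\ref{pro:Tleft-and-Tright}), so one of four alternatives holds; the cases where $\Wleft(z_k)$ is $x_0$-left of $\Wleft(u)$ or where $\Wleft(u)$ is a proper prefix of $\Wleft(z_k)$ both force $z_k\not\le u$ via Proposition~\ref{pro:shortcuts}, contradicting $u\ge z_k$. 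The remaining case, $\Wleft(u)$ being $x_0$-left of $\Wleft(z_k)$, places the split vertex $v$ at or above $z_{k-1}$ on the left side of $\cgB_k$, with the next edge of $\Wleft(u)$ departing into the exterior of $\cgB_k$, contradicting Proposition~\ref{pro:path-in-block} applied to the segment $z_{k-1}\Wleft(u)u\subseteq\cgB_k$. Hence $\Wleft(z_k)$ is a prefix of $\Wleft(u)$, so $z_k\in\Wleft(u)$, and symmetrically $z_k\in\Wright(u)$. Consequently the block of $u$ between $z_{k-1}$ and $z_k$ is exactly $\cgB_k$, and whether $u=z_k$ or $u\in\mathrm{int}(\cgB_k)$ with $z_k$ acting as the $(j+1)$-th reversing element of $u$, $\cgB_k$ is the terminal block of $\shad_j(u)$; thus $\shad_j(u)=\shad_j(y)$.

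Conversely, if $u\not\ge y$, then $z_k\not\le u$, so $z_k$ lies on neither $\Wleft(u)$ nor $\Wright(u)$ and therefore is not on the boundary of any block of $u$ past $z_{k-1}$. Because each such block is a sub-region of $\cgB_k$ while $z_k$ lies on the boundary (not the interior) of $\cgB_k$, $z_k$ cannot be in the interior of any such sub-block either. Hence $z_k\in\shad_j(y)\setminus\shad_j(u)$, giving strict inclusion. The principal obstacle is the ``left-exit'' step in the $(\Leftarrow)$ direction: one must unravel the planar semantics of the $(v,e_0)$-ordering to certify that an edge strictly left of the first edge of the left side of $\cgB_k$ at $v$ enters the exterior of $\cgB_k$, so that Proposition~\ref{pro:path-in-block} delivers the contradiction; one must also take care to identify $z_k$ (the top of the enclosing block) rather than $z_{k-1}$ (the bottom) as the $(j+1)$-th reversing element of $u$ in the equality case.
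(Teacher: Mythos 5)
Your proof is correct and follows essentially the same route as the paper: Proposition~\ref{pro:uWLWR} to make $\min(\cgB)$ a common point of $u$ and transfer the $j$ reversing elements (statement~\ref{pro:item:depth}), Proposition~\ref{pro:path-in-block} to confine the post-$\min(\cgB)$ blocks of $u$ to $\cgB$ (the inclusion), and the observation that $y$ lies on the boundary of $\shad_j(y)$ but can only lie in $\shad_j(u)$ if $y\le_P u$ (the equality characterization). The only cosmetic difference is in the direction $u\ge_P y\Rightarrow\shad_j(u)=\shad_j(y)$, where you run a four-case analysis on the $x_0$-consistent pair $\Wleft(u),\Wleft(y)$ while the paper appeals directly to the left side of $\cgB$ being the leftmost continuation from $\min(\cgB)$ staying in $\cgB$; both hinge on the same use of Propositions~\ref{pro:shortcuts} and~\ref{pro:path-in-block}.
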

\begin{proof}
  We first prove statement~\ref{pro:item:depth}. 
  Let $(z_0,\ldots,z_m)$ be the sequence of common points of $z$. 
  Note that $x$ is in the sequence, say $x=z_i$. 
  Let $z_{i'}$ be the base element of the $\shad_j(z)$. 
  By our assumption we have $z_{i'} \leq z_i=x \leq u$ in $P$ and at least 
  one of the inequalities is strict.
  Proposition~\ref{pro:uWLWR} implies
  that $x$ is also a common point for $u$. 
  Thus, $(z_0,\ldots,z_i)$ is a prefix of the sequence of common points of $u$.
  Of these, $j$ elements are reversing for $z$ so also for $u$
  (and in the case $z_i=u$, the $j$ reversing elements are before $z_i$ as 
  $z_{i'}<u$ in $P$).
  Thus, $\sd(u)\ge j$.  This completes the
  proof of statement~\ref{pro:item:depth}.

  Now we prove statement~\ref{pro:item:depth-with-y}.  Any block of $\shad_j(z)$ that precedes $\cgB$ is
  also a block of both $\shad_j(u)$ and $\shad_j(y)$.  Now consider
  a block $\cgD$ of $\shad_j(u)$ with $x\le\min(\cgD)$.  Then the
  two sides of $\cgD$ are witnessing paths from $\min(\cgD)$ to
  $\max(\cgD)$.  However, $x\le \min(\cgD)<\max(\cgD)\le u$ in $P$.  It follows
  from Proposition~\ref{pro:path-in-block} that all edges and vertices of the
  two sides of $\cgD$ are in $\cgB$.  This implies that $\cgD\subseteq\cgB\subseteq
  \shad_j(y)$. 

  Now suppose that $\shad_j(y)=\shad_j(u)$. 
  Recall that all elements $w$ of $P$ on the boundary of $\shad_j(u)$ 
  satisfy $w\leq u$ in $P$. Since $y$ is on the boundary of $\shad_j(u)$, 
  we conclude $y \leq u$ in $P$. 
  Now suppose that $y \leq u$ in $P$. 
  As noted previously, $x \in \Wleft(u)$. 
  By Proposition~\ref{pro:path-in-block}, the whole path 
  $x\Wleft(u)u$ lies in $\cgB$. 
  Note that the left side of $\cgB$, i.e.\ $x\Wleft(y)y$ is the lefmost possible continuation (of $\Wleft(u)$) from $x$ that stays in $\cgB$. 
  Therefore, the assumption $y \leq u$ in $P$ implies that $x\Wleft(y)y$ is a part of $\Wleft(y)$. 
  In particular, $y \in \Wleft(u)$. 
  Symmetrically, we argue that $y\in \Wright(u)$. 
  All this together implies that $\shad_j(y) = \shad_j(u)$ as desired.
  With this observation, the proof of statement~\ref{pro:item:depth-with-y} is complete.
\end{proof}

\subsection{The Address of an Inside Pair}

Let $(a,b)$ be a pair of distinct elements in $U_P(x_0)$.
We define the \emph{depth of $(a,b)$} to be the least non-negative
integer $j$ such that $\shad_j(a)\neq\shad_j(b)$.  Note that 
if $j$ is the depth of $(a,b)$, then (1)~$\shad_j(a)$ and
$\shad_j(b)$ have the same base element; and (2)~$\shad_j(a)$ and $\shad_j(b)$
do not have the same terminal element.

\begin{proposition}\label{pro:geometry-inside}
  Let $(a,b)\in\Inc(P)$, and let $j$ be the depth of $(a,b)$.  Then
  $(a,b)$ is an inside pair if and only if $a$ is in the 
    interior of a block of $\shad_j(b)$.
\end{proposition}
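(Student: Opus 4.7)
The plan is to prove both directions of the biconditional separately.

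For the $(\Leftarrow)$ direction, I will suppose that $a$ lies in the interior of some block $\cgB$ of $\shad_j(b)$. By Proposition~\ref{pro:uWLWR}, $\min(\cgB)$ lies on both $\Wleft(a)$ and $\Wright(a)$, and by Proposition~\ref{pro:path-in-block}, the portions of these paths from $\min(\cgB)$ to $a$ stay inside $\cgB$. Because $\cgB$ is a shadow block of $b$, its left side is a sub-path of $\Wleft(b)$ and its right side a sub-path of $\Wright(b)$. Let $v$ denote the largest common element of $\Wleft(a)$ and $\Wleft(b)$; both paths contain $\min(\cgB)$, so $v$ lies on the left side of $\cgB$. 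At $v$ the path $\Wleft(b)$ continues along the left boundary of $\cgB$, while $\Wleft(a)$ must turn inward to reach the interior point $a$. A clockwise-ordering analysis at $v$, with the common incoming edge as the base, shows that every edge into the interior of $\cgB$ at $v$ comes after the boundary-continuing edge in the $(v,e^-)$-ordering, so $\Wleft(a)$ is $x_0$-right of $\Wleft(b)$. The symmetric argument yields $\Wright(a)$ is $x_0$-left of $\Wright(b)$, and hence $(a,b)$ is inside.

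For the $(\Rightarrow)$ direction, I will suppose $(a,b)$ is inside with depth $j$. Since $a\parallel_P b$, $a$ is not on $\Wleft(b)\cup\Wright(b)$, so $a$ does not lie on the boundary of any block of $b$. Write $x=z_{i_j}$ for the common base of $\shad_j(a)$ and $\shad_j(b)$ and $t=z_{i_{j+1}}$ for the terminal of $\shad_j(b)$. Both $\Wleft(a)$ and $\Wright(a)$ contain $x$, so the split points $v_L$ and $v_R$ of $\Wleft(a)$ vs.\ $\Wleft(b)$ and of $\Wright(a)$ vs.\ $\Wright(b)$ lie at $x$ or beyond on the respective paths.

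The key step will be to rule out that both $v_L$ and $v_R$ lie past $t$. If they did, every common point $z_0,\ldots,z_{i_{j+1}}$ of $b$ would become a common point of $a$, and the nested-block structure—namely, that each terminal block $\cgB_{i_s}^b$ ($s\le j$) contains all deeper shadow regions while each intermediate block $\cgB_i^b$ with $i_s<i<i_{s+1}$ is disjoint from them—would force the reversings of $a$ among $z_0,\ldots,z_{i_{j+1}}$ to coincide with those of $b$. That would give $\shad_j(a)=\shad_j(b)$, contradicting the depth hypothesis. So, without loss of generality, $v_L$ lies on $x\Wleft(b)t$. At $v_L$, the same clockwise-ordering analysis as in the $(\Leftarrow)$ direction shows that $\Wleft(a)$'s next edge enters the interior of the block $\cgB$ of $\shad_j(b)$ whose left side contains $v_L$. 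Combining Proposition~\ref{pro:path-in-block} with a parallel analysis of $\Wright(a)$ from $v_R$ confines $a$ to the region $\shad_j(b)$, and since $a$ is off all boundaries, $a$ lies in the interior of some block of $\shad_j(b)$.

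The main obstacle is this second direction—pinning $a$'s location to $\shad_j(b)$ precisely. Ruling out the "both splits past $t$" case requires the careful reversing-matching argument via Propositions~\ref{pro:uWLWR} and \ref{pro:shadow-comp}; once the first split is known to lie within $\shad_j(b)$, showing that $\Wleft(a)$ and $\Wright(a)$ cannot exit $\shad_j(b)$ before reaching $a$ rests on Proposition~\ref{pro:path-in-block}.
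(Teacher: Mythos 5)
Your $(\Leftarrow)$ direction is essentially the paper's argument (Proposition~\ref{pro:uWLWR}, then Proposition~\ref{pro:path-in-block}, then a clockwise comparison at the split point), though note that your sentence ``$\Wleft(b)$ continues along the left boundary of $\cgB$ at $v$'' fails in the admissible case $v=\max(\cgB)$ (which occurs when $a>_P\max(\cgB)$ and $\cgB$ is not terminal); the paper has to argue separately there that the edge of $\Wleft(b)$ after $\max(\cgB)$ leaves $\cgB$, after which the same ordering conclusion holds.

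The $(\Rightarrow)$ direction has two genuine gaps. First, your local analysis at $v_L$ uses only the condition ``$\Wleft(a)$ is $x_0$-right of $\Wleft(b)$,'' and this does \emph{not} force the next edge of $\Wleft(a)$ into the interior of the block whose left side contains $v_L$ when $v_L$ is one of the common points $z_i$ of $b$. At such a junction the clockwise $(z_i,e_L^-)$-ordering places, after $e_L^+$, first the interior of $\cgB_{i+1}$, then the sector between $e_R^+$ and $e_R^-$ (which is \emph{outside} $\shad_j(b)$ altogether), and then the interior of $\cgB_i$; being ``right of $e_L^+$'' permits all three. The $\Wright$-half of the inside hypothesis is needed at the \emph{same} vertex to exclude the exterior sector, and it is only available at a point lying on both $\Wleft(a)$ and $\Wright(a)$. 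This is exactly why the paper anchors its analysis at $x=a_s=b_s$, the last element of the common prefix of the two sequences of common points, and cases on the cyclic order of the four edges $e_L^\pm,e_R^\pm$ there; that analysis also produces the sub-case in which $a$ lands in the block \emph{preceding} $x$ (so that $x$ becomes a reversing element of $a$), which your framing omits. Second, Proposition~\ref{pro:path-in-block} cannot ``confine $a$'' to a block: it presupposes that both endpoints of the witnessing path lie in the block, which is precisely what is to be proved. The paper instead notes that at least one of $x\Wleft(a)a$, $x\Wright(a)a$ avoids $y=\max(\cgB)$ (otherwise $y$ would be a further common point of $a$), and shows that if that path exited $\cgB$ through a point strictly on the left (resp.\ right) side, Proposition~\ref{pro:piotrek} would make $\Wleft(a)$ $x_0$-left of $\Wleft(b)$ (resp.\ $\Wright(a)$ $x_0$-right of $\Wright(b)$), contradicting the inside hypothesis. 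Replacing your two steps by these arguments is necessary to make the direction go through.
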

\begin{proof}
  Suppose first that $a$ is in the interior of the block $\cgB$ of $\shad_j(b)$.
  We show that $(a,b)$ is an inside pair, i.e., we must show (1)~$\Wleft(b)$ is
  $x_0$-left of $\Wleft(a)$; and (2)~$\Wright(b)$ is $x_0$-right of $\Wright(a)$.
  We prove the first of these two statements.  
  The argument for the second is symmetric. 
  Let $x=\min(\cgB)$ and let $y=\max(\cgB)$. 
  Proposition~\ref{pro:uWLWR} implies that $x$ is in both 
  $\Wleft(a)$ and $\Wright(a)$. 
  This means that $\Wleft(a)$ and $\Wleft(b)$ coincide from $x_0$ to $x$, 
  and also $\Wright(a)$ and $\Wright(b)$ coincide from $x_0$ to $x$.

  Note that if $a\geq y$ in $P$, then 
  Proposition~\ref{pro:shadow-comp} implies $\shad_j(a)=\shad_j(y)$. 
  Since the depth of $(a,b)$ is $j$, we must have 
  $\shad_j(a)=\shad_j(y)\subsetneq \shad_j(b)$. 
  This means that if $a\geq y$ in $P$, then 
  $b$ is not in $\cgB$ and the edges of $\Wleft(b)$ and $\Wright(b)$ 
  immediately after $y$ are not in $\cgB$.
  
  Since $x$ and $a$ are in $\cgB$, 
  Proposition~\ref{pro:path-in-block} implies that all edges and vertices
  of $x\Wleft(a)a$ are in $\cgB$.  
  Let $z$ be the largest point
  on the left side of $\cgB$ such that $z\in\Wleft(a)$. 
  As we discussed $x\leq z\leq y$ in $P$ and clearly, 
  $x\Wleft(a)z$ is a prefix of the left side of $\cgB$. 
  Let $e^+$ and $e^-$ be the edges of
  $\Wleft(b)$ that are, respectively, immediately after and immediately before
  the vertex $z$.  
  Again, if $z=x=x_0$ then $e^-=e_{-\infty}$. 
  Let $e$ be the edge of $\Wleft(a)$ immediately after
  $z$.  
  Since $e$ is inside $\cgB$ and 
  $e^+$ is on the boundary of $\cgB$ or outside $\cgB$ 
  (if $z=y$; by the previous paragraph), 
  we conclude that 
  $e^+$ is left of $e$ in the $(z,e^-)$-ordering. 
  This implies $\Wleft(b)$ is
  $x_0$-left of $\Wleft(a)$. 
  A symmetric argument shows
  that $\Wright(b)$ is $x_0$-right of $\Wright(a)$.  Together, these statements
  imply that $(a,b)$ is an inside pair.

  For the second part of the proof, we assume that $(a,b)$ is an inside
  pair and show that there is some block of $\shad_j(b)$ such that
  $a$ is in the interior of that block.  Let $(a_0,\dots,a_m)$ and
  $(b_0,\dots,b_n)$ be the sequences of common elements of $a$ and $b$,
  respectively. 
  Since $(a,b)$ is of depth $j$, 
  $\shad_j(a)$ and $\shad_j(b)$ have the same base element, 
  which we denote as $x_j$.
  Let $x$ be the last element of the common prefix of the two 
  sequences, say $x=a_s=b_s$. 
  Note that $x$ does not occur before $x_j$ and $x\not\in\set{a,b}$.
  Let $y=b_{s+1}$ be the common point of $b$ that occurs in the sequence immediately after $x$ and let $\cgB$ be the block of $\shad_j(b)$ with $x=\min(\cgB)$ and $y=\max(\cgB)$.
Let $e^-_L$, $e^-_R$ be the edges of $\Wleft(b)$ and $\Wright(b)$ respectively that are immediately before $x$ 
(as usual $e^-_L=e^-_R=e_{-\infty}$ if $x=x_0$). 
Note that these edges are also in $\Wleft(a)$ and $\Wright(a)$, respectively. 
Let $e^+_L$, $e^+_R$ be the edges of $\Wleft(b)$ and $\Wright(b)$, respectively, that are immediately after $x$.

We first consider the case when $x=x_j$.
In this case, the clockwise cyclic ordering of our four distinguished edges 
around $x$ is 
$e^-_L \preceq e^-_R \prec e^+_L \preceq e^+_R \prec e^-_L$ 
(the first $\preceq$ becomes $=$ only if $x=x_j=x_0$ and then both edges coincide with $e_{-\infty}$).
Now immediately from the fact that $(a,b)$ is an inside pair, we conclude that 
$e^+_L \preceq e \preceq e^+_R$
when $e$ is either 
the first edge of $x\Wleft(a)a$  or 
the first edge of  $x\Wright(a)a$.
Therefore both edges belong to $\cgB$. 

  We claim that $a$ is in $\cgB$. 
  In order to prove that, note first that at least one of the paths 
  $x\Wleft(a)a$, $x\Wright(a)a$ is not going through $y$. 
  Indeed, if both contain $y$, 
  then $y$ would be a common point of $a$, but it is not.
  Let $W$ be one of the two paths $x\Wleft(a)a$, $x\Wright(a)a$ 
  avoiding $y$. 
  We already proved that the first edge of $W$ is in $\cgB$.
  Now in order to get a contradiction suppose that $a$ is not in $\cgB$. 
  Therefore, $W$ has to eventually leave $\cgB$. 
  Let $z$ be the first element of $W$ such that the edge $e$ immediately after $z$ on $W$ is not in $\cgB$. 
  This implies that $z$ is on the boundary of $\cgB$. 
  Since $z\not\in\set{x,y}$, we have $x<z<y$ in $P$. 
  Thus, $z$ is strictly on one of the boundaries of $\cgB$. 
  If $z$ is on the left boundary, then let $e^-$ and $e^+$ be the edges 
  of $\Wleft(b)$, respectively, immediately before and immediately after $z$. 
  Since $e$ is not in $\cgB$, we have that $e$ is left of $e^+$ 
  in the $(z,e^-)$-ordering. 
  Now Proposition~\ref{pro:piotrek} implies that 
  $\Wleft(a)$ is $x_0$-left of $\Wleft(b)$, 
  contradicting that $(a,b)$ is an inside pair. 
  If $z$ is on the right boundary, then we argue symmetrically and 
  obtain that $\Wright(a)$ is $x_0$-right of $\Wright(b)$, 
  which is also a contradiction.

  Now we proceed with the case $x\neq x_j$ 
  so $x$ is not a reversing element of $b$. 
  In this case, the clockwise cyclic ordering of our four distinguished edges 
around $x$ is 
$e^-_L \prec e^+_L \preceq e^+_R \prec e^-_R \preceq e^-_L$.
Again, immediately from the fact that $(a,b)$ is an inside pair, we conclude that either $e^-_R \prec e \prec e^-_L$ or 
$e^+_L \preceq e \preceq e^+_R$
when $e$ is either 
the first edge of $x\Wleft(a)a$  or 
the first edge of  $x\Wright(a)a$. 
Consider first the case that $e^-_R \prec e \prec e^-_L$. 
Let $x'$ be the common point of $b$ (and $a$) just before $x$ in the sequence of the common points. Let $\cgB'$ be the block of $\shad_j(b)$ (and $\shad_j(a)$) between $x'$ and $x$. In this case, we have that $x$ is a reversing element of $a$ and $a$ is in $\cgB'$, as desired.
Now consider the case that $e^+_L \preceq e \preceq e^+_R$. 
This means that the first edges of both $x\Wleft(a)a$ and $x\Wright(a)a$ are both in $\cgB$.
This is exactly the setup of the proof we had when $x=x_j$. 
Thus, we proceed as before and we prove that $a$ has to be in $\cgB$, as desired.
This completes the proof of the proposition.
\end{proof}

Using Proposition~\ref{pro:geometry-inside},
we make the following definition. The \emph{address} of an inside pair $(a,b)$ is the
uniquely determined pair $(j,\cgB)$ 
such that 
\begin{enumerate}
  \item $j$ is the depth of $(a,b)$; and 
  \item $\cgB$ is the block of $\shad_j(b)$
    that contains $a$ in its interior.  
\end{enumerate}

Now let $(a,b)$ be an inside pair, let $(j,\cgB)$ be the address of $(a,b)$,
and let $y=\max(\cgB)$.  Then $j=\sd(y)$, and $y\le b$ in $P$.  This
implies $a\not\le y$ in $P$.
If $\cgB$ is the terminal block of $\shad_j(b)$, then $a\parallel y$ in $P$ 
(as otherwise $a> y$ in $P$ and this implies $\shad_j(a)=\shad_j(y)=\shad_j(b)$, which is false).
If $\cgB$ is not the terminal block of 
$\shad_j(b)$, then either $a\parallel y$ in $P$ or $a> y$ in $P$.
\begin{example}\label{exa:addresses}
  We illustrate the notion of addresses using the poset whose cover
  graph is shown in Figure~\ref{fig:shadows}:
  \begin{enumerate}
    \item The address of $(b,z)$ is $(0,\cgB_1)$.
    \item The address of $(h,z)$ is $(0,\cgB_2)$.
    \item The address of $(v,z)$ is $(1,\cgB_4)$.
    \item The address of $(q,z)$ is $(2,\cgB_6)$.
  \end{enumerate}
\end{example}

In an effort to avoid the pathology displayed by the pairs in
Figure~\ref{fig:pathology}, we introduce a notion of parity for
inside pairs.  For each $\theta\in\{0,1\}$, we let $I_\theta$ consist of all inside
pairs $(a,b)$ such that if $(j,\cgB)$ is the address of $(a,b)$,
then $j\equiv\theta\mod2$. The set of all inside pairs is
partitioned as $I_0 \sqcup I_1$.  

\begin{figure}[!h]
  \begin{center}
    \includegraphics[scale=.8]{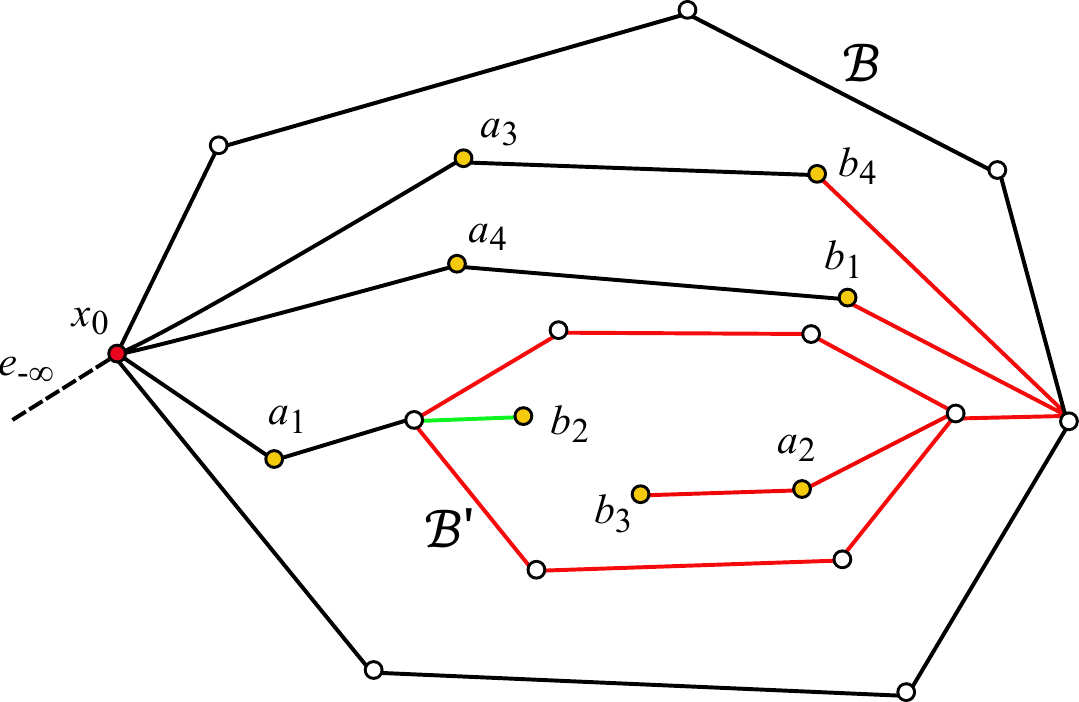}

  \end{center}
  \caption{$((a_1,b_1),\dots,(a_4,b_4))$ is a strict alternating cycle of inside
  pairs.  The common address of $(a_1,b_1)$, $(a_3,b_3)$, and $(a_4,b_4)$ is $(0,\cgB)$.
  However, the address of $(a_2,b_2)$ is $(1,\cgB')$. 
  These alternating cycles with pairs of different addresses is what we want to avoid 
  by partitioning $I$ into $I_0$ and $I_1$. 
  In this figure, 
    the black and green edges are oriented left-to-right in the plane, 
    while the red edges are oriented right-to-left.
  }
  \label{fig:pathology}
\end{figure}

We now prove a comprehensive technical lemma that provides 
structural information about strict alternating cycles 
in $I_0$ and $I_1$.  The conclusions of this lemma justify the notion of parity
for inside pairs.
\begin{lemma}\label{lem:cgI-comprehensive}
  Let $\theta\in\{0,1\}$, and let $((a_1,b_1),\dots,(a_k,b_k))$ be a strict
  alternating cycle of pairs from $I_\theta$. 
  Then there is a pair $(j,\cgB)$ that is 
  a common address of $(a_{\alpha},b_{\alpha})$ for all $\alpha\in[k]$. 
  Moreover, for all $\alpha\in[k]$
  the following statements hold:
  \begin{enumerate}
    \item $\cgB$ is the terminal block of $\shad_j(b_\alpha)$;
    \item $a_\alpha$ and $b_\alpha$ are in the interior of $\cgB$; 
    \item $a_\alpha\parallel\max(\cgB)$ and $b_\alpha>\max(\cgB)$ in $P$;
    \item $\sd(a_\alpha)=j$, $\sd(b_{\alpha})>j$.\label{lem:comprehensive:item:sd}
  \end{enumerate}
\end{lemma}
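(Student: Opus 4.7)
My plan is to exploit the geometric structure forced by the strict alternating cycle, together with the parity hypothesis $j_\alpha \equiv \theta \pmod{2}$, to force all addresses to coincide.

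\textbf{Step 1 (Localization).} I first show that $b_{\alpha+1} \in \cgB_\alpha$ for every $\alpha$. Since $a_\alpha$ lies in the interior of $\cgB_\alpha$, and every element $w$ on the boundary of $\cgB_\alpha$ satisfies $w \le \max(\cgB_\alpha) \le b_\alpha$ in $P$, a witnessing path from $a_\alpha$ up to $b_{\alpha+1}$ that exited $\cgB_\alpha$ would meet such a $w$, producing the comparability $a_\alpha < w \le b_\alpha$ and contradicting $a_\alpha \parallel b_\alpha$.

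\textbf{Step 2 (Cyclic collapse at depth $j$).} From $b_{\alpha+1} \in \cgB_\alpha$, Propositions~\ref{pro:uWLWR} and~\ref{pro:shadow-comp} yield $\sd(b_{\alpha+1}) \ge j_\alpha$, the equalities $\shad_{j'}(b_{\alpha+1}) = \shad_{j'}(b_\alpha)$ for $j' < j_\alpha$, and the inclusion $\shad_{j_\alpha}(b_{\alpha+1}) \subseteq \shad_{j_\alpha}(\max(\cgB_\alpha)) \subseteq \shad_{j_\alpha}(b_\alpha)$. Let $j = \min_\alpha j_\alpha$. Then each link of the cycle gives $\shad_j(b_{\alpha+1}) \subseteq \shad_j(b_\alpha)$, and chaining cyclically forces each inclusion to be an equality, so $\shad_j(b_\alpha)$ is a common shadow $\cgS$. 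I let $\cgB$ be the terminal block of $\cgS$ and set $y := \max(\cgB)$.

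\textbf{Step 3 (Conclusions for indices with $j_\alpha = j$).} For any such $\alpha$, the equality criterion in Proposition~\ref{pro:shadow-comp} forces $\cgB_\alpha$ to be the terminal block of $\shad_j(b_\alpha)$, whence $\cgB_\alpha = \cgB$, and $b_{\alpha+1} \ge y$. The remark following Proposition~\ref{pro:geometry-inside} gives $a_\alpha \parallel y$; combined with $a_\alpha \le b_{\alpha+1}$ this rules out $b_{\alpha+1} = y$, so $b_{\alpha+1} > y$ strictly. Since boundary elements of $\cgB$ are $\le y$ and $b_{\alpha+1} \not\le y$, $b_{\alpha+1}$ lies in the interior of $\cgB$, making $y$ a reversing element for $b_{\alpha+1}$ and thus $\sd(b_{\alpha+1}) > j$. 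The equality $\sd(a_\alpha) = j$ follows because $a_\alpha \parallel y$ precludes additional reversing elements of $a_\alpha$ above $\min(\cgB)$ in its own decomposition. All four numbered conclusions therefore hold at every $\alpha$ with $j_\alpha = j$.

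\textbf{Step 4 (The main obstacle: excluding $j_\alpha > j$).} What remains, and is the heart of the argument, is to show that no index satisfies $j_\alpha > j$. Here the parity hypothesis is essential: if $j_\alpha > j$, then $j_\alpha \ge j+2$ by parity, so $b_\alpha$ would carry an additional reversing element $y' > y$ strictly inside $\cgB$, and $\cgB_\alpha$ would sit strictly deeper with $a_\alpha > y$. I expect to obtain a contradiction by iterating the localization and cyclic-collapse arguments of Steps~1--2 at depth $j+1$, which is available since Step~3 yields $\sd(b_\beta) > j$ for every $\beta$. The strict-cycle requirement $a_\alpha \not\le b_\beta$ for $\beta \neq \alpha+1$, combined with the left--right orientation flip at the reversing element $y$, should produce the incompatibility; the parity condition is precisely what prevents an intermediate odd-depth index from absorbing that obstruction. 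Once $j_\alpha = j$ is forced throughout, Step~3 yields the lemma uniformly.
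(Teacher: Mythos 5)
Your Steps 1 and 2 reproduce the paper's first claim correctly: the localization $b_{\alpha+1}$ in the interior of $\cgB_\alpha$ via a witnessing path that would otherwise hit the boundary and force $a_\alpha<b_\alpha$, followed by the cyclic chain $\shad_j(b_{\alpha+1})\subseteq\shad_j(\max(\cgB_\alpha))\subseteq\shad_j(b_\alpha)$ collapsing to equalities, is exactly the paper's route. But there are two genuine gaps. The smaller one is in Step 3: your justification of $\sd(a_\alpha)=j$ --- that ``$a_\alpha\parallel y$ precludes additional reversing elements of $a_\alpha$'' --- is not a valid inference, since nothing a priori prevents $a_\alpha$ from having its own reversing elements at depth greater than $j$ inside $\cgB$ that are unrelated to $y$. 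The correct argument needs the cycle: if $\sd(a_\alpha)>j$, the terminal block $\cgB'$ of $\shad_j(a_\alpha)$ satisfies $\cgB'\subsetneq\cgB$ with $y$ exterior to $\cgB'$ and both $a_\alpha$ and $b_{\alpha+1}$ interior to it; a witnessing path from $y$ to $b_{\alpha+1}$ (which exists because $b_{\alpha+1}>y$ in $P$) must then cross the boundary of $\cgB'$, forcing $y<a_\alpha$ in $P$, a contradiction.

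The serious gap is Step 4, which you correctly identify as the heart of the lemma but leave as a plan (``I expect to obtain a contradiction\dots should produce the incompatibility''), and the plan points in the wrong direction: the argument does not iterate the cyclic collapse at depth $j+1$, and no ``left--right orientation flip'' is involved. What one actually does is pick cyclically consecutive indices with $j_1=j$ and $j_2>j$ (possible whenever $M\neq[k]$), so that the parity hypothesis gives $j_2\ge j+2$. Then $b_2$ has a terminal block $\cgB'$ of $\shad_{j+1}(b_2)=\shad_{j+1}(a_2)$ and a block $\cgB''$ of $\shad_{j+2}(b_2)$ containing $a_2$ in its interior, with
\[
  y\le\min(\cgB')<\max(\cgB')\le\min(\cgB'')<\{a_2,b_2\}\quad\text{in }P.
\]
If $a_1\in\cgB'$, then Proposition~\ref{pro:uWLWR} gives $y<\min(\cgB')\le a_1$, contradicting $a_1\parallel y$; so $a_1\notin\cgB'$, and since $a_1\le b_2$ in $P$ a witnessing path from $a_1$ to $b_2$ meets the boundary of $\cgB'$, yielding $a_1<\max(\cgB')\le\min(\cgB'')<a_2$ and hence $a_1<a_2$ in $P$, contradicting the strictness of the alternating cycle. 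Without this (or an equivalent) argument, $M=[k]$ is not established and the lemma's conclusions remain proved only for the indices of minimal depth.
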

\begin{proof}
  For each $\alpha\in[k]$, let $j_\alpha$ be the depth of the
  pair $(a_\alpha,b_\alpha)$.  Set $j=\min\{j_\alpha:\alpha\in[k]\}$.  
  By Proposition~\ref{pro:geometry-inside}, for each $\alpha\in[k]$, 
  we can fix a block $\cgB_\alpha$ of $\shad_j(b_\alpha)$ such that $a_\alpha$ is in the interior of $\cgB_\alpha$. 

  The remainder of the argument will be organized within a series of three claims.

  \begin{claim}\label{clm:cgI:1}
    There is a block $\cgB$ such that for all $\alpha\in[k]$, the following 
    statements hold: 
    \begin{enumerate}
      \item $\cgB=\cgB_\alpha$ is the terminal block of $\shad_j(b_\alpha)$.
      \item $a_\alpha$ and $b_\alpha$ are in the interior of $\cgB$. 
      \item  $b_\alpha>\max(\cgB)$ in $P$.  
    \end{enumerate}
  \end{claim}
  \begin{proof}
    Let $\alpha\in[k]$.  Since $a_\alpha\parallel_P b_\alpha$, 
    $a_\alpha\le_P b_{\alpha+1}$, and 
    $a_{\alpha}$ is in the interior of $\cgB_{\alpha}$,
    we assert that $b_{\alpha+1}$ is also in 
    the interior of $\cgB_\alpha$.  If this assertion fails, then a witnessing path 
    from $a_\alpha$ to $b_{\alpha+1}$ would contain a point $z$ from the boundary 
    of $\cgB_\alpha$.  This would force $a_\alpha< z\le b_\alpha$ in $P$.  Clearly, this
    statement is false.  Thus, indeed $b_{\alpha+1}$ is in 
    the interior of $\cgB_\alpha$.
 
    Using Proposition~\ref{pro:shadow-comp}, this implies the following inclusion relations: 
    \[ \shad_j(b_{\alpha+1})\subseteq\shad_j(\max(\cgB_\alpha))\subseteq\shad_j(b_\alpha).  
    \] 
    Since these inclusions hold for all $\alpha\in[k]$, cyclically, we conclude that 
    for all $\alpha\in[k]$, 
    \[ 
    \shad_j(b_{\alpha+1})=\shad_j(\max(\cgB_\alpha))=\shad_j(b_\alpha).  
    \] 

    Since $\shad_j(b_{\alpha})$ are the same for all $\alpha\in[k]$, 
    their terminal blocks also coincide. 
    Let $\cgB$ be the common terminal block.
    The fact that $\shad_j(\max(\cgB_\alpha))= 
    \shad_j(b_\alpha)$ implies that $\cgB_\alpha$ is the terminal block of 
    $\shad_j(b_\alpha)$, so $\cgB=\cgB_{\alpha}$ for all $\alpha\in[k]$. 
    The fact that $\shad_j(b_{\alpha+1})=\shad_j(\max(\cgB))$ 
    coupled with the fact that $b_{\alpha+1}$ 
    is in the interior of $\cgB$,  
    together with Proposition~\ref{pro:shadow-comp} imply 
    $b_{\alpha+1}> \max(\cgB)$ in $P$.
    With these observations, the
    proof of the claim is complete.
  \end{proof}

  Let $y=\max(\cgB)$, and let $M$ be the set of all $\alpha\in[k]$ such that 
  $j_\alpha=j$. Since $j=\min\{j_\alpha:\alpha\in[k]\}$, we know that $M\neq\emptyset$.
  \begin{claim}\label{clm:cgI:2}
    If $\alpha\in M$, then $a_\alpha\parallel_P y$ and $\sd(a_\alpha)=j$.
  \end{claim}
  \begin{proof} 
    Let $\alpha\in M$.   Since the depth of $(a_{\alpha},b_{\alpha})$ is 
    $j$, we have $\shad_j(a_\alpha)\subsetneq \shad_j(b_\alpha)=\shad_j(y)$.  
    If $a_\alpha\leq_P y$, then $a_\alpha\leq_P b_\alpha$, which is false.  
    If $a_\alpha>_P y$, then Proposition~\ref{pro:shadow-comp} implies $\shad_j(a_\alpha)=\shad_j(b_\alpha)$, 
    which is false.  We conclude that $a_\alpha\parallel_P y$.

    Now suppose that $\sd(a_\alpha)>j$.  Let $\cgB'$ be the terminal block of 
    $\shad_j(a_\alpha)$.  Since $a_\alpha$ is in the interior of $\cgB$, and 
    $a_\alpha\parallel_P y$, it follows that $\cgB'\subsetneq \cgB$.  In particular, 
    since $w<_P a_\alpha$ for all elements $w$ on the boundary of $\cgB'$, it follows 
    that $y$ is in the exterior of $\cgB'$.  
    Furthermore, $\sd(a_{\alpha})>j$ implies that 
    $a_\alpha$ is in the 
    interior of $\cgB'$.  Since $a_\alpha\le_P b_{\alpha+1}$, it follows 
    that $b_{\alpha+1}$ is also in the interior of $\cgB'$.  

    Now let $W$ be a witnessing path from $y$ to $b_{\alpha+1}$.  Then 
    $W$ contains a point $w$ of $P$ that is on the boundary of $\cgB'$.  
    This implies $y< w< a_\alpha$ in $P$ so in particular $y< a_\alpha$ in $P$, which 
    is false.  With this observation, the proof of the claim is complete.
  \end{proof}
  \begin{claim}\label{clm:cgI:3}
    $M=[k]$. 
  \end{claim}
  \begin{proof} 
    If this claim fails, then after a relabeling of the pairs on the 
    cycle, we can assume that\footnote{Note that this is the only place in the 
    proof we use the assumption that all elements of $\{j_1,\dots,j_k\}$ have the 
    same parity.} $j_1=j$ and $j_2\ge j+2$.  Let $\cgB'$ be the terminal 
    block of $\shad_{j+1}(b_2)=\shad_{j+1}(a_2)$, and let $\cgB''$ be the block of 
    $\shad_{j+2}(b_2)$ containing $a_{2}$ in its interior.  Also, let $x'=\min(\cgB')$ 
    and $x''=\min(\cgB'')$.  Then $\cgB''\subsetneq \cgB'$, and 
    \[ 
    y\le x'<\max(\cgB')\le x''< \{a_2,b_2\}\ \textrm{in $P$}.  
    \] 

    If $a_1\in\cgB'$, then Proposition~\ref{pro:uWLWR} implies $y<_P x'\le_P a_1$.  This implies 
    $y<_P a_1$, which is false.  We conclude that $a_1\not\in\cgB'$.  
    Noting that $a_1\le_P b_2$, it follows that a witnessing path $W$ from $a_1$ to $b_2$ 
    must contain a point $z$ from the boundary of $\cgB'$.  This implies 
    $a_1< z\le\max(\cgB')\le x''$ in $P$. In turn, this implies $a_1<_P a_2$, which is false.  
    The contradiction completes the proof of the claim.
  \end{proof}
  The four statements of the lemma now follow directly from the
  three claims, so the proof of Lemma~\ref{lem:cgI-comprehensive} is complete. 
\end{proof}

\subsection{Separating Paths in Shadow Blocks}
When $\cgB$ is a shadow block, we let $x_\cgB=\min(\cgB)$, $y_\cgB=\max(\cgB)$.
We have already noted that if $u\in P$ and $u\in\cgB$, then 
Proposition~\ref{pro:uWLWR} implies that $x_\cgB$ belongs
to $\Wleft(u)$ and $\Wright(u)$. 
In particular, this implies $x_\cgB\le_P u$.
Now let $u$ be an element of $P$ that is in $\cgB$.
We assign $u$ to $A_\cgB$ if $u\parallel_P y_\cgB$; we assign $u$ to $B_\cgB$ 
if $u>_P y_\cgB$; and we assign $u$ to $Z_\cgB$ if $u\leq_P y_\cgB$.
Evidently, the three sets $A_\cgB$, $B_\cgB$, and $Z_\cgB$ are 
pairwise disjoint and they partition the 
set of elements of $P$ being in $\cgB$.

\begin{proposition}\label{pro:all-inc-are-inside}
Let $\cgB$ be a shadow block and 
let $j=\sd(y_{\cgB})$. 
Let $a\in A_{\cgB}$, $b\in B_{\cgB}$, and $a\parallel b$ in $P$. 
Then $(a,b)$ is an inside pair with address $(j,\cgB)$.
\end{proposition}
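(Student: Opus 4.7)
The plan is to verify three items: (i) $\cgB$ is the terminal block of $\shad_j(b)$, (ii) $a$ lies in the interior of $\cgB$, and (iii) $j$ is the depth of $(a,b)$. Once these are in hand, Proposition~\ref{pro:geometry-inside} together with the uniqueness built into the definition of address immediately yield the result.

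First, because $\cgB$ is a shadow block with maximum $y_\cgB$, Proposition~\ref{pro:shadows-basic-props} tells us that $\cgB$ is the terminal block of $\shad_j(y_\cgB)$ for $j=\sd(y_\cgB)$. Write $\cgB^{(i)}$ for the terminal block of $\shad_i(y_\cgB)$, so that the terminal blocks are nested: $\cgB = \cgB^{(j)} \subseteq \cgB^{(j-1)} \subseteq \cdots \subseteq \cgB^{(0)}$. Now apply Proposition~\ref{pro:shadow-comp} to $\cgB$ viewed as a block of $\shad_j(y_\cgB)$. With $u = b$ the equality case triggers since $b >_P y_\cgB$, yielding $\shad_j(b) = \shad_j(y_\cgB)$; this gives (i). With $u = a$ the equality case fails because $a \parallel_P y_\cgB$, so $\shad_j(a) \subsetneq \shad_j(y_\cgB) = \shad_j(b)$. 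Finally, since every vertex on the boundary of $\cgB$ is $\leq_P y_\cgB$ and $a \parallel_P y_\cgB$, the element $a$ cannot be on the boundary of $\cgB$; this gives (ii).

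For (iii), I need $\shad_i(a) = \shad_i(b)$ for every $i < j$. For $b$, apply Proposition~\ref{pro:shadow-comp} inside $\cgB^{(i)}$: since $b >_P y_\cgB \geq_P \max(\cgB^{(i)})$ and $b$ is not the base element, the equality case triggers and $\shad_i(b) = \shad_i(\max(\cgB^{(i)}))$. A short check using Proposition~\ref{pro:shadows-basic-props} (the sequence of common points of $\max(\cgB^{(i)})$ is a prefix of that of $y_\cgB$, and both sequences share exactly the same first $i$ reversing elements) gives $\shad_i(\max(\cgB^{(i)})) = \shad_i(y_\cgB)$. The same application to $u = a$ reduces to verifying $a \geq_P \max(\cgB^{(i)})$. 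Here Proposition~\ref{pro:uWLWR} is the crucial input: since $a \in \cgB \subseteq \cgB^{(i+1)}$, the point $\min(\cgB^{(i+1)})$ lies on $\Wleft(a)$, hence $\min(\cgB^{(i+1)}) \leq_P a$; along the chain of common points of $y_\cgB$ we have $\max(\cgB^{(i)}) \leq_P \min(\cgB^{(i+1)})$, and stringing these together gives $\max(\cgB^{(i)}) \leq_P a$. This completes (iii).

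The hard part is (iii), specifically obtaining the comparability $a \geq_P \max(\cgB^{(i)})$ at every level $i < j$. Introducing the nested terminal blocks $\cgB^{(0)} \supseteq \cdots \supseteq \cgB^{(j)} = \cgB$ is exactly the device that lets me invoke Proposition~\ref{pro:uWLWR} one level deeper at $\cgB^{(i+1)}$, which is what forces the shadows of $a$ and $y_\cgB$ to agree below level $j$.
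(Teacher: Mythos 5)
Your proof is correct and follows the same route as the paper's: establish that $\cgB$ is the terminal block of $\shad_j(b)$, that $a$ lies in the interior of $\cgB$, and that the depth of $(a,b)$ is $j$, then invoke Proposition~\ref{pro:geometry-inside}. The only difference is that where the paper justifies $\shad_i(a)=\shad_i(b)$ for all $i<j$ in a single line (via $x_{\cgB}$ being a common point of both $a$ and $b$, by Proposition~\ref{pro:uWLWR}), you unwind the same fact through the nested terminal blocks $\cgB^{(i)}$ using Proposition~\ref{pro:shadow-comp} at each level --- a more explicit but equivalent argument.
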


\begin{proof}
Since $y_{\cgB} < b$ in $P$ and $b$ lies in $\cgB$, 
we have that $\cgB$ is the terminal block of $\shad_j(b)$. 
Since $a$ is in the interior of $\cgB$, by Proposition~\ref{pro:geometry-inside} 
we get that $(a,b)$ is an inside pair. 
Recall that $x_{\cgB}$ is in the sequence of common points of both $a$ and $b$ 
(see Proposition~\ref{pro:path-in-block}), so $\shad_i(a)=\shad_i(b)$ 
for all $i\in\set{0,\ldots,j-1}$. 
We conclude that the depth of $(a,b)$ is $j$ and 
therefore the address of $(a,b)$ is $(j,\cgB)$.
\end{proof}

Let $\cgB$ be a shadow block, and let
$N$ be a path in $G$ (not necessarily a witnessing path) from $x_\cgB$ to $y_\cgB$ such that all edges
on $N$ belong to $\cgB$.
When $u$ is either a vertex or edge of $P$ that
is in $\cgB$ (always including the boundary),  we can classify $u$ uniquely as being (1)~on the path $N$;
(2)~left of $N$; or (3)~right of $N$, using the following scheme.
The meaning of the first of these three options is clear.  Now
suppose that $u$ is not on $N$. We will say that $u$ is \emph{left of $N$} 
if $u$ is in a region in the plane bounded by a cycle formed by two paths,
with one path a portion of 
$N$ and the other path a portion of the left side of $\cgB$.  Symmetrically, we
say that $u$ is \emph{right of $N$} if $u$ is in a region in the plane bounded 
by two paths, with one path a portion of $N$ and the other path a portion of the 
right side of $\cgB$.  We illustrate these conventions on the
left side of Figure~\ref{fig:N-left-right}.

\begin{figure}
  \centering
  \includegraphics[scale=.8]{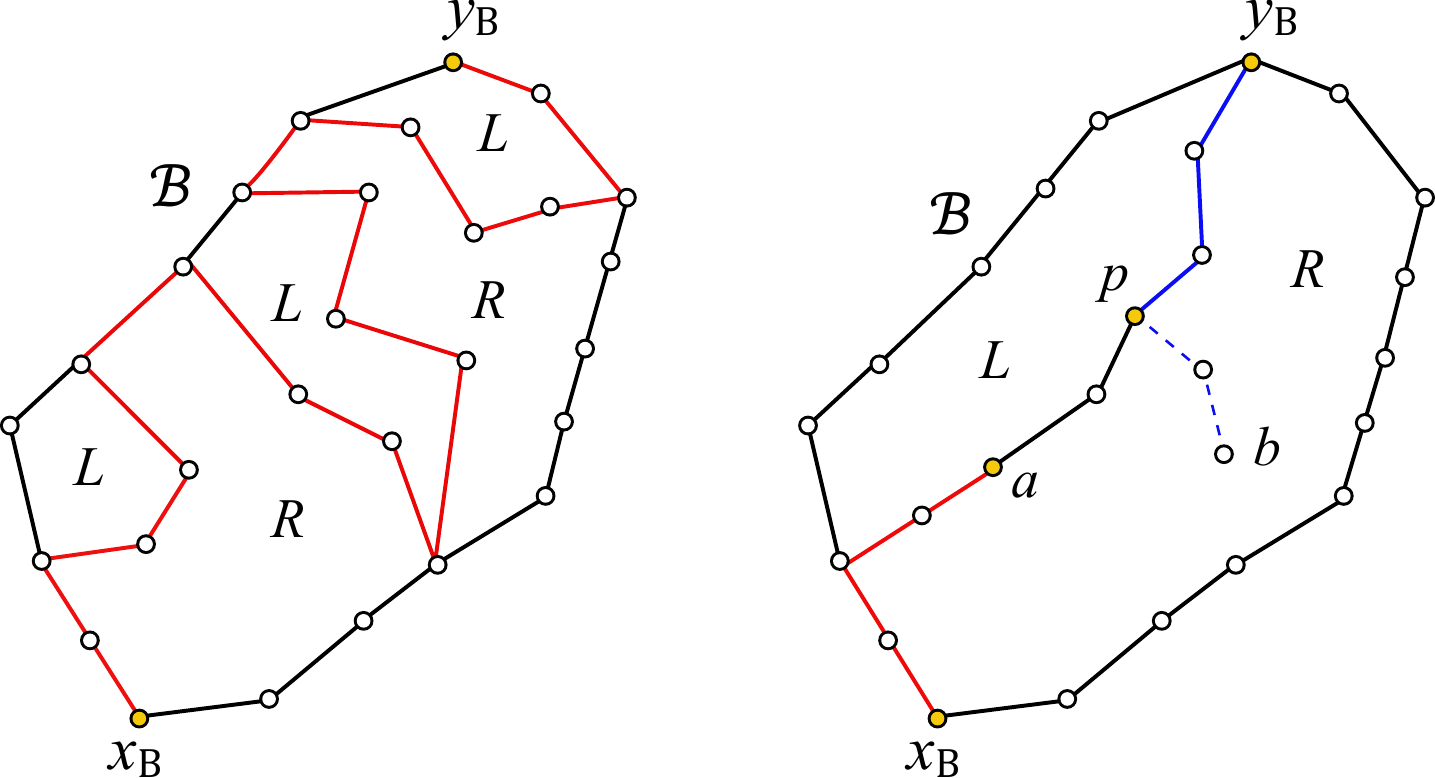}
  \caption{Left:  The red path $N$ from $x_\cgB$ to $y_\cgB$ splits
  the shadow block $\cgB$ into regions.  The regions marked $L$ are left of $N$,
  and the regions marked $R$ are right of $N$.  Note that each region is a cycle
  whose boundary consists of two paths, one a subpath of $N$, and the other a
  subpath of one of the two sides of the shadow block $\cgB$. 
  Right:  We show a separating path $N$ associated with the comparability $a<_P b$.
  The element $p$ is the peak of $N$, and $p\le_P b$.  When $b\neq p$, the element
  $b$ can be on either side of $N$.
  }
  \label{fig:N-left-right}
\end{figure}

Now let $\cgB$ be a shadow block, and let
$(a,b)\in A_\cgB\times B_\cgB$ with $a<_P b$. 
A path $N$ from $x_{\cgB}$ to $y_{\cgB}$ in $G$ 
is a \emph{separating path associated with $a<b$ in $P$} if
the following statements hold:
\begin{enumerate}
  \item $a$ is on $N$ and $x_\cgB Na$ is the suffix of $\Wleft(a)$ starting at $x_\cgB$.
  \item  There is an element $p$ of $B_\cgB$ that is on $N$ 
    such that $pNy_\cgB$ is the part of $\Wleft(b)$ from $y_{\cgB}$ to $p$ traversed backwards.
  \item $a<_P p$, and $aNp$ is a witnessing path from $a$ to $p$.
\end{enumerate}
The element $p$ referenced in this definition 
is called the \emph{peak} of $N$.
Note that $z\le p\le b$ in $P$, 
for every element $z$ of $P$ that is on the path $N$.
Note that all edges of $N$ are in $\cgB$ (by Proposition~\ref{pro:path-in-block}). 

Note also that when $\cgB$ is a shadow block and $(a,b)\in A_{\cgB}\times B_{\cgB}$ with $a<b$ in $P$, then there is always 
a separating path associated with $a< b$ in $P$.
The concept of a separating path associated
with an inequality $a<_P b$ is illustrated on the right side of Figure~\ref{fig:N-left-right}.

\begin{proposition}\label{pro:force-comp}
  Let $\cgB$ be a shadow block, let 
  $(a,b)\in A_\cgB\times B_\cgB$ with $a<_P b$, and let $N$ be a separating path 
  associated with $a<_P b$.  If $u$ and $v$ are
  elements of $P$ that belong to $\cgB$, $u<_P v$, $u$ is on one side of $N$,
  and $v$ is on the other side, then $u<_P b$.
\end{proposition}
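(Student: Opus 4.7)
The aim is to produce a vertex $z$ common to $N$ and some witnessing path from $u$ to $v$, and then check that $u<z\le b$ in $P$.

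First I would fix a witnessing path $W$ from $u$ to $v$. Since $u,v\in\cgB$, Proposition~\ref{pro:path-in-block} forces every edge of $W$ to lie in $\cgB$. The path $N$ also has all its edges in $\cgB$, and together with the two sides of $\cgB$ it subdivides $\cgB$ into regions, each bounded by a portion of $N$ and a portion of one side of $\cgB$. By hypothesis $u$ lies strictly on one side of $N$ and $v$ strictly on the other, so the arc representing $W$ in the planar drawing must cross $N$. Because $W$ and $N$ are subgraphs of the cover graph $G$ drawn without edge crossings, the crossing must occur at a common vertex $z$ of $W$ and $N$.

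Next I would combine two facts about $z$. Since $z$ lies on the witnessing path $W$ from $u$ to $v$, we have $u\le z\le v$ in $P$; and since $u$ is strictly off $N$ while $z\in N$, in fact $u<z$ in $P$. To bound $z$ from above, I would use the explicit description of $N$: it is the concatenation of $x_\cgB Na$ (a suffix of $\Wleft(a)$, whose vertices are at most $a$), $aNp$ (a witnessing path from $a$ to $p$, whose vertices lie between $a$ and $p$ in $P$), and $pNy_\cgB$ (a subpath of $\Wleft(b)$ from $y_\cgB$ to $p$ traversed backwards, whose vertices lie between $y_\cgB$ and $p$ in $P$). In all three cases, every vertex on $N$ is at most $p$ in $P$, and $p\le b$ by the definition of the peak. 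Hence $z\le p\le b$, and chaining gives $u<z\le b$ in $P$, i.e.\ $u<_P b$.

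\textbf{Main obstacle.} The only non-routine point is the planar-topology step that forces $W$ and $N$ to share a vertex rather than merely a geometric point; this is intuitively clear from the planar drawing but should be argued carefully from the fact that both $W$ and $N$ consist of edges of $G$ in a crossing-free embedding, so the regions defined by $N$ inside $\cgB$ are genuinely separated by $N$ at the level of the graph. After that, the proof is a routine bookkeeping check using the three pieces making up $N$.
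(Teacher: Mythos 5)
Your proposal is correct and follows essentially the same route as the paper: take a witnessing path $W$ from $u$ to $v$, invoke Proposition~\ref{pro:path-in-block} to keep $W$ inside $\cgB$, deduce a common vertex $z$ of $W$ and $N$ from planarity, and conclude $u<z\le p\le b$ in $P$. Your extra care about why the crossing occurs at a vertex and why every element of $N$ is below the peak $p$ only makes explicit what the paper leaves implicit.
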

\begin{proof}
  Let $W$ be a witnessing path from $u$ to $v$. 
  Proposition~\ref{pro:path-in-block} implies that 
  all vertices and edges of $W$ are in $\cgB$.
  Thus there must be
  an element  $z$ of $P$ common to $W$ and $N$.  If $p$ is the peak of $N$,
  this implies $u< z\le p\le b$ in $P$.  Therefore, $u< b$ in $P$.
\end{proof}
\begin{proposition}\label{pro:force-sides}
  Let $\cgB$ be a shadow block, let 
  $(a,b)\in A_\cgB\times B_\cgB$ with $a<_P b$, and let $N$ be a separating path 
  associated with $a<_P b$.  If  $(a',b')\in A_\cgB\times B_\cgB$, then the
  following statements hold:
  \begin{enumerate}
    \item If $(a',a)$ is a left pair, then $a'$ is left of $N$.
    \label{pro:item:a'-left}
    \item If $(a',a)$ is a right pair, then $a'$ is right of $N$.
    \label{pro:item:a'-right}
    \item If $(b',b)$ is a left pair, and $b'\parallel_P a$, then $b'$ is right of $N$.
    \label{pro:item:b'-left}
    \item If $(b',b)$ is a right pair, and $b'\parallel_P a$, then $b'$ is left of $N$.
    \label{pro:item:b'-right}
\end{enumerate}
\end{proposition}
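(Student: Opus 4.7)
The plan is a split-point analysis of leftmost (resp.\ rightmost) witnessing paths, with statements (i) and (ii) proved by symmetric arguments, and (iii), (iv) requiring a more delicate variant.

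For (i), I would take $z$ to be the last common vertex of $\Wleft(a')$ and $\Wleft(a)$. Both paths contain $x_\cgB$ by Proposition~\ref{pro:uWLWR}, so $z$ lies on $x_\cgB N a = x_\cgB\Wleft(a)a$; and since $(a',a)$ being a left pair forces $a \parallel_P a'$ by Proposition~\ref{pro:details-on-LR}, we have $z \neq a$. At $z$, the next edge of $\Wleft(a')$ branches strictly left of $\Wleft(a)$'s next edge, which coincides with $N$'s forward edge. This branch cannot exit $\cgB$, since Proposition~\ref{pro:path-in-block} forces $x_\cgB\Wleft(a')a'$ to remain in $\cgB$; hence it enters the left region of $N$. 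Tracing $\Wleft(a')$ onward to $a'$, I would rule out crossings of $N$ by cases: a crossing on $x_\cgB N a$ past $z$ violates $x_0$-consistency; a crossing on $aNp$ yields $a\le a'$, contradicting $a\parallel_P a'$; a crossing on $pNy_\cgB$ yields $y_\cgB\le a'$, contradicting $a'\parallel_P y_\cgB$ (from $a'\in A_\cgB$). This places $a'$ in the left region, proving (i). Statement (ii) then follows by the symmetric argument using $\Wright$ throughout.

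For (iii), and symmetrically (iv), the first step is a preliminary: since $b,b'\in B_\cgB$ satisfy $y_\cgB<_P b,b'$, Proposition~\ref{pro:shortcuts} combined with the Proposition~\ref{pro:path-in-block}-based exterior-exit argument from (i) yields that $\Wleft(y_\cgB)$ is a prefix of both $\Wleft(b)$ and $\Wleft(b')$ (and analogously $\Wright(y_\cgB)$ is a prefix of $\Wright(b)$ and $\Wright(b')$). Consequently, the split point $z$ of $\Wleft(b')$ with $\Wleft(b)$ lies at $y_\cgB$ or later, on the portion of $\Wleft(b)$ running from $y_\cgB$ through $p$ to $b$. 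The crucial geometric observation is that $\Wleft(b)$ traverses the $pNy_\cgB$ segment in the direction \emph{opposite} to $N$, so when $z$ lies on this segment, the branching edge of $\Wleft(b')$ (being left of $\Wleft(b)$'s forward direction) lies on the \emph{right} side of $N$ at $z$. Tracing $\Wleft(b')$ to $b'$, crossings with $N$ yield contradictions analogous to those in (i): on $x_\cgB N a$, one gets $y_\cgB\le a$, contradicting $a\parallel_P y_\cgB$; on $aNp$, one gets $a\le b'$, contradicting $b'\parallel_P a$; a crossing on $pNy_\cgB$ past $z$ violates $x_0$-consistency. Statement (iv) follows symmetrically with $\Wright$.

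The main obstacle is the subcase in (iii) (and (iv)) where $z$ lies strictly past the peak $p$ on $\Wleft(b)$: here $z$ is no longer on $N$, so the direction-reversal observation on $pNy_\cgB$ does not apply directly. Resolving this case requires combining the $b'\parallel_P a$ hypothesis with the leftmost-path optimality of $\Wleft(b)$ past $p$ to place the branching in the right region of $N$, and is the most delicate piece of the proof.
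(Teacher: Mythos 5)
Your plan for statements (i) and (ii) is essentially the paper's proof: you take the split point of $\Wleft(a')$ and $\Wleft(a)$, use the left-pair hypothesis to place the branching edge locally left of $N$ (with Proposition~\ref{pro:path-in-block} keeping everything inside $\cgB$), and rule out crossings by exactly the paper's three cases ($x_0$-consistency on the common prefix, $a<_P a'$ from a crossing on $aNp$, and $y_\cgB\le_P a'$ from a crossing on $pNy_\cgB$). Statements (iii) and (iv) also follow the paper's route — split point at or after $y_\cgB$, the direction-reversal of $\Wleft(b)$ along $pNy_\cgB$ placing the branch on the right of $N$, and the analogous crossing analysis — with one exception.

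The exception is the subcase you flag as ``the most delicate piece'' and leave unresolved: the split point $z$ of $\Wleft(b')$ and $\Wleft(b)$ lying at or strictly past the peak $p$. This case simply cannot occur, and the reason is a one-line comparability chain using nothing beyond the hypothesis $b'\parallel_P a$ — no ``leftmost-path optimality of $\Wleft(b)$ past $p$'' is needed. The split point $z$ lies on $\Wleft(b')$, so $z\le_P b'$. If $z$ were at or past $p$ on $\Wleft(b)$, then $p\le_P z$, and since $a<_P p$ (because $aNp$ is a witnessing path and $a\neq p$), we would get $a<_P p\le_P z\le_P b'$, contradicting $b'\parallel_P a$. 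Hence $z<_P p$ and $z$ always lies strictly below $p$ on the segment $pNy_\cgB$, which is exactly where your direction-reversal observation applies. This is how the paper dispatches it (``Also $w<p$ in $P$ as otherwise $a<p\le w\le b'$ in $P$, which is false''). With that sentence inserted your argument for (iii) and (iv) closes; as written, it is incomplete at precisely this step.
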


\begin{proof} 
  The arguments for statements~\ref{pro:item:a'-left} and~\ref{pro:item:a'-right} are symmetric, so we only prove statement~\ref{pro:item:a'-left}.
  Let $w$ be the largest point of $P$ common to 
  $x_{\cgB}\Wleft(a')a'$ and $x_{\cgB}\Wleft(a)a$. 
  Since $(a',a)$ is a left pair, it is in particular an incomparable pair in $P$ (see Proposition~\ref{pro:details-on-LR}).
  Thus, $w<\set{a,a'}$ in $P$.
  Let $e_0$ be the edge common to $\Wleft(a')$ and $\Wleft(a)$ that is immediately before $w$ 
  (and in the case $w=x_{0}$ let $e_0=e_{-\infty}$).
  Let $e'$ and $e$ be the edges immediately after $w$ on $\Wleft(a')$ and
  $\Wleft(a)$, respectively. 
  Since $(a',a)$ is a left pair, we conclude that 
  $e'$ is left of $e$ in the $(w,e_0)$-ordering. 
  Since $e$ is on $N$ and $e_0$ is also on $N$ 
  (except the case that $w=x_{\cgB}$ and $e_0$ is not in $\cgB$), 
  we conclude that $e'$ is left of $N$.
  
  Now we assume that $a'$ is not left of $N$ and
  argue to a contradiction. 
  This requires that there is a vertex $v$ with
  $v\neq w$ common to $w\Wleft(a')a'$ and $N$. 
  The choice of $w$ implies
  $v\not\in wNa = w\Wleft(a)a$. 
  Now suppose that $v\in aNp$ where $p$ is the peak of $N$.  Then
  $a < v\le_P a'$ which is a contradiction as $a\parallel a'$ in $P$. 
  The last possibility is that $v\in pNy_{\cgB}$. 
  Now we have $y_{\cgB}\le v\le a'$ in $P$, so $y_{\cgB}\leq a'$ in $P$,
  which is contradicts the fact that $a'\in A_{\cgB}$. 
  The contradiction completes the proof of~\ref{pro:item:a'-left}.

  The proofs of statements~\ref{pro:item:b'-left} and~\ref{pro:item:b'-right} are symmetric, so we only prove statement~\ref{pro:item:b'-left}.  
  Let $w$ be the largest element of $P$ common to 
  $y_\cgB \Wleft(b)b$ and $y_\cgB \Wleft(b')b'$.  
  Since
  $b'\parallel_P b$, we must have $w<_P \set{b,b'}$. 
  Also $w< p$ in $P$ as otherwise $a< p\leq w \leq b'$ in $P$, which
  is false. 
  Now we conclude (like in the paragraph before) 
  that the edge of $\Wleft(b')$ that is immediately
  after $w$ is right of $N$.  
  Next we assume that $b'$ is not right of $N$ and
  argue to a contradiction.  Now we must have an element $v$ of $P$ with
  $v\neq w$ such that $v$ is common to $w\Wleft(b')b'$ and $N$.  The definition
  of $w$ implies that $v\not\in pNy_{\cgB}$.  
  If $v\in aNp$, then $a\le v\le b'$ in $P$.
  This implies $a<_P b'$, which is false.  It follows that
  $v\in x_\cgB Na$.  Now we have $y_\cgB< v\leq a$ in $P$.  
  This implies $y_\cgB<_P a$, which is
  false. The contradiction completes the proof of~\ref{pro:item:b'-left}. 
\end{proof}

\begin{proposition}\label{pro:block-details}
  Let $\cgB$ be a shadow block, and let $j=\sd(y_\cgB)$.  If
  $a,a'\in A_\cgB$,  and $b,b'\in B_\cgB$, then the following statements hold:
  \begin{enumerate}
    \item $b\not\in\shad_j(a)$.
    \label{pro:item:b-not-in-shad-a}
    \item $a\not\in\shad_{j+1}(b)$.
    \label{pro:item:a-not-in-shad-b}
    \item If $a\parallel_P a'$, and $a\in\shad_j(a')$, then $a\parallel_P b$.
    \label{pro:item:a-in-shad-a'}
    \item If $\sd(a)>j$ then $a\parallel_P b$.
    \label{pro:item:sda-more-than-j}
    \item If $b\in\shad_{j+1}(b')$, and $a<_P b$, then
      $a<_P b'$.
    \label{pro:item:b-in-shad-b'}
    \item If $\shad_{j+1}(b)=\shad_{j+1}(b')$, 
      then $a<_P b$ if and only if $a<_P b'$.
    \label{pro:item:b-and-b'-same-shadow}
  \end{enumerate}
\end{proposition}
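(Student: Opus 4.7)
My plan is to prove the six statements in turn, leaning on Propositions~\ref{pro:uWLWR}, \ref{pro:path-in-block}, \ref{pro:shadow-comp}, and \ref{pro:shadows-basic-props}, and on the two facts $a\parallel_P y_\cgB$ and $y_\cgB<_P b$.  For~\ref{pro:item:b-not-in-shad-a}, I will sandwich shadows between inclusions.  Suppose $b$ lies in a block $\cgB''$ of $\shad_j(a)$.  Proposition~\ref{pro:shadow-comp} gives $\shad_j(b)=\shad_j(y_\cgB)$ (applied to $b\in\cgB$ with $b>_P y_\cgB$), $\shad_j(b)\subseteq\shad_j(\max\cgB'')$ (applied to $b\in\cgB''$ inside $\shad_j(a)$), and $\shad_j(a)\subsetneq\shad_j(y_\cgB)$ (applied to $a\in\cgB$ with $a\parallel_P y_\cgB$); Proposition~\ref{pro:shadows-basic-props} gives $\shad_j(\max\cgB'')\subseteq\shad_j(a)$.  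Chaining yields the contradiction $\shad_j(y_\cgB)\subseteq\shad_j(a)\subsetneq\shad_j(y_\cgB)$.  For~\ref{pro:item:a-not-in-shad-b}, note that $\shad_{j+1}(b)$ exists because $y_\cgB$ is a reversing element of $b$'s common-point sequence (since $b$ lies in the interior of $\cgB$) and its base is $y_\cgB$.  If $a$ lies in a block $\cgB^{**}$ of $\shad_{j+1}(b)$, then $\min\cgB^{**}\ge_P y_\cgB$ (the block minima form a chain starting at the base) and $\min\cgB^{**}\le_P a$ (by Proposition~\ref{pro:uWLWR}), giving $y_\cgB\le_P a$, contradicting $a\parallel_P y_\cgB$.

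Items~\ref{pro:item:a-in-shad-a'} and~\ref{pro:item:sda-more-than-j} reduce to~\ref{pro:item:b-not-in-shad-a} via a witnessing-path exit argument.  In each, the alternative $a>_P b$ is ruled out as it would force $a>_P y_\cgB$, so I assume $a<_P b$.  For~\ref{pro:item:a-in-shad-a'}, let $\cgB^\star$ be the block of $\shad_j(a')$ containing $a$ (which must hold $a$ in its interior, since $a\parallel_P a'$ rules out $a$ being on the boundary); the $a'$-analogue of~\ref{pro:item:b-not-in-shad-a} gives $b\notin\cgB^\star$, so a witnessing path from $a$ to $b$ must exit $\cgB^\star$ at a vertex $w$ with $a<_P w\le_P\max\cgB^\star\le_P a'$, contradicting $a\parallel_P a'$.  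For~\ref{pro:item:sda-more-than-j}, take $\cgB^\star$ to be the terminal block of $\shad_j(a)$, which exists because $\sd(a)>j$ and has $a$ in its interior; the same exit argument gives $a<_P w\le_P\max\cgB^\star\le_P a$, an immediate contradiction.  Items~\ref{pro:item:b-in-shad-b'} and~\ref{pro:item:b-and-b'-same-shadow} are the symmetric reductions to~\ref{pro:item:a-not-in-shad-b}: for~\ref{pro:item:b-in-shad-b'}, the block $\cgB^{**}$ of $\shad_{j+1}(b')$ containing $b$ avoids $a$ by~\ref{pro:item:a-not-in-shad-b}, so a witnessing path from $a$ to $b$ enters $\cgB^{**}$ at some $w$ with $a<_P w\le_P\max\cgB^{**}\le_P b'$; then~\ref{pro:item:b-and-b'-same-shadow} follows by applying~\ref{pro:item:b-in-shad-b'} in both directions, using $b\in\shad_{j+1}(b')$ and $b'\in\shad_{j+1}(b)$.

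The main technicality will be the exit step: justifying that a witnessing path with one endpoint in the interior of a shadow block $\cgB^\star$ and the other outside $\cgB^\star$ must meet the boundary of $\cgB^\star$ at a vertex $w$, with $w\le_P\max\cgB^\star$.  The existence of $w$ is a consequence of the planar embedding: no graph edge can cross a boundary edge, so the path cannot jump from the strict interior of $\cgB^\star$ to its strict exterior without using a boundary vertex (equivalently, this is the contrapositive of Proposition~\ref{pro:path-in-block}).  The bound $w\le_P\max\cgB^\star$ comes from the definition of a side of a shadow block as a witnessing path from $\min$ to $\max$.  It then remains routine to verify that Proposition~\ref{pro:shadow-comp} applies wherever invoked: the referenced element is never the base of the shadow under consideration, since $b\neq x_\cgB$ (as $b>_P y_\cgB$) and $a\neq y_\cgB$ (as $a\parallel_P y_\cgB$).
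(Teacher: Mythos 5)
Your proof is correct. Items~\ref{pro:item:a-in-shad-a'}--\ref{pro:item:b-and-b'-same-shadow} follow the paper's argument almost verbatim: in each case the alternative comparability is killed by forcing a witnessing path to cross the boundary of a region all of whose elements are bounded above by the ``wrong'' element (you cross the boundary of a single block $\cgB^\star$ or $\cgB^{**}$ where the paper crosses the boundary of the whole shadow $\shad_j(a')$, $\shad_j(a)$ or $\shad_{j+1}(b')$, but since every boundary element of a block of $\shad_j(z)$ is below the block's max, which is a common point of $z$, the two are interchangeable). Where you genuinely diverge is in items~\ref{pro:item:b-not-in-shad-a} and~\ref{pro:item:a-not-in-shad-b}. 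For~\ref{pro:item:b-not-in-shad-a} the paper runs a witnessing path from $y_\cgB$ to $b$ and crosses the boundary of $\shad_j(a)$ to get $y_\cgB<z\le a$; you instead chain the monotonicity statements of Propositions~\ref{pro:shadow-comp} and~\ref{pro:shadows-basic-props} into $\shad_j(y_\cgB)=\shad_j(b)\subseteq\shad_j(\max\cgB'')\subseteq\shad_j(a)\subsetneq\shad_j(y_\cgB)$, which avoids any planarity/crossing argument entirely. For~\ref{pro:item:a-not-in-shad-b} the paper again crosses a boundary with a path from $x_\cgB$ to $a$; you observe directly that the base of $\shad_{j+1}(b)$ is $y_\cgB$, that block minima of a shadow sit above its base, and that $\min\cgB^{**}\le_P a$ by Proposition~\ref{pro:uWLWR}, which is shorter and purely order-theoretic. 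Both of your alternatives land on the same contradiction ($y_\cgB\le_P a$ versus $a\parallel_P y_\cgB$) and are sound; the one cosmetic slip is your closing parenthetical identifying $b\neq x_\cgB$ as the reason $b$ is not the base of $\shad_j(a)$ --- the relevant fact is that the base of $\shad_j(a)$ is below $a$ in $P$ while $b>_P y_\cgB$ forbids $b\le_P a$ --- but the needed conclusion holds.
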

\begin{proof}
  For the proof of the statement~\ref{pro:item:b-not-in-shad-a}, 
  suppose to the contrary that $b\in\shad_j(a)$.
  Since $a\parallel_P y_\cgB$, 
  Proposition~\ref{pro:shadow-comp} implies $y_\cgB\not\in \shad_j(a)$. 
  Let $W$ be a 
  witnessing path from $y_\cgB$ to $b$. 
  Since $y_{\cgB}\not\in\shad_j(a)$ and $b\in\shad_j(a)$,
  the path $W$ must contain an element $z$ of $P$ such that
  $z$ is also on the boundary of $\shad_j(a)$. 
  This implies $y_{\cgB}< z\le a$ in $P$, and
  clearly this statement contradicts $a\in A_{\cgB}$. 
  The contradiction completes the proof of statement~\ref{pro:item:b-not-in-shad-a}.

  For the proof of statement~\ref{pro:item:a-not-in-shad-b}, 
  suppose to the contrary that $a\in\shad_{j+1}(b)$.  
  Note that $x_{\cgB}\not\in\shad_{j+1}(b)$ and $x_\cgB<_P a$.  
  This implies that a witnessing
  path $W$ from $x_\cgB$ to $a$ contains 
  an element $z$ of $P$ that is also on the boundary of
  $\shad_{j+1}(b)$.  This implies $y_{\cgB}< z \le a$ in $P$, 
  which contradicts that $a\in A_{\cgB}$.
  The contradiction completes the proof of 
  statement~\ref{pro:item:a-not-in-shad-b}.

  For the proof of statement~\ref{pro:item:a-in-shad-a'}, suppose that $a\parallel_P a'$,
  $a\in\shad_j(a')$, and $a<_P b$. 
  By~\ref{pro:item:b-not-in-shad-a} we have $b\not\in\shad_j(a')$ and 
  therefore
  a witnessing path from $a$ to
  $b$ must contain a point $z$ from the boundary of $\shad_j(a')$.
  This implies $a< z\le a'$ in $P$, which is false.  
  The contradiction completes the proof of statement~\ref{pro:item:a-in-shad-a'}.

  For the proof of statement~\ref{pro:item:sda-more-than-j}, 
  suppose that $\sd(a)>j$ and $a<b$ in $P$.
  Therefore 
  $a$ is in the interior of $\shad_j(a)$ and by statement~\ref{pro:item:b-not-in-shad-a}, we have  
  $b$ is not in $\shad_j(a)$. 
  Let $W$ be a witnessing path from $a$ to $b$. 
  There must be an element $z$ of $W$ that is on the boundary of $\shad_j(a)$. This is a clear contradiction as all elements on the boundary of $\shad_j(a)$ are below $a$ in $P$.

  For the proof of statement~\ref{pro:item:b-in-shad-b'}, suppose that 
  $b\in\shad_{j+1}(b')$, and $a<_P b$.  
  By~\ref{pro:item:a-not-in-shad-b} we have that 
  $a\not\in\shad_{j+1}(b')$.
  A witnessing path from $a$ to $b$ 
  must contain an element $z$ of $P$ that
  is on the boundary of $\shad_{j+1}(b')$.
  This implies $a< z\le b'$ in $P$. 
  This completes the proof of statement~\ref{pro:item:b-in-shad-b'}.

  Statement~\ref{pro:item:b-and-b'-same-shadow} follows immediately 
  from~\ref{pro:item:b-in-shad-b'} as 
  $\shad_{j+1}(b)=\shad_{j+1}(b')$ implies $b\in\shad_{j+1}(b')$ and 
  $b'\in\shad_{j+1}(b)$.

\end{proof}

\begin{proposition}\label{pro:sac-comprehensive}
  Let $\theta\in\{0,1\}$ and let $((a_1,b_1),\dots,(a_k,b_k))$ be a 
  strict alternating cycle of pairs from $I_\theta$.  Let $(j,\cgB)$ be
  the common address of the pairs on the cycle.
  If $\alpha$ and $\beta$ are distinct integers in $[k]$,
  then the following statements hold:
  \begin{enumerate}
    \item $a_\alpha\not\in\shad_j(a_\beta)$.
    \label{pro:item-a-not-in-shadow-a}
    \item $b_\alpha\not\in\shad_{j+1}(b_\beta)$.
    \label{pro:item-b-not-in-shadow-b}
    \item $(a_\alpha,a_\beta)$ is either a left pair or a right pair.
    \label{pro:item-a-left-or-right}
    \item $(b_\alpha,b_\beta)$ is either a left pair or a right pair.
    \label{pro:item-b-left-or-right}
    \item $(a_\alpha,a_\beta)$ is a left pair if and only if 
      $(b_{\alpha+1},b_{\beta+1})$ is a right pair.
    \label{pro:item-a-left-iff-b-right}     
  \end{enumerate}
\end{proposition}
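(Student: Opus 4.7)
The plan is to prove statements (1) and (2) directly from Proposition~\ref{pro:block-details}, bootstrap them to (3) and (4) via Proposition~\ref{pro:geometry-inside}, and handle (5) using the separating-path machinery of Propositions~\ref{pro:force-comp} and~\ref{pro:force-sides}. Throughout, Lemma~\ref{lem:cgI-comprehensive} provides $a_\gamma \in A_\cgB$ and $b_\gamma \in B_\cgB$ for every $\gamma \in [k]$, together with $\sd(a_\gamma) = j$ and $\sd(b_\gamma) > j$.

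For (1), I would apply statement~\ref{pro:item:a-in-shad-a'} of Proposition~\ref{pro:block-details} with $(a,a',b) = (a_\alpha, a_\beta, b_{\alpha+1})$. Since $a_\alpha \parallel_P a_\beta$ (distinct antichain elements of the strict alternating cycle), assuming $a_\alpha \in \shad_j(a_\beta)$ would force $a_\alpha \parallel_P b_{\alpha+1}$, contradicting $a_\alpha <_P b_{\alpha+1}$ (strictness holds because $b_{\alpha+1} >_P y_\cgB \parallel_P a_\alpha$). Statement (2) follows similarly from statement~\ref{pro:item:b-in-shad-b'} of Proposition~\ref{pro:block-details} with $(a,b,b') = (a_{\alpha-1}, b_\alpha, b_\beta)$: the derived inequality $a_{\alpha-1} <_P b_\beta$ conflicts with the strict-alternating-cycle condition, which allows $a_{\alpha-1} <_P b_\gamma$ only for $\gamma = \alpha$.

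For (3), the plan is to compute the depth of $(a_\alpha, a_\beta)$ and then invoke Proposition~\ref{pro:geometry-inside}. Since $a_\alpha$ and $a_\beta$ both lie in the interior of $\cgB$, Proposition~\ref{pro:uWLWR} places $x_\cgB$ on all four extremal paths $\Wleft(a_\alpha), \Wright(a_\alpha), \Wleft(a_\beta), \Wright(a_\beta)$, so their sequences of common points share the prefix of $y_\cgB$'s sequence. Combined with $\sd(a_\alpha) = \sd(a_\beta) = j$, this forces $\shad_i(a_\alpha) = \shad_i(a_\beta) = \shad_i(y_\cgB)$ for every $i<j$, while the two depth-$j$ shadows differ (their terminals $a_\alpha, a_\beta$ are distinct). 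Hence the depth of $(a_\alpha, a_\beta)$ is exactly $j$, and Proposition~\ref{pro:geometry-inside} together with (1) rules out ``inside''; applying (1) with the roles of $\alpha$ and $\beta$ swapped rules out ``outside''. Statement (4) follows by the parallel argument: by Lemma~\ref{lem:cgI-comprehensive}, $\shad_j(b_\alpha) = \shad_j(b_\beta)$ (both coincide with $\shad_j(y_\cgB)$, having $\cgB$ as terminal block), so the depth of $(b_\alpha, b_\beta)$ exceeds $j$; if the pair were inside with address $(j', \cgB')$, then $j' \ge j+1$ would place $b_\alpha \in \shad_{j'}(b_\beta) \subseteq \shad_{j+1}(b_\beta)$, contradicting (2), and outside is similarly eliminated using (2) with swapped roles.

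Statement (5) is the main obstacle. Assume $(a_\alpha, a_\beta)$ is a left pair and fix a separating path $N$ associated with $a_\alpha <_P b_{\alpha+1}$, with peak $p$. Statement~\ref{pro:item:a'-right} of Proposition~\ref{pro:force-sides} places $a_\beta$ on the right of $N$. I would then show $b_{\beta+1}$ is also right of $N$ in two steps: Proposition~\ref{pro:force-comp} together with $a_\beta <_P b_{\beta+1}$ rules out $b_{\beta+1}$ being left of $N$ (otherwise $a_\beta <_P b_{\alpha+1}$, forcing $\alpha = \beta$ by the strict alternating cycle), and a case analysis on the three segments of $N$---the prefix of $\Wleft(a_\alpha)$ from $x_\cgB$ to $a_\alpha$, the witnessing path from $a_\alpha$ to $p$, and the reversed part of $\Wleft(b_{\alpha+1})$ from $p$ down to $y_\cgB$---rules out $b_{\beta+1} \in N$ using the incomparabilities of $b_{\beta+1}$ with $y_\cgB$, $a_\alpha$, and $b_{\alpha+1}$ (combined with the antichain structure among the $b_\gamma$). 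Finally, statement~\ref{pro:item:b'-right} of Proposition~\ref{pro:force-sides} applied to $(b_{\beta+1}, b_{\alpha+1})$ (noting $b_{\beta+1} \parallel_P a_\alpha$) would demand $b_{\beta+1}$ left of $N$ if this pair were right; since $b_{\beta+1}$ is in fact right of $N$, the pair is not right, and by (4) it is left---equivalently, $(b_{\alpha+1}, b_{\beta+1})$ is right. The converse direction follows by exchanging $\alpha$ and $\beta$. The delicate point is the three-segment case analysis forcing $b_{\beta+1} \notin N$, which requires simultaneously tracking how $b_{\beta+1}$ relates to $y_\cgB$, $a_\alpha$, $p$, and $b_{\alpha+1}$ under the strict-alternating-cycle constraints.
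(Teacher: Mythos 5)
Your proposal is correct and follows the paper's proof in all essentials: statements (i) and (ii) via Proposition~\ref{pro:block-details} plus strictness, (iii)--(iv) via the depth computation and Proposition~\ref{pro:geometry-inside}, and (v) via the separating-path machinery of Propositions~\ref{pro:force-sides} and~\ref{pro:force-comp}. The only deviation is in (v), where you take $N$ associated with $a_\alpha<_P b_{\alpha+1}$ rather than $a_\beta<_P b_{\beta+1}$ and run the mirror-image deduction; note that the ``delicate'' step of excluding $b_{\beta+1}\in N$ is actually immediate, since every element of a separating path lies below its peak and hence below $b_{\alpha+1}$, while $b_{\beta+1}\parallel_P b_{\alpha+1}$ by strictness.
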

\begin{proof}
  We note that Lemma~\ref{lem:cgI-comprehensive} implies
  $a_\alpha\in A_\cgB$ and $b_\alpha\in B_\cgB$, for all
  $\alpha\in[k]$.  Also the strictness of the alternating cycle implies
   that $\{a_1,\dots,a_k\}$ and $\{b_1,\dots,b_k\}$ 
   are $k$-element antichains in $P$.

  For the proof of statement~\ref{pro:item-a-not-in-shadow-a}, 
  since $a_\alpha\parallel a_\beta$,
  and $a_\alpha< b_{\alpha+1}$ in $P$, 
  Proposition~\ref{pro:block-details}.\ref{pro:item:a-in-shad-a'} 
  implies $a_\alpha\not\in\shad_j(a_\beta)$.

  For the proof of statement~\ref{pro:item-b-not-in-shadow-b}, 
  since $a_{\alpha-1}< b_\alpha$, 
  and $a_{\alpha-1}\parallel b_\beta$ in $P$,
  Proposition~\ref{pro:block-details}.\ref{pro:item:b-in-shad-b'} 
  implies $b_\alpha\not\in\shad_{j+1}(b_\beta)$.
  
  For the proof of statement~\ref{pro:item-a-left-or-right}, 
  recall that $(a_{\alpha},a_{\beta})$ is either left, or right, 
  or inside, or outside. 
  Recall that the $i$-shadows of $a_{\alpha}$ and $a_{\beta}$ are the same 
  for all $0\leq i<j$, and 
  by statement~\ref{pro:item-a-not-in-shadow-a} we have 
  $a_{\alpha}\not\in\shad_j(a_\beta)$ and 
  $a_{\beta}\not\in\shad_j(a_\alpha)$. 
  Now Proposition~\ref{pro:geometry-inside} 
  implies that $(a_{\alpha},a_{\beta})$ is not an inside pair and 
  $(a_{\beta},a_{\alpha})$ is not an inside pair. 
  This forces $(a_{\alpha},a_{\beta})$ to be a left pair or a right pair, 
  as desired. 
  Statement~\ref{pro:item-b-left-or-right} follows along the same lines.

  For the proof of statement~\ref{pro:item-a-left-iff-b-right}, 
  we may assume, without loss of generality that
  $(a_\alpha,a_\beta)$ is a left pair.
  Let $N$ be a separating path in the block $\cgB$ associated with 
  $a_\beta< b_{\beta+1}$ in $P$. 
  Then Proposition~\ref{pro:force-sides} implies 
  that $a_\alpha$ is left of $N$.

  We already know that 
  $(b_{\alpha+1},b_{\beta+1})$ is either a left pair or
  a right pair.  Suppose that it is a left pair.  
  Since $b_{\alpha+1}\parallel a_\beta$ in $P$, 
  Proposition~\ref{pro:force-sides} implies that $b_{\alpha+1}$ is
  right of $N$. 
  Now we have $a_\alpha$ left of $N$, $b_{\alpha+1}$ right
  of $N$, and $a_\alpha< b_{\alpha+1}$ in $P$. 
  Proposition~\ref{pro:force-comp}
  now implies $a_\alpha< b_{\beta+1}$ in $P$, 
  but this is false.  We conclude
  that $(b_{\alpha+1},b_{\beta+1})$ is a right pair, as desired.
  With this observation, the proof of statement~\ref{pro:item-a-left-iff-b-right} is complete.
\end{proof}

The next statement is just a special case of the preceding proposition, but 
it deserves to be highlighted.
\begin{corollary}\label{cor:sac-2}
  Let $\theta\in\{0,1\}$ and let $((a_1,b_1),(a_2,b_2))$ be a 
  strict alternating cycle of inside pairs from $I_\theta$. 
  Then  $(a_1,a_2)$ is a left pair if and only if $(b_1,b_2)$ is a left pair.
\end{corollary}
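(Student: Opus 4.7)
The plan is to derive the corollary directly from Proposition~\ref{pro:sac-comprehensive}, taking the specific case $k=2$ and then translating between the ``left'' and ``right'' conditions that appear in statement~\ref{pro:item-a-left-iff-b-right}.

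First I would unwind the definitions of left and right pairs to record the trivial symmetry: $(u,v)$ is a left pair in $P$ if and only if $(v,u)$ is a right pair in $P$. This is immediate because ``$\Wleft(u)$ is $x_0$-left of $\Wleft(v)$'' and ``$\Wleft(v)$ is $x_0$-right of $\Wleft(u)$'' are the same statement (and analogously for $\Wright$).

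Next I would invoke Proposition~\ref{pro:sac-comprehensive}.\ref{pro:item-a-left-iff-b-right} with $k=2$, $\alpha=1$, $\beta=2$. Since the cyclic successor of $2$ is $1$, that statement specializes to: $(a_1,a_2)$ is a left pair if and only if $(b_2,b_1)$ is a right pair. Combining this equivalence with the symmetry recorded above, $(b_2,b_1)$ is a right pair if and only if $(b_1,b_2)$ is a left pair. Chaining the two equivalences yields exactly the conclusion of the corollary.

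The only mild subtlety is ensuring that Proposition~\ref{pro:sac-comprehensive} applies, i.e.\ that its hypotheses are inherited from those of the corollary. This is immediate: a strict alternating cycle of two inside pairs in $I_\theta$ is precisely the $k=2$ case treated there, so statements~\ref{pro:item-a-left-or-right} and~\ref{pro:item-b-left-or-right} already guarantee that both $(a_1,a_2)$ and $(b_1,b_2)$ are either left or right pairs, and statement~\ref{pro:item-a-left-iff-b-right} supplies the required equivalence. There is no real obstacle; the content of the corollary is entirely contained in the proposition once the left/right reversal symmetry is noted.
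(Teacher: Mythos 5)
Your proposal is correct and matches the paper exactly: the paper offers no separate proof, stating only that the corollary ``is just a special case of the preceding proposition,'' and your derivation (instantiating Proposition~\ref{pro:sac-comprehensive}.\ref{pro:item-a-left-iff-b-right} with $k=2$ and using that $(u,v)$ is a left pair iff $(v,u)$ is a right pair) is precisely that special case spelled out.
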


With these observations, we have reached the end of the common part of the
proofs of our two main theorems.
\section{Boolean Dimension is Bounded}\label{sec:bdim}
In this section, we give the second part of the proof showing that
if $P$ is a poset with a unique
minimal element and a planar cover graph, then $\bdim(P)\le 13$.
The initial setup is the same as in the last section.  We assume
that $P$ is a poset with a planar cover graph and a unique minimal denoted
$x_0$.  We fix a plane drawing of the cover graph $G$ of $P$ with
$x_0$ on the exterior face.  Ultimately, we will show that
$\bdim(P)\le 13$ by constructing a Boolean 
realizer $(L_1,\dots,L_{13})$ with an appropriate $13$-ary Boolean formula.

Let $\theta\in\{0,1\}$.  Recall that $I_\theta$ denotes the set of all
inside pairs $(a,b)$ such that if $(j,\cgB)$ is the address of
$(a,b)$, then $j\equiv\theta\mod2$.   We then define the following two
sets:
\begin{enumerate}
  \item $X_\theta(\textrm{inside left-safe})$ consists of all pairs $(a,b)\in I_\theta$ for
    which there does not exist a pair $(a',b')\in I_\theta$ such that
    $a<_P b'$, $a'<_P b$, and $(a',a)$ is a left pair.
  \item $X_\theta(\textrm{inside right-safe})$ consists of all pairs $(a,b)\in I_\theta$ for
    which there does not exist a pair $(a',b')\in I_\theta$ such that
    $a<_P b'$, $a'<_P b$, and $(a',a)$ is a right pair.
\end{enumerate}

\begin{proposition}\label{pro:LR-safe}
  For each $\theta\in\{0,1\}$, the sets
\[
X_\theta(\textrm{inside left-safe})\quad \textrm{and}\quad X_\theta(\textrm{inside right-safe})
\]
are reversible.
\end{proposition}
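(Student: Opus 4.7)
The plan is to use the Trotter--Moore characterization: a subset of $\textrm{Inc}(P)$ is reversible if and only if it contains no strict alternating cycle. By the symmetry of the roles of left and right, it suffices to prove the claim for $X_\theta(\textrm{inside left-safe})$; the argument for the right-safe version is obtained by exchanging left and right throughout.

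I would fix $\theta\in\{0,1\}$ and suppose for contradiction that $((a_1,b_1),\dots,(a_k,b_k))$ is a strict alternating cycle of pairs from $X_\theta(\textrm{inside left-safe})$. By Lemma~\ref{lem:cgI-comprehensive}, these pairs share a common address $(j,\cgB)$, and for every $\alpha\in[k]$ we have $a_\alpha\in A_\cgB$ and $b_\alpha\in B_\cgB$. Proposition~\ref{pro:sac-comprehensive}\ref{pro:item-a-left-or-right} then implies that for any two distinct $\alpha,\beta\in[k]$, the pair $(a_\alpha,a_\beta)$ is either a left pair or a right pair in $P$. Combined with the transitivity of the left-pair and right-pair relations (Proposition~\ref{pro:details-on-LR}\ref{pro:item:left-pair-is-transitive},\ref{pro:item:right-pair-is-transitive}), this shows that the relation ``$(a_\alpha,a_\beta)$ is a left pair'' induces a strict linear order on $\{a_1,\dots,a_k\}$. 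Let $a_\gamma$ denote its minimum, so that $(a_\gamma,a_\alpha)$ is a left pair for every $\alpha\in[k]\setminus\{\gamma\}$.

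I would then exhibit an explicit witness showing that the cycle member $(a_{\gamma+1},b_{\gamma+1})$ (with indices taken cyclically) fails to be left-safe, contradicting its membership in $X_\theta(\textrm{inside left-safe})$. The candidate witness is $(a',b')=(a_\gamma,b_{\gamma+2})$. Unpacking the definition of left-safe, three facts must be verified: first, $a_\gamma<_P b_{\gamma+1}$, which is one of the alternating-cycle relations; second, $a_{\gamma+1}<_P b_{\gamma+2}$, which is also part of the alternating cycle; and third, $(a_\gamma,a_{\gamma+1})$ is a left pair, which is the minimality of $a_\gamma$. The crux is then certifying that $(a_\gamma,b_{\gamma+2})\in I_\theta$.

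For $k=2$ the pair $(a_\gamma,b_{\gamma+2})$ collapses to $(a_\gamma,b_\gamma)$, which already lies on the cycle and thus belongs to $I_\theta$. For $k\ge 3$, strictness of the alternating cycle yields $a_\gamma\not\le_P b_{\gamma+2}$; the reverse inequality $b_{\gamma+2}\le_P a_\gamma$ is also impossible, since by Lemma~\ref{lem:cgI-comprehensive} we have $b_{\gamma+2}>_P y_\cgB$ while $a_\gamma\parallel_P y_\cgB$, so such an inequality would force $y_\cgB<_P a_\gamma$. Hence $a_\gamma\parallel_P b_{\gamma+2}$, and Proposition~\ref{pro:all-inc-are-inside} places $(a_\gamma,b_{\gamma+2})$ in $I_\theta$ with the same address $(j,\cgB)$. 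I expect the main obstacle to be identifying the right witness: the naive guess of two consecutive cycle pairs breaks down for $k\ge 3$ because $a_{\gamma+1}$ is not below $b_\gamma$ in a strict alternating cycle, and one must reach one step further along the cycle to $b_{\gamma+2}$; verifying that this hybrid pair still lies in $I_\theta$ is precisely where Proposition~\ref{pro:all-inc-are-inside} together with the shadow-block structure from Lemma~\ref{lem:cgI-comprehensive} earns its keep.
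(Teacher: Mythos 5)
Your proof is correct and takes essentially the same route as the paper's: both reduce to a strict alternating cycle, use Lemma~\ref{lem:cgI-comprehensive}, Proposition~\ref{pro:sac-comprehensive} and the transitivity of left pairs to single out an extremal element in the left-pair order among $\{a_1,\dots,a_k\}$, and then exhibit a hybrid pair built from cycle elements (certified to lie in $I_\theta$ via Proposition~\ref{pro:all-inc-are-inside}) that violates left-safety of an adjacent cycle pair. The only cosmetic difference is that the paper relabels so that the extremal element is the \emph{maximum} ($(a_\alpha,a_1)$ a left pair for all $\alpha\ge 2$) and contradicts left-safety of $(a_1,b_1)$ with the witness $(a_k,b_2)$, whereas you take the minimum $a_\gamma$ and contradict $(a_{\gamma+1},b_{\gamma+1})$ with $(a_\gamma,b_{\gamma+2})$ --- the same argument up to a cyclic shift.
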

\begin{proof}
  Let $\theta\in\{0,1\}$.  We show that $X_\theta(\textrm{inside left-safe})$
  is reversible.  The argument for $X_\theta(\textrm{inside right-safe})$
  is symmetric.  Suppose to the contrary that 
  $((a_1,b_1),\dots,(a_k,b_k))$ is a strict alternating cycle of pairs from 
  $X_\theta(\textrm{inside left-safe})$. 
  Using Lemma~\ref{lem:cgI-comprehensive}, we know that there is a pair
  $(j,\cgB)$ that is the address of all pairs on the cycle.  Also, we know
  that for all $\alpha\in[k]$, $a_\alpha\in A_\cgB$, and $b_{\alpha}\in B_\cgB$.  
  By Proposition~\ref{pro:all-inc-are-inside} it follows that 
  if $\alpha,\beta\in[k]$, and $\beta\neq\alpha+1$, then
  $(a_\alpha,b_\beta)$ is an inside pair whose address is $(j,\cgB)$. 
  In particular, all such pairs are in $I_\theta$.

  Proposition~\ref{pro:sac-comprehensive}  implies that for distinct
  $\alpha,\beta\in[k]$, $(a_\alpha,a_\beta)$ is either a left pair or a right pair.
  Since left (right) pairs are transitive (see Proposition~\ref{pro:details-on-LR}), 
  we may assume, after a relabeling if necessary, that $(a_\alpha,a_1)$ is a left pair, 
  for every $\alpha\in\set{2,\dots,k}$.  
  Now we know that $(a_k,b_2)$ is in $I_\theta$,
  $a_1< b_2$ in $P$,  $a_k< b_1$ in $P$, and $(a_k,a_1)$ is a left pair.  This contradicts
  the assumption that $(a_1,b_1)$ is a pair in $X_\theta(\textrm{inside left-safe})$.
  The contradiction completes the proof.
\end{proof}

Propositions~\ref{pro:LO-RO} and~\ref{pro:LR-safe} identify the 
following six sets as being reversible:
\begin{align*}
X_1&=X(\textrm{left})\cup X(\textrm{outside}),\\
X_2&=X(\textrm{right})\cup X(\textrm{outside}),\\  
X_3&=X_0(\textrm{inside left-safe}),\\
X_4&=X_0(\textrm{inside right-safe}),\\
X_5&=X_1(\textrm{inside left-safe}),\\
X_6&=X_1(\textrm{inside right-safe}).
\end{align*}
For brevity, we say that a pair $(a,b)\in\Inc(P)$ is \emph{dangerous}
when it does not belong to $X_1\cup\dots\cup X_6$.  We state for emphasis the 
following elementary characterization of dangerous pairs.

\begin{proposition}\label{pro:dangerous}
  Let $(a,b)\in \Inc(P)$. 
  Then $(a,b)$ is dangerous
  if and only if 
  \begin{enumerate}
    \item $(a,b)$ is an inside pair; and 
    \item if $(a,b)$ has address $(j,\cgB)$ then 
    there are inside pairs $(w,z)$ and $(w',z')$ also with address $(j,\cgB)$ such that 
    \begin{enumerateAlpha}
    \item $a< z$, $w< b$ in $P$, $(w,a)$ is a left pair, and $(z,b)$ is a left pair;
    \item $a< z'$, $w'< b$ in $P$, and $(w',a)$ is a right pair, and $(z',b)$ is also a right pair.
    \end{enumerateAlpha}
    Moroever, $\cgB$ is a terminal block of $\shad_j(b)=\shad_j(z)=\shad_j(z')$.
  \end{enumerate}
\end{proposition}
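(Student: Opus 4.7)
The plan is to unfold the definitions of $X_1,\ldots,X_6$ and read off the characterization directly, using Lemma~\ref{lem:cgI-comprehensive} and Corollary~\ref{cor:sac-2} to upgrade the raw information obtained to the precise statement. Both implications are short bookkeeping arguments, with essentially no new content beyond what is packaged in those two results.

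For the forward direction, assume $(a,b)\in \Inc(P)$ is dangerous. The union $X_1\cup X_2$ equals $X(\textrm{left})\cup X(\textrm{right})\cup X(\textrm{outside})$, so $(a,b)\notin X_1\cup X_2$ forces $(a,b)$ to be an inside pair, since these four types partition $\Inc(P)$. Let $(j,\cgB)$ be its address and let $\theta\in\set{0,1}$ be the parity of $j$, so that $(a,b)\in I_\theta$. From $(a,b)\notin X_\theta(\textrm{inside left-safe})$ we extract a pair $(w,z)\in I_\theta$ with $a<z$ in $P$, $w<b$ in $P$, and $(w,a)$ a left pair. Now $((a,b),(w,z))$ is a strict alternating cycle of pairs from $I_\theta$ (strictness is automatic because $a\parallel b$ and $w\parallel z$), so Lemma~\ref{lem:cgI-comprehensive} gives a common address, which must then be $(j,\cgB)$, and tells us that $\cgB$ is the terminal block of $\shad_j(b)=\shad_j(z)$. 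Corollary~\ref{cor:sac-2} applied to the same cycle forces $(z,b)$ to be a left pair. Running the analogous argument with $X_\theta(\textrm{inside right-safe})$ yields a pair $(w',z')$ with all the claimed properties, completing the forward direction.

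For the converse, suppose $(a,b)$ is an inside pair admitting witnesses $(w,z)$ and $(w',z')$ as in the statement. Being inside, $(a,b)$ is by definition neither left, right, nor outside, hence $(a,b)\notin X_1\cup X_2$. Let $\theta$ be the parity of $j$, so $(a,b)\in I_\theta$; then the existence of $(w,z)\in I_\theta$ with the specified properties is precisely what the definition of $X_\theta(\textrm{inside left-safe})$ forbids, so $(a,b)$ is not in that set, and analogously $(a,b)\notin X_\theta(\textrm{inside right-safe})$. Because $X_{1-\theta}(\textrm{inside left-safe})\cup X_{1-\theta}(\textrm{inside right-safe})\subseteq I_{1-\theta}$, which is disjoint from $I_\theta$, the pair $(a,b)$ also avoids those two sets. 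Hence $(a,b)\notin X_1\cup\cdots\cup X_6$, so $(a,b)$ is dangerous.

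I expect no serious obstacle: the entire argument is a pure unwinding of definitions, once Lemma~\ref{lem:cgI-comprehensive} and Corollary~\ref{cor:sac-2} are available. The only point deserving a moment's care is verifying that the address of the auxiliary pair $(w,z)$ coincides with $(j,\cgB)$ and that $\cgB$ is simultaneously the terminal block of $\shad_j(b)$ and $\shad_j(z)$; but this is exactly the ``common address'' content of Lemma~\ref{lem:cgI-comprehensive}, applied to the two-term strict alternating cycle $((a,b),(w,z))$.
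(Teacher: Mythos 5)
Your proof is correct; the paper states this proposition without proof (``we state for emphasis the following elementary characterization''), and your argument supplies exactly the intended definitional unwinding, with Lemma~\ref{lem:cgI-comprehensive} upgrading the raw witness from the left-safe/right-safe definitions to one with the common address and terminal-block property, and Corollary~\ref{cor:sac-2} (applied to the two-term cycle read as $((w,z),(a,b))$, so that ``$(w,a)$ left iff $(z,b)$ left'') yielding the extra condition on $(z,b)$ and $(z',b)$. The only details worth noting are that strictness of the cycle $((a,b),(w,z))$ is indeed automatic from $a\parallel b$, $w\parallel z$, $a<z$, $w<b$, and that uniqueness of addresses forces the lemma's common address to be $(j,\cgB)$ — both of which you handle.
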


When $(a,b)$ is a dangerous pair with address $(j,\cgB)$, as
evidenced by the pairs $(w,z)$ and $(w',z')$ from
Proposition~\ref{pro:dangerous}, we will refer
to $(w,z)$ as a \emph{left neighbor} of $(a,b)$. 
 Analogously, $(w',z')$ will
be called a \emph{right neighbor} of $(a,b)$.  
Note that by Lemma~\ref{lem:cgI-comprehensive},
 we have $\sd(w)=\sd(w')=\sd(a)=j$.

For $i\in[6]$, 
we let $L_i$ be a linear extension of $P$ that
reverses the incomparable pairs in $X_i$ 
(the linear extensions $L_1$ and
$L_2$ were discussed in the common part of the proof given in the preceding section).
Given a pair $(a,b)$ of elements of $P$, 
we can say with
certainty that $a\not\le b$ in $P$ 
if there is some $i\in[6]$ such that $a\not\le b$ in $L_i$.
However, if $a\le b$ in $L_i$ for all $i\in[6]$, we know that one of the
following two statements holds:
\begin{enumerate}
  \item $a\le b$ in $P$; or
  \item $(a,b)$ is a dangerous pair.
\end{enumerate}
The goal for the next seven linear orders $L_7,\dots,L_{13}$ will be be to detect which of these
two options applies.   These linear orders will \emph{not} be linear extensions of $P$.

The next step in the proof is an application of a well known technique used 
by researchers working on Boolean dimension.  By convention, we set $\sd(x_0)=-1$.  Then
for $\theta\in\{0,1\}$, let $M_\theta$ denote an arbitrary 
linear order on the elements $u$ in $P$ such that $\sd(u)\equiv\theta\mod2$.
Then let $M'_\theta$ be the dual of $M_\theta$.
Then set: 
\[ 
L_7 =  M_0 < M_1,\quad L_8 =  M'_0 < M'_1,\quad \textrm{and}\quad 
L_9 =  M_0 < M'_1.
\]
Now let $a$ and $b$ be \emph{distinct} elements of $P$.
If $a<b$ in $L_7$ and $a<b$ in $L_8$, then we know $\sd(a)$ is even and
$\sd(b)$ is odd.  If $a<b$ in $L_7$ and $a>b$ in $L_8$, then
we know $\sd(a)$ and $\sd(b)$ have the same parity.  
In that case, if $a<b$ in $L_9$, then 
$\sd(a)$ and $\sd(b)$ are both even; 
while if $a>b$ in $L_9$, the
$\sd(a)$ and $\sd(b)$ are both odd.

\begin{proposition}\label{pro:j+1-dangerous-equiv}
  Let $\cgB$ be a shadow block, and let $j=\sd(y_\cgB)$.  If $a\in A_\cgB$,
  $b,c\in B_\cgB$, and $\shad_{j+1}(b)=\shad_{j+1}(c)$, then 
  $(a,b)$ is dangerous if and only if $(a,c)$ is dangerous.
\end{proposition}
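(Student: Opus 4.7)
By symmetry, it suffices to show that $(a,b)$ dangerous implies $(a,c)$ dangerous. The strategy is to reuse the neighbors of $(a,b)$ furnished by Proposition~\ref{pro:dangerous} as neighbors of $(a,c)$.

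First, I would verify that $(a,c)$ is an inside pair with the same address $(j,\cgB)$ as $(a,b)$, which places $(a,c)$ in the same parity class $I_\theta$. Since $a \in A_\cgB$ and $c \in B_\cgB$, we have $a \not\geq c$ (otherwise $a > c > y_\cgB$ would contradict $a \parallel y_\cgB$). Proposition~\ref{pro:block-details}\,(\ref{pro:item:b-and-b'-same-shadow}) transfers $a \not<_P b$ to $a \not<_P c$, giving $a \parallel c$; Proposition~\ref{pro:all-inc-are-inside} then produces the address.

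Next, take the left neighbor $(w,z)$ and right neighbor $(w',z')$ of $(a,b)$. I claim both serve as the corresponding neighbors of $(a,c)$. The comparabilities $w < c$ and $w' < c$ follow from Proposition~\ref{pro:block-details}\,(\ref{pro:item:b-and-b'-same-shadow}) since $w,w' \in A_\cgB$ (being in the interior of $\cgB$ as members of inside pairs with address $(j,\cgB)$) and $w,w' < b$. Every other condition transfers directly except for $(z,c)$ being a left pair and $(z',c)$ being a right pair. For the former, the plan is to check that $\bigl((a,c),(w,z)\bigr)$ is a strict alternating cycle of inside pairs in $I_\theta$; then, since $(w,a)$ is a left pair (so $(a,w)$ is a right pair), Proposition~\ref{pro:sac-comprehensive}\,(\ref{pro:item-a-left-iff-b-right}) implies that $(z,c)$ is not a right pair, and Proposition~\ref{pro:sac-comprehensive}\,(\ref{pro:item-b-left-or-right}) forces it to be a left pair. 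The argument for $(z',c)$ being a right pair is symmetric.

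The main obstacle will be verifying the antichain condition $c \parallel z$, the only nontrivial ingredient in establishing that $\bigl((a,c),(w,z)\bigr)$ is a strict alternating cycle. The case $z \geq c$ is immediate: together with $w < c$ it would give $w < z$, contradicting the inside pair $(w,z)$. For $z \leq c$, Proposition~\ref{pro:sac-comprehensive}\,(\ref{pro:item-b-not-in-shadow-b}) applied to the strict alternating cycle $\bigl((a,b),(w,z)\bigr)$ gives $z \notin \shad_{j+1}(b) = \shad_{j+1}(c)$. Since $c \in \shad_{j+1}(c)$, any witnessing path from $z$ to $c$, which by Proposition~\ref{pro:path-in-block} stays in $\cgB$, must first enter $\shad_{j+1}(c)$ at some vertex $v$ on its boundary. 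That boundary consists of portions of $\Wleft(c)$ and $\Wright(c)$ between $y_\cgB$ and $y_\ast := \max$ of the terminal block of $\shad_{j+1}(c)$, so $v \leq y_\ast$ in $P$. Combining $z < v \leq y_\ast \leq b$ yields $z < b$, contradicting $z \parallel b$ from the left pair $(z,b)$. Therefore $c \parallel z$, completing the argument.
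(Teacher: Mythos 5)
Your proof is correct, and its core coincides with the paper's: reuse the left and right neighbors $(w,z)$, $(w',z')$ of $(a,b)$, and transfer the comparabilities $w<_P b$, $w'<_P b$ to $c$ via Proposition~\ref{pro:block-details}.\ref{pro:item:b-and-b'-same-shadow} (which also converts $a\parallel_P b$ into $a\parallel_P c$, so that Proposition~\ref{pro:all-inc-are-inside} pins down the address of $(a,c)$). The entire second half of your argument, however, is unnecessary. ``Dangerous'' means lying outside $X_1\cup\dots\cup X_6$, and the defining condition of $X_\theta(\textrm{inside left-safe})$ asks only for a pair $(a',b')\in I_\theta$ with $a<_P b'$, $a'<_P b$, and $(a',a)$ a left pair; it imposes no requirement that $(b',b)$ be a left pair. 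That requirement appears only as a derived consequence in the characterization of Proposition~\ref{pro:dangerous}, so there is no need to re-establish that $(z,c)$ is a left pair and $(z',c)$ is a right pair --- the paper stops as soon as $w<_P c$, $w'<_P c$, and the address of $(a,c)$ are in hand. Your extra verification is nonetheless valid; note only that the step you single out as the main obstacle, namely $c\parallel_P z$, is automatic rather than something to be checked separately: for a size-two alternating cycle, strictness reduces to $a\not\le_P c$ and $w\not\le_P z$, both already known from incomparability, and strictness in turn forces $\{c,z\}$ to be a two-element antichain.
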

\begin{proof}
  Recall, that Proposition~\ref{pro:all-inc-are-inside} states that 
  all incomparable pairs from $A_\cgB\times B_\cgB$ are inside pairs with
  address $(j,\cgB)$.  
  Let $a\in A_\cgB$,
  $b,c\in B_\cgB$, $\shad_{j+1}(b)=\shad_{j+1}(c)$, and assume that  
  $(a,b)$ is dangerous.
  Let $(w,z)$ and $(w',z')$ be, respectively,
  a left neighbor and a right neighbor of $(a,b)$.  Then
  $a< \{z,z'\}$ in $P$, 
  $b>\{w,w'\}$ in $P$, 
  $(w,a)$ is a left pair, 
  and $(w',a)$ is a right pair. 
  Since $\shad_{j+1}(b)=\shad_{j+1}(c)$, 
  Proposition~\ref{pro:block-details}.\ref{pro:item:b-and-b'-same-shadow} implies that if $u\in A_\cgB$, then
  $u< b$ in $P$ if and only if $u< c$ in $P$. 
  In particular, $a\parallel b$ in $P$ implies $a\parallel c$ in $P$, so
  $(a,c)$ is an inside pair with address $(j,\cgB)$. 
  Also, $b>\{w,w'\}$ in $P$ implies 
  $c>\{w,w'\}$ in $P$. 
  Therefore, the pairs $(w,z)$ and $(w',z')$ now witness that
  $(a,c)$ is a dangerous pair. 
  Since this argument can be reversed if we start
  with the assumption that $(a,c)$ is dangerous, the proof is complete.
\end{proof}

For each $\theta\in\{0,1\}$, let
$D_\theta$ denote the set of dangerous pairs $(a,b)$ such that 
$(a,b)\in I_{\theta}$. 
Note that if $(a,b)$ is a dangerous pair with address $(j,\cgB)$, 
then Lemma~\ref{lem:cgI-comprehensive} implies that $\sd(a) = j$. 
Therefore, $D_{\theta}$ is exactly the set of 
all dangerous inside pairs with 
$\sd(a)\equiv\theta\mod2$.
The next proposition implies that when treated as a binary relation, the
set $D_\theta$ is transitive.

\begin{proposition}\label{pro:dangerous-transitive}
  For each $\theta\in\{0,1\}$, 
  if $(a,b),(b,c)\in D_\theta$, 
  then $(a,c)\in D_\theta$.
\end{proposition}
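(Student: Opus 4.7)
Let $(j_1,\cgB_1)$ and $(j_2,\cgB_2)$ denote the addresses of $(a,b)$ and $(b,c)$. Pairing each dangerous pair with its left neighbor (Proposition~\ref{pro:dangerous}) yields a strict alternating $2$-cycle in $I_\theta$, so Lemma~\ref{lem:cgI-comprehensive} gives $\sd(a)=j_1$, $\sd(b)=j_2$, and $\sd(c)>j_2$. Since $j_1,j_2\equiv\theta\pmod 2$ and $j_2>j_1$, in fact $j_2\ge j_1+1$. Because the depth of $(a,b)$ is strictly below the depth of $(b,c)$, we have $\shad_{j_1}(b)=\shad_{j_1}(c)$, with common terminal block $\cgB_1$. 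In particular $y_{\cgB_1}$ is the $(j_1{+}1)$-st reversing element of $c$ and $y_{\cgB_2}$ is the $(j_2{+}1)$-st, so $y_{\cgB_1}<_P y_{\cgB_2}$; combined with $c>_P y_{\cgB_2}$ and the standard nesting that places $\cgB_2$ in the interior of $\cgB_1$, this shows $c\in B_{\cgB_1}$.

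Next I show $(a,c)\in\Inc(P)$, which together with $a\in A_{\cgB_1}$, $c\in B_{\cgB_1}$ and Proposition~\ref{pro:all-inc-are-inside} makes $(a,c)$ an inside pair with address $(j_1,\cgB_1)$, and hence $(a,c)\in I_\theta$. First I rule out $a\in\cgB_2$: by Proposition~\ref{pro:shadow-comp}.\ref{pro:item:depth}, any element of $\cgB_2$ other than the base element of $\shad_{j_2}(c)$ has shadow depth at least $j_2>j_1=\sd(a)$, and that base is $\le_P x_{\cgB_2}<_P y_{\cgB_2}<_P b$, so equal to $a$ would contradict $a\parallel_P b$. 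Any witnessing path connecting $a$ and $c$ in either direction must then meet the boundary of $\cgB_2$ at some $v\le_P y_{\cgB_2}<_P b$; therefore $a\le_P c$ would force $a<_P b$, while $c\le_P a$ would force $c<_P b$, each impossible.

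Finally, to certify that $(a,c)$ is dangerous, note that since $(a,c)$ is inside it avoids $X_1\cup X_2$. Let $(w_1,z_1)$ and $(w_1',z_1')$ be the left and right neighbors of $(a,b)$ supplied by Proposition~\ref{pro:dangerous}. The nesting $\shad_{j_2}(c)\subseteq\shad_{j_1+1}(c)$ holds whether $j_2=j_1+1$ or $j_2>j_1+1$, so $b\in\cgB_2\subseteq\shad_{j_1+1}(c)$. Applying Proposition~\ref{pro:block-details}.\ref{pro:item:b-in-shad-b'} with $\cgB=\cgB_1$ to each of $w_1<_P b$ and $w_1'<_P b$ yields $w_1<_P c$ and $w_1'<_P c$. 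Now $(w_1,z_1)\in I_\theta$ together with $a<_P z_1$, $w_1<_P c$, and $(w_1,a)$ a left pair witnesses $(a,c)\notin X_\theta(\text{inside left-safe})$; symmetrically $(w_1',z_1')$ shows $(a,c)\notin X_\theta(\text{inside right-safe})$. All six reversible sets $X_1,\dots,X_6$ are avoided, so $(a,c)$ is dangerous, and since $\sd(a)=j_1\equiv\theta\pmod 2$ we conclude $(a,c)\in D_\theta$.

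The main obstacle I anticipate is resisting the tempting shortcut through Proposition~\ref{pro:j+1-dangerous-equiv}, which would transfer dangerousness from $(a,b)$ to $(a,c)$ only under the equality $\shad_{j_1+1}(b)=\shad_{j_1+1}(c)$; this equality fails precisely in the subcase $j_2=j_1+1$, since then the last shadow of $b$ ends at $b$ while $\shad_{j_1+1}(c)$ extends strictly beyond. The route described above bypasses this difficulty by building explicit witnesses from the neighbors of $(a,b)$ and invoking only the weaker containment $b\in\shad_{j_1+1}(c)$, which is valid in both subcases.
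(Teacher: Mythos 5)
Your argument breaks at the step establishing $(a,c)\in\Inc(P)$, and the break traces back to a misreading of the parity mechanism. You write $y_{\cgB_2}<_P b$ (twice), but this is false: since $(b,c)$ is dangerous, $\cgB_2$ is the \emph{terminal} block of $\shad_{j_2}(c)$, and Lemma~\ref{lem:cgI-comprehensive} applied to the strict alternating $2$-cycle formed by $(b,c)$ and its left neighbor gives $b\parallel_P \max(\cgB_2)=y_{\cgB_2}$ (this is also stated in the remark following the definition of address). Consequently the chain ``$a<_P v\le_P y_{\cgB_2}<_P b$'' collapses, and from a boundary element $v$ of $\cgB_2$ on a witnessing path from $a$ to $c$ you only get $a<_P v\le_P y_{\cgB_2}\le_P c$, which is no contradiction with $a\parallel_P b$. (The same false relation appears when you rule out $a$ being the base element of $\shad_{j_2}(c)$; that sub-step is salvageable via $\text{base}\le_P x_{\cgB_2}<_P b$ using Proposition~\ref{pro:uWLWR}, but the main incomparability argument is not.) The correct way to get $a\parallel_P c$ is Proposition~\ref{pro:block-details}.\ref{pro:item:b-and-b'-same-shadow} (or \ref{pro:item:b-in-shad-b'}), which needs $\shad_{j_1+1}(b)=\shad_{j_1+1}(c)$ --- precisely the hypothesis you set out to avoid.

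The deeper issue is that the ``obstacle'' you describe in your last paragraph does not exist. Since $(a,b)$ and $(b,c)$ both lie in $D_\theta$, we have $j_1\equiv j_2\equiv\theta\pmod 2$, and together with $j_2=\sd(b)>j_1$ this forces $j_2\ge j_1+2$, not merely $j_1+1$; the subcase $j_2=j_1+1$ is exactly what the partition of the inside pairs into $I_0\sqcup I_1$ was designed to exclude. Hence the depth of $(b,c)$ exceeds $j_1+1$, so $\shad_{j_1+1}(b)=\shad_{j_1+1}(c)$ does hold, and the ``tempting shortcut'' through Proposition~\ref{pro:j+1-dangerous-equiv} is valid --- it is in fact the paper's entire proof (that proposition also delivers $a\parallel_P c$ and the address of $(a,c)$ as a byproduct). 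Your third paragraph, which transfers the neighbors $(w_1,z_1)$ and $(w_1',z_1')$ via Proposition~\ref{pro:block-details}.\ref{pro:item:b-in-shad-b'}, is essentially a correct unwinding of that proposition's proof; but as written your overall argument has a genuine gap at the incomparability step, and repairing it requires the parity bound $j_2\ge j_1+2$ that you declined to use.
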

\begin{proof}
  Fix $\theta\in\set{0,1}$ and suppose that $(a,b),(b,c)\in D_{\theta}$.
  Let $j=\sd(a)$, so $j\equiv\theta\mod2$. 
  Since $(a,b)$ is dangerous, we know
  $\sd(b)>\sd(a)=j$ (see Lemma~\ref{lem:cgI-comprehensive}.\ref{lem:comprehensive:item:sd}).  
  Since $(b,c)\in D_\theta$, it follows that 
  $\sd(b)\equiv\theta\mod2$ and therefore $\sd(b)\ge j+2$. 
  This implies that $b$ and $c$ are 
  in the terminal region of their common $(j+1)$-shadow. 
  Thus
  $\shad_{j+1}(b)=\shad_{j+1}(c)$ and 
  Proposition~\ref{pro:j+1-dangerous-equiv}
  implies that $(a,c)\in D_\theta$.
\end{proof}

Let $a$ and $b$ be elements of $P$.
We will then say that the 
comparability $a< b$ in $P$ \emph{tilts right} when there exists an element
$u$ of $P$ satisfying the following conditions: (1)~$(u,a)$ is a right pair;
(2)~$(u,b)$ is a dangerous pair; and (3)~if $(j,\cgB)$ is the address of $(u,b)$, 
then $a\in A_\cgB$. 
We note that the fact that
$b\in B_\cgB$ coupled with the inequality $a< b$ in $P$ 
implies (by Proposition~\ref{pro:block-details}.\ref{pro:item:sda-more-than-j}) that $\sd(a)=j$.

Analogously, we will then say that the comparability $a< b$ in $P$ \emph{tilts left} when 
there exists an element $v$ of $P$ satisfying the following conditions: (1)~$(v,a)$ 
is a left pair; (2)~$(v,b)$ is a dangerous pair; and (3)~if $(j,\cgB)$ is the address of $(v,b)$, 
then $a\in A_\cgB$.  Again, we have $\sd(a)=j$.

In the proof of the following proposition, the fact that a 
dangerous pair has \emph{both} a left neighbor and a right neighbor
is essential.

\begin{proposition}\label{pro:tilt-one-direction} 
  No comparability in $P$ tilts both left and right. 
\end{proposition}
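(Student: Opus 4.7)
The plan is to assume $a<b$ tilts both ways and extract from the two witnesses enough structure to contradict Corollary~\ref{cor:sac-2}.  Let $u$ witness the right tilt (so $(u,a)$ is a right pair and $(u,b)$ is dangerous) and let $v$ witness the left tilt.  First I would argue that the addresses of $(u,b)$ and $(v,b)$ coincide: the remark following the tilt definitions gives that both depths equal $\sd(a)$, call it $j$; Proposition~\ref{pro:dangerous} forces the block component of each address to be the terminal block of $\shad_j(b)$, which is unique; so both addresses equal some $(j,\cgB)$ and $a,u,v\in A_\cgB$.  Transitivity of left pairs (Proposition~\ref{pro:details-on-LR}) applied to $(v,a)$ and the reversal of $(u,a)$ gives that $(v,u)$ is a left pair.

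Next I would introduce a right neighbor $(w',z')$ of $(u,b)$ and a left neighbor $(p,q)$ of $(v,b)$, producing $z',q\in B_\cgB$ with $u<z'$, $v<q$, and $(z',b)$ right, $(q,b)$ left.  Fixing a separating path $N$ in $\cgB$ associated with $a<b$, Proposition~\ref{pro:force-sides} places $u$ right of $N$ and $v$ left of $N$.  The easy subcases are $z'\parallel a$ and $q\parallel a$: Proposition~\ref{pro:force-sides}.\ref{pro:item:b'-right} or~\ref{pro:item:b'-left} places $z'$ or $q$ on the opposite side of $N$ from $u$ or $v$, and Proposition~\ref{pro:force-comp} then forces $u<b$ or $v<b$, contradicting dangerousness.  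So I may assume $z'>a$ and $q>a$; since $A_\cgB$-elements cannot exceed $B_\cgB$-elements, the only remaining options are $v<z'$ or $v\parallel z'$, and symmetrically for $u,q$.  If $v<z'$, then $z'\notin N$ (ruled out by $z'\parallel b$ together with the structure of $N$), and whichever side of $N$ contains $z'$ produces either $v<b$ or $u<b$ via Proposition~\ref{pro:force-comp}; the case $u<q$ is entirely symmetric.

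The hard core will be the remaining subcase $v\parallel z'$ and $u\parallel q$.  Here Proposition~\ref{pro:all-inc-are-inside} makes $(u,q)$ and $(v,z')$ inside pairs with the common address $(j,\cgB)$, and the inequalities $u<z'$ and $v<q$ turn $((u,q),(v,z'))$ into a strict alternating cycle in $I_\theta$ (with $\theta$ the parity of $j$; the antichain requirements $u\parallel v$ and $q\parallel z'$ follow automatically from strictness).  Corollary~\ref{cor:sac-2} then demands that $(u,v)$ be a left pair if and only if $(q,z')$ is a left pair.  But $(u,v)$ is a right pair (as the reversal of $(v,u)$), while Proposition~\ref{pro:consistent-paths-transitive} applied to $(q,b)$ left and $(b,z')$ left (the latter reversed from the right pair $(z',b)$) shows $(q,z')$ is a left pair.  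This contradiction will complete the proof.  The principal obstacle is precisely this last subcase: when both $z'$ and $q$ sit strictly above $a$ in $P$, the separating-path arguments driven by $a<b$ no longer directly control these elements, and the alternating-cycle argument is what binds the two dangerousness witnesses into a single global contradiction.
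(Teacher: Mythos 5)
Your proof is correct, but the core of your argument is genuinely different from the paper's. Both proofs share the same opening: the two addresses coincide at $(j,\cgB)$ (you get $j_u=j_v$ slightly more directly, via $\sd(a)=j_u=j_v$ from the remark after the tilt definitions, where the paper argues that $j_u<j_v$ would make $(u,a)$ an inside pair), a separating path $N$ for $a<_Pb$ places $u$ right of $N$ and $v$ left of $N$, and both proofs pull in a right neighbor of $(u,b)$ and a left neighbor of $(v,b)$. From there the paper works at the level of edge orderings: it compares $\Wleft(z)$ with $\Wleft(p)$ at their branch point, splits on which side of $N$ the branching edge falls, and in the hard case builds a cycle $\cgD$ from pieces of $\Wleft(z')$ and $N$ to force $z\in\shad_{j+1}(b)$, contradicting Proposition~\ref{pro:block-details}. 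You instead run a clean order-theoretic trichotomy on $v$ vs.\ $z'$ and $u$ vs.\ $q$: the comparable cases die immediately by Proposition~\ref{pro:force-comp} (since the relevant $B_\cgB$-element cannot lie on $N$), and in the remaining case you assemble $((u,q),(v,z'))$ into a strict alternating cycle in $I_\theta$ --- the incomparabilities needed for strictness are exactly the hypotheses of that case --- and invoke Corollary~\ref{cor:sac-2} against the facts that $(u,v)$ is a right pair while $(q,z')$ is a left pair (by transitivity from $(q,b)$ and $(b,z')$). This delegates all remaining planarity work to the already-proved Proposition~\ref{pro:sac-comprehensive}, making the argument shorter and more modular than the paper's self-contained geometric analysis; both routes make essential use of the fact that a dangerous pair has neighbors of both orientations. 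Your preliminary subcases $z'\parallel a$ and $q\parallel a$ are harmless but redundant, since the later trichotomy on $v$ vs.\ $z'$ and $u$ vs.\ $q$ already covers everything.
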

\begin{proof}
  Let $a$ and $b$ be elements of $P$ with $a< b$ in $P$. 
  We assume that the comparability $a< b$ in $P$ tilts both right and left and argue
  to a contradiction.  
  Let $u$ be an element of $P$ witnessing that
  $a< b$ in $P$ tilts right and 
  let $v$ be an element of $P$ witnessing that
  $a< b$ in $P$ tilts left. 
  Therefore, $(u,b)$ and $(v,b)$ are dangerous pairs. 

  Let $j_u$ and $j_v$, respectively, be the depth of $(u,b)$ and $(v,b)$.
  We claim that $j_u=j_v$. 
  Suppose the opposite, say $j_u<j_v$.
  Then consider $(j_v,\cgB)$ the address of $(v,b)$. 
  Since $(v,b)$ is dangerous, 
  we have that $a\in A_{\cgB}$, $b\in B_{\cgB}$, 
  and $\cgB$ is a (terminal) block of $\shad_{j_v}(b)$.
  In particular, $\shad_i(a)=\shad_i(b)$ for all $0\leq i< j_v$. 

  Since $(u,b)$ is an inside pair of depth $j_u<j_v$, 
  Proposition~\ref{pro:geometry-inside} implies 
  that $u$ is in the interior of $\shad_{j_u}(b)=\shad_{j_u}(a)$. 
  Recall that $(u,a)$ is a right pair, so in particular an incomparable pair. 
  However, again by Proposition~\ref{pro:geometry-inside}, 
  we conclude that $(u,a)$ is an inside pair. 
  This contradicts our assumption that $j_u<j_v$. 
  Similar contradiction is obtained when we assume $j_u>j_v$. 
  This proves that $j_u=j_v$. 

  Let $j=j_u=j_v$ and let $(j,\cgB)$, $(j,\cgB')$ be the addresses of $(u,b)$ and $(v,b)$,
  respectively. By Proposition~\ref{pro:dangerous} both $\cgB$ and $\cgB'$ must be 
  a terminal block of $\shad_j(b)$. Thus, $\cgB=\cgB'$ and 
  $(u,b)$, $(v,b)$ have a common address $(j,\cgB)$.

  Let $N$ be a separating path in $\cgB$ that is associated with the 
  inequality $a< b$ in $P$, and let $p$ be the peak of $N$.  Since
  $(u,a)$ is a right pair, and $(v,a)$ is a left pair, 
  Proposition~\ref{pro:force-sides} implies that $u$ is right of $N$, and $v$ is left of $N$.
 
  Let $(w,z)$ be a left neighbor of $(v,b)$,
  and let $(w',z')$ be a right neighbor of $(u,b)$.  
  
  Let $s$ be the largest element common to $\Wleft(p)$ and $\Wleft(z)$. 
  Clearly, $s$ lies in $y_{\cgB} Np$. 
  Then $s\neq z$ as otherwise $z=s\leq p \leq b$ in $P$. 
  Let $e^-$ be the edge of $\Wleft(z)$ before $s$. 
  Note that either $e^-$ is in $N$ or 
  $s=y_{\cgB}$ and then 
  $e^-$ is on the left side of $\cgB$. 
  Let $e^+$ be the edge of $\Wleft(z)$ immediately after
  $s$. Then $e^+$ is not on $N$, so we have identified two cases: 
  $e^+$ is right of $N$ and $e^+$ is left of $N$.

  \smallskip
  \noindent
  \textit{Case 1.}\quad $e^+$ is right of $N$.

  In this case, we are going to show that $z$ is also right of $N$. 
  Suppose the opposite, so $z$ is not left of $N$ . 
  This implies that $s\Wleft(z)z$ intersects $N$, say at element $t$ with $t\neq s$. 
  First, if $s=p$ then $p\Wleft(z)tNp$ is a directed cycle in $P$, contradiction.
  Now, suppose that $s\neq p$ so $s < p$ in $P$ and 
  let $e$ be the first edge of $sNp$. 
  Since $e^+$ is right of $N$, we have that $e^+$ is left of $e$ in the $(s,e^-)$-ordering. 
  Now the witnessing path $x_0\Wleft(p)s\Wleft(z)tNp$ is $x_0$-left of $\Wleft(p)$ which should be the leftmost path from $x_0$ to $p$, contradiction.

  Therefore, we have that $z$ is right of $N$. 
  Since $v$ is left of $N$, and $v< z$ in $P$, 
  Proposition~\ref{pro:force-comp} forces $v<b$ in $P$, which is false.  
  This shows that Case 1. cannot hold.

  \smallskip
  \noindent
  \textit{Case 2.}\quad $e^+$ is left of $N$.

  In this case, if we have $s<p$ in $P$, 
  then $e^+$ is right of $e$ in the $(s,e^-)$-ordering, 
  where $e$ is the first edge of $sNp$. 
  This would imply $\Wleft(z)$ being $x_0$-right of $\Wleft(b)$, 
  contradicting that $(z,b)$ is a left pair.

  Therefore, $s=p$ and all edges and vertices of $p\Wleft(z)z$, except $p$, 
  are left of $N$.
  Also, all edges and vertices of $p\Wleft(b)b$, except
  $p$, are left of $N$.

  Let $s'$ be the largest element common to $\Wleft(p)$ and $\Wleft(z')$.
  Clearly, $s'$ lies in $y_{\cgB} Np$. 
  Note that $s'\neq z'$ as otherwise $z'=s'\leq b$ in $P$ which is false. 
  Let $e'^-$ be the edge of $\Wleft(z')$ before $s'$. 
  Note that either $e'^-$ is in $N$ or 
  $s'=y_{\cgB}$ and then 
  $e'^-$ is on the left side of $\cgB$. 
  Let $e'^+$ be the edge of $\Wleft(z')$ immediately after
  $s'$. Then $e'^+$ is not on $N$.

  Let $e'$ be the first edge of $s'\Wleft(b)b$. 
  Now, the fact that $(z',b)$ is a right pair implies that 
  $\Wleft(z')$ is $x_0$-right of $\Wleft(b)$. 
  This implies that $e'^+$ is right of $e'$ in the $(s',e'^-)$-ordering.
  Since $e'$ is on $N$ (if $s'<p$ in $P$) or $e'$ is left of $N$ (if $s'=p$), 
  we conclude that $e'^+$ must be left of $N$.

  Consider now the case that $z'$ is left of $N$. 
  Recall that $u$ is right of $N$. 
  Since $u<z'$ in $P$, Proposition~\ref{pro:force-comp} forces $u<b$ in $P$, which is false.  
  This shows that $z'$ is not left of $N$.

  Since $e'^+$ is left of $N$ but $z'$ is not left of $N$, 
  there is an element $t'\neq s'$ in $s\Wleft(z')z'$ 
  such that $t'$ is on $N$. 
  Then there is a cycle $\cgD$ in $G$ formed by
  $s'\Wleft(z')t'$ and $t'NpNs'$. 
  Note that $b$ and $z$ are in the interior of $\cgD$.

  All elements on the boundary of $\cgD$ are below $p$ in $P$ so also below $b$ in $P$. 
  It follows that
  all elements on and inside $\cgD$ are in $\shad_{j+1}(b)$.  In 
  particular, $z\in\shad_{j+1}(b)$.  
  This is a contradiction with Proposition~\ref{pro:block-details}.\ref{pro:item:b-in-shad-b'} since 
  $((v,b), (w,z))$ is a strict alternating cycle of size~2.
\end{proof}

For each $\theta\in\{0,1\}$, we will now define a binary
relation $\NTR_\theta$ as follows:
\begin{enumerate}
  \item we assign to $\NTR_\theta$ pairs 
  $(x_0,u)$ for all $u\neq x_0$ in $P$; and 
  \item we assign to $\NTR_\theta$ all pairs $(a,b)$ such that 
  $a,b\in U_P(x_0)$, $\sd(a)\equiv\theta\mod2$, and $a<_P b$
\emph{except} those pairs $(a,b)$ where the comparability $a<_P b$ tilts right.
\end{enumerate}
Note that the abbreviation $\NTR$ in this notation is short for ``not tilting right.''
We also define for each $\theta\in\set{0,1}$ in the obvious symmetric manner a second binary relation denoted
$\NTL_\theta$: 
\begin{enumerate}
  \item we assign to $\NTL_\theta$ pairs 
  $(x_0,u)$ for all $u\neq x_0$ in $P$; and 
  \item we assign to $\NTL_\theta$ all pairs $(a,b)$ such that 
  $a,b\in U_P(x_0)$, $\sd(a)\equiv\theta\mod2$, and $a<_P b$
\emph{except} those pairs $(a,b)$ where the comparability $a<_P b$ tilts left.
\end{enumerate}

\begin{proposition}\label{pro:NTR-transitive}
  For each $\theta\in\{0,1\}$, the binary relations $\NTR_\theta$ and
  $\NTL_\theta$ are strict partial orders, i.e., each of $\NTR_\theta$ and 
  $\NTL_\theta$ is irreflexive and transitive.
\end{proposition}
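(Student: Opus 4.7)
The plan is as follows. Irreflexivity of both $\NTR_\theta$ and $\NTL_\theta$ is immediate from the definitions: neither admits any pair of the form $(x,x)$, since $(x_0,u)$ requires $u\ne x_0$ and $(a,b)$ with $a\in U_P(x_0)$ requires the strict inequality $a<b$ in $P$. For transitivity, I work with $\NTR_\theta$; the case of $\NTL_\theta$ is symmetric. Suppose $(a,b),(b,c)\in\NTR_\theta$. The case $a=x_0$ is handled directly: $(b,c)\in\NTR_\theta$ forces $b<c$ in $P$, hence $c\ne x_0$, and so $(x_0,c)\in\NTR_\theta$. Otherwise $a,b,c\in U_P(x_0)$ with $a<b<c$ in $P$ and $\sd(a)\equiv\sd(b)\equiv\theta\mod 2$. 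Assume for contradiction that $(a,c)\notin\NTR_\theta$; then $a<c$ tilts right via some $u$, so $(u,a)$ is a right pair, $(u,c)$ is a dangerous pair with address $(j,\cgB)$ where $j=\sd(a)\equiv\theta\mod 2$, and $a\in A_\cgB$. The goal is to show that $u$ already witnesses either that $a<b$ tilts right (contradicting $(a,b)\in\NTR_\theta$) or that $b<c$ tilts right (contradicting $(b,c)\in\NTR_\theta$).

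First I would establish three structural facts about $u$ and $b$: (1) $u\parallel b$, since $u<b$ would give $u<c$ and $u>b$ would give $u>a$, both contradictions; (2) $b$ lies in the interior of $\cgB$ with $b\in A_\cgB\cup B_\cgB$, obtained from $a\parallel y_\cgB$ by observing that any witnessing path from $a$ to $b$ reaching a boundary vertex $w$ of $\cgB$ would force $a<w\le y_\cgB$, and that $b\le y_\cgB$ is ruled out for the same reason; and (3) $\Wleft(u)$ is $x_0$-right of $\Wleft(b)$, derived from $\Wleft(u)$ being $x_0$-right of $\Wleft(a)$ (since $(u,a)$ is a right pair) together with $a<b$ in $P$, using Propositions~\ref{pro:shortcuts} and~\ref{pro:consistent-paths-transitive}. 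Fact~(3) rules out $(u,b)$ being a left or outside pair, so $(u,b)$ is either a right pair or an inside pair.

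In the right-pair case, the sub-case $b\in B_\cgB$ is excluded because Proposition~\ref{pro:all-inc-are-inside} would then classify $(u,b)$ as an inside pair. Thus $b\in A_\cgB$, and $u$ witnesses that $b<c$ tilts right (via the same address $(j,\cgB)$), contradicting $(b,c)\in\NTR_\theta$. The inside-pair case is the main obstacle: I would show that $(u,b)$ is in fact dangerous with an address $(j'',\cgB'')$ for which $a\in A_{\cgB''}$, which makes $u$ witness that $a<b$ tilts right, contradicting $(a,b)\in\NTR_\theta$. When $b\in B_\cgB$, Proposition~\ref{pro:all-inc-are-inside} forces $(j'',\cgB'')=(j,\cgB)$, and Proposition~\ref{pro:j+1-dangerous-equiv} transfers the danger from $(u,c)$ to $(u,b)$ once the equality $\shad_{j+1}(b)=\shad_{j+1}(c)$ is verified; I would derive this equality from the parity constraint $\sd(b)\equiv j\mod 2$ (forcing $\sd(b)\ge j+2$), the comparability $b<c$, and Proposition~\ref{pro:shadow-comp}. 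When $b\in A_\cgB$, the address $(j'',\cgB'')$ sits strictly inside $\cgB$, and I would construct left and right neighbors of $(u,b)$ by adapting those of $(u,c)$ to $\cgB''$; the verification that $a\in A_{\cgB''}$ leans on $a<b$ and the nesting $\cgB''\subsetneq\cgB$. The delicate shadow-structure bookkeeping required to make this last sub-case rigorous is where I expect the proof to be most technical.
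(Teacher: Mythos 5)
Your irreflexivity argument and the overall architecture of your transitivity argument coincide with the paper's: take a witness $u$ for ``$a<_P c$ tilts right,'' note $u\parallel_P b$ and $b\in A_\cgB\cup B_\cgB$ for the block $\cgB$ in the address of $(u,c)$, and show that $u$ already witnesses tilting for $a<_P b$ or for $b<_P c$. Three of your four sub-cases are exactly the paper's proof: $(u,b)$ a right pair with $b\in B_\cgB$ is impossible by Proposition~\ref{pro:all-inc-are-inside}; $(u,b)$ a right pair with $b\in A_\cgB$ makes $b<_P c$ tilt right; and $b\in B_\cgB$ gives $\sd(b)\ge j+2$ by parity, hence $\shad_{j+1}(b)=\shad_{j+1}(c)$, hence $(u,b)$ dangerous by Proposition~\ref{pro:j+1-dangerous-equiv}, so $a<_P b$ tilts right. (A minor caveat on your fact~(3): Proposition~\ref{pro:consistent-paths-transitive} needs two strict ``$x_0$-left'' relations, whereas $a<_P b$ only rules out $\Wleft(a)$ being $x_0$-left of $\Wleft(b)$ and still allows a prefix relation; the paper obtains this step from Proposition~\ref{pro:piotrek}, which covers the prefix case.)

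The gap is the sub-case you defer: $(u,b)$ an inside pair with $b\in A_\cgB$. This configuration cannot occur, and the proof should close it with a short contradiction rather than by promoting $(u,b)$ to a dangerous pair at a nested address. Concretely: since $u$ and $b$ both lie in $A_\cgB$, each has $x_\cgB=\min(\cgB)$ among its common points (Proposition~\ref{pro:uWLWR}), so the depth of the inside pair $(u,b)$ is at least $j$ and therefore $u\in\shad_j(b)$; since $u\parallel_P b$, Proposition~\ref{pro:block-details}.\ref{pro:item:a-in-shad-a'} then forces $u\parallel_P z$ for every $z\in B_\cgB$. But $(u,c)$ is dangerous, so by Proposition~\ref{pro:dangerous} it has a left neighbor $(w,z)$ with $u<_P z$ and $z\in B_\cgB$ --- a contradiction. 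Your proposed alternative, building left and right neighbors of $(u,b)$ inside a block $\cgB''\subsetneq\cgB$, has no source material: the neighbors of $(u,c)$ live at the address $(j,\cgB)$ with second coordinates above $y_\cgB$, and nothing transports them to a strictly nested address. Since the configuration is vacuous, no such construction is needed, and attempting it would leave the proof stuck exactly where you predicted.
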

\begin{proof}
  We give the argument to show that $\NTR_\theta$ is a strict partial order. 
  The argument
  for $\NTL_\theta$ is symmetric.  We first note that $\NTR_\theta$ is irreflexive
  since $a<_P b$ whenever $(a,b)\in\NTR_\theta$.  It remains only to show
  that $\NTR_\theta$ is transitive.
  
  Let $(a,b),(b,c)\in\NTR_\theta$. 
  Then, $a<b< c$ in $P$, so
  $a< c$ in $P$. 
  It follows that $(a,c)\in\NTR_{\theta}$ unless the comparability $a<_P c$
  tilts right. 
  We assume that this is the case and argue to a contradiction.

  Let $u$ be an element of $P$ witnessing that the comparability $a<_P c$ tilts right, and 
  let $(j,\cgB)$ be the address of the dangerous pair $(u,c)$. 
  Then, we know that $a\in A_{\cgB}$, $c\in B_{\cgB}$. 
  Since $a<c$ in $P$, Proposition~\ref{pro:item:sda-more-than-j} implies that 
  $\sd(a)=j$. 
  Since $(a,b)\in \NTR_{\theta}$, we conclude that $j\equiv\theta\mod2$. 
  Since $a<_P b$ and $a\in A_\cgB$, we know that $b\in A_\cgB\cup B_\cgB$, 
  so we have identified two cases.

  \smallskip
  \noindent
  \textit{Case 1.}\quad $b\in A_{\cgB}$.

We assert that the element $u$ evidences that
  the comparability $b<_P c$ tilts right. 
  Since we already know that $(u,c)$ is dangerous and 
  $b\in A_{\cgB}$, all we need to show is that
  $(u,b)$ is a right pair.  
  Since $u\parallel_P \{a,c\}$, 
  and $a<_P b<_P c$, we know $u\parallel_P b$.

  Let $s$ be the largest element of $P$ common to $\Wleft(a)$ and $\Wleft(u)$. 
  Let $e^-$ and $e^+$ be the edges of $\Wleft(u)$ immediately before and immediately after $s$. Again, in case $s=x_0$ we set $e^-=e_{-\infty}$. 
  Let $e=ss'$ be the edge of $\Wleft(a)$ immediately after $s$. 
  Since $(u,a)$ is a right pair, $e^+$ is right of $e$ in the 
  $(s,e^-)$-ordering. 
  Note that $s < s' \leq a < b$ in $P$. 
  Thus, Proposition~\ref{pro:piotrek} implies that $\Wleft(b)$ is $x_0$-left of $\Wleft(u)$. 
  Therefore, $(b,u)$ is a left pair or $(b,u)$ is an inside pair. 
  We want to exclude the latter case.

  Suppose to the contrary that $(b,u)$ is an inside pair. 
  Then $b\in\shad_j(u)$. 
  Since $b\parallel_P u$, 
  Proposition~\ref{pro:block-details}.\ref{pro:item:a-in-shad-a'} implies 
  that $b\not< c'$ for all $c'\in B_{\cgB}$. 
  This contradicts that $b<c$ in $P$. 
  Thus, $(b,u)$ is not an inside pair.
  We conclude that $(u,b)$ is a right pair as desired.

  This of course contradicts the fact that $(b,c)\in\NTR_{\theta}$.

  \smallskip
  \noindent
  \textit{Case 2.}\quad $b\in B_{\cgB}$.

  We assert that the element $u$ evidences that 
  $a<_P b$ tilts right. 
  All we need to prove is that $(u,b)$ is a dangerous pair. 
  Note that $b\in B_{\cgB}$ implies $\sd(b)\ge j+1$. 
  Since $(b,c)\in\NTR_{\theta}$, we get $\sd(b)\equiv\theta\mod2$ which implies
  $\shad_j(b)\ge j+2$. 
  Now $b<c$ in $P$ implies that $\shad_{j+1}(b)=\shad_{j+1}(c)$.
  This combined with 
  Proposition~\ref{pro:j+1-dangerous-equiv} implies that
  $(u,b)$ is a dangerous pair. 
  Thus, $a<_P b$ tilts right.
  The contradiction completes the proof.
  \end{proof}

Note that all four strict orders $\NTL_0$, $\NTL_1$, $\NTR_0$, and $\NTR_1$ are strict suborders of $P$. 
Therefore, $D_0$ and $D_1$ are incomparable pairs in all four of them.

\begin{lemma}\label{lem:lo-NTR+NTL}
  Let $\theta\in\{0,1\}$. Then $D_{\theta}$ is reversible in $\NTR_{\theta}$ and $D_{\theta}$ is reversible in $\NTL_{\theta}$.
%
\end{lemma}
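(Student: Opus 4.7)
The plan is to assume for contradiction that $D_\theta$ is not reversible in $\NTR_\theta$, and via the Trotter--Moore criterion to extract a strict alternating cycle $((a_1,b_1),\dots,(a_k,b_k))$ in $\NTR_\theta$ with every pair in $D_\theta$ and $a_i<_{\NTR_\theta} b_{i+1}$ cyclically. Since $\NTR_\theta\subseteq P$ and $D_\theta\subseteq I_\theta$, this is also an alternating cycle in $P$ of inside pairs, so I would pass to a strict $P$-alternating sub-cycle $((a_{i_1},b_{i_1}),\dots,(a_{i_r},b_{i_r}))$ of $D_\theta$ pairs. Lemma~\ref{lem:cgI-comprehensive} then endows the sub-cycle with a common address $(j,\cgB)$: every $a_{i_\ell}$ lies in $A_\cgB$, every $b_{i_\ell}$ lies in $B_\cgB$, and $\sd(a_{i_\ell})=j$.

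The heart of the argument uses Proposition~\ref{pro:sac-comprehensive}: for distinct $\ell,\ell'$ the pair $(a_{i_\ell},a_{i_{\ell'}})$ is either a left pair or a right pair, and by the transitivity of right (and left) pairs from Proposition~\ref{pro:details-on-LR}, the relation ``right pair'' linearly orders $\{a_{i_1},\dots,a_{i_r}\}$. It then suffices to show that $(a_{i_\ell},a_{i_{\ell+1}})$ is a right pair for every $\ell$ cyclically, since that configuration is incompatible with a linear order. To prove this I would take the tilting witness $u=a_{i_{\ell+1}}$: because $(a_{i_{\ell+1}},b_{i_{\ell+1}})\in D_\theta$ is dangerous with address $(j,\cgB)$ and $a_{i_\ell}\in A_\cgB$, any right-pair relation $(a_{i_{\ell+1}},a_{i_\ell})$ forces $a_{i_\ell}<_P b_{i_{\ell+1}}$ to tilt right; whenever the edge $(a_{i_\ell},b_{i_{\ell+1}})$ still lies in $\NTR_\theta$ (as it does when $i_{\ell+1}=i_\ell+1$, coming straight from the original cycle), this contradicts membership in $\NTR_\theta$ and so $(a_{i_\ell},a_{i_{\ell+1}})$ must be a right pair.

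The main obstacle is the skipped-index case $i_{\ell+1}>i_\ell+1$, where the sub-cycle edge $a_{i_\ell}<_P b_{i_{\ell+1}}$ need not itself belong to $\NTR_\theta$. I plan to handle this by walking along the chain of original $\NTR_\theta$-edges $a_m<_{\NTR_\theta} b_{m+1}$ for $m=i_\ell,i_\ell+1,\dots,i_{\ell+1}-1$ and propagating the right-pair conclusion step by step, invoking Proposition~\ref{pro:j+1-dangerous-equiv} to transfer the dangerousness of the relevant pairs across the $b$-coordinates, all of which share the terminal block of $\shad_{j+1}$ with $b_{i_{\ell+1}}$; this keeps the address block $\cgB$ and the tilting witness available at each consecutive step. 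Once $(a_{i_\ell},a_{i_{\ell+1}})$ has been established as a right pair for every $\ell$ cyclically, transitivity of right pairs gives the contradiction, and the symmetric reasoning using tilts left handles reversibility in $\NTL_\theta$.
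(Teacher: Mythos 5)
Your closing mechanism---forcing a cyclic chain of right pairs among the $a$'s and playing it off against the tilting\nobreakdash-right clause in the definition of $\NTR_\theta$---is exactly the engine of the paper's proof, but the route you take to it has a genuine gap. By passing from the $\NTR_\theta$-alternating cycle to a strict $P$-alternating subcycle you gain the common address and the antichain property for free (via Lemma~\ref{lem:cgI-comprehensive} and Proposition~\ref{pro:sac-comprehensive}), but you lose the only thing the contradiction can bite on: the guarantee that the comparability $a_{i_\ell}<_P b_{i_{\ell+1}}$ does not tilt right. You flag this for the shortcut edges, but the proposed repair does not go through. To propagate along the original edges $a_m<b_{m+1}$ in $\NTR_\theta$ for $i_\ell\le m<i_{\ell+1}$ you would need (i)~that the intermediate pairs $(a_m,b_m)$ also have address $(j,\cgB)$---not automatic, since they are not on the strict $P$-subcycle, and establishing this is precisely the content of Claims~\ref{clm:NTR-1}--\ref{clm:NT-4} in the paper, which rest on minimality of the $\NTR_\theta$-cycle; (ii)~that $b_{m+1}\in B_\cgB$ and $\shad_{j+1}(b_{m+1})=\shad_{j+1}(b_{i_{\ell+1}})$ before Proposition~\ref{pro:j+1-dangerous-equiv} can transfer dangerousness---neither is known; and (iii)~a way to transport the hypothesized right pair $(a_{i_{\ell+1}},a_{i_\ell})$ to a right pair involving an intermediate $a_m$ that sits on an actual $\NTR_\theta$ edge. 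The intermediate $a_m$'s may be comparable to the subcycle $a$'s or form inside pairs with them, and moving a right pair along a comparability requires the Proposition~\ref{pro:piotrek} argument used in Case~1 of the proof of Proposition~\ref{pro:NTR-transitive} and in Claim~\ref{clm:NT-5}; none of this machinery appears in your sketch.

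The paper resolves the tension the other way around: it keeps a \emph{minimum-size} alternating cycle in $\NTR_\theta$, so that every edge is an honest $\NTR_\theta$ edge, and then proves by hand that this cycle has a common address and that $\{a_1,\dots,a_k\}$ is an antichain in $P$ (Claim~\ref{clm:NT-5}). That antichain claim is exactly the hard step your subcycle trick was meant to sidestep, and it is carried out by showing that a putative comparability $a_\alpha\le_P a_\beta$ would produce a strictly shorter $\NTR_\theta$-cycle, using Proposition~\ref{pro:piotrek} to control the right-pair relations. To rescue your route you would end up reproving that claim anyway, at which point the detour through the $P$-strict subcycle buys nothing.
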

\begin{proof}
  Fix $\theta\in\{0,1\}$.
  We show that $D_{\theta}$ is reversible in $\NTR_{\theta}$. 
  The argument for $\NTL_{\theta}$ is symmetric. 

  Suppose to the contrary that $D_{\theta}$ is not reversible in $\NTR_{\theta}$. 
  Let $((a_1,b_1),\dots,(a_k,b_k))$ be an alternating cycle in $\NTR_{\theta}$ with $(a_\alpha,b_\alpha)\in D_\theta$ for all $\alpha\in[k]$. 
  Therefore, for all $\alpha\in[k]$, either 
  $a_\alpha=b_{\alpha+1}$ or $(a_\alpha,b_{\alpha+1})\in\NTR_\theta$.
  Of all such cycles, we choose one for which $k$ is minimum. 
  Note that this automatically implies that the alternating cycle is strict.

  \begin{claim}\label{clm:NTR-1}
    There is no $\alpha\in[k]$ such that $(a_{\alpha+1},b_\alpha)\in D_\theta$.
  \end{claim}
  \begin{proof}
    The claim is clearly true when $k=2$, so suppose $k\geq 3$.
    If the claim fails, then let $(a_{\alpha+1},b_\alpha)\in D_\theta$ and consider
    \[
    (\ldots,(a_{\alpha-1},b_{\alpha-1}), (a_{\alpha+1},b_{\alpha}),(a_{\alpha+2},b_{\alpha+2}),\ldots).
    \]
    Thus, it is an alternating cycle in $\NTR_{\theta}$ of size $k-1$ with all pairs in $D_{\theta}$. 
    This contradicts the assumption that $k$ is minimum possible.
  \end{proof}
  \begin{claim}\label{clm:NTR-2}
    For all $\alpha\in[k]$, $(a_\alpha,b_{\alpha+1})\in\NTR_\theta$ (so $a_{\alpha}\neq b_{\alpha+1}$).
  \end{claim}
  \begin{proof}
    If the claim fails, then let $a_{\alpha}=b_{\alpha+1}$. 
    Since $(a_{\alpha+1},b_{\alpha+1}) \in D_{\theta}$ and 
    $(a_{\alpha},b_{\alpha})\in D_{\theta}$ and by transitivity of $D_{\theta}$ (see Proposition~\ref{pro:dangerous-transitive}), we conclude 
    $(a_{\alpha+1},b_{\alpha})\in D_{\theta}$. 
    This contradicts Claim~\ref{clm:NTR-1}.
  \end{proof}

  For each $\alpha\in[k]$, let $(j_\alpha,\cgB_\alpha)$ be the address of
  the dangerous pair $(a_\alpha,b_\alpha)$. We let
  $A_\alpha=A_{\cgB_\alpha}$ and $B_{\alpha}=B_{\cgB_\alpha}$.
  Then $a_\alpha\in A_\alpha$ and $b_\alpha\in B_\alpha$. 

  \begin{claim}\label{clm:NTR-3}
    For all $\alpha\in[k]$, $j_\alpha\le j_{\alpha+1}$.
  \end{claim}
  \begin{proof}
    Since $a_{\alpha}< b_{\alpha+1}$ in $\NTR_{\theta}$ 
    we conclude that $a_{\alpha}< b_{\alpha+1}$ in $P$. 
    This forces $b_{\alpha+1}\in A_\alpha\cup B_\alpha$.
    Suppose to the contrary that $j_{\alpha}> j_{\alpha+1}$. 
    Let $j=j_{\alpha+1}$. 

    Since $j_{\alpha}\equiv j_{\alpha+1}\equiv \theta \mod 2$, 
    we conclude that $j=j_{\alpha+1}\leq j_{\alpha}-2$. 
    Since $b_{\alpha}$ and $b_{\alpha+1}$ are together in $\cgB_{\alpha}$, 
    we know that $\sd(b_{\alpha}),\sd(b_{\alpha+1})\geq j_{\alpha} \geq j+2$ and 
    for all $0\leq i \leq j+1$, 
    we have $\shad_i(b_{\alpha})=\shad_{i}(b_{\alpha+1})$.

    Now consider the block $\cgB_{\alpha+1}$. 
    Note that $a_{\alpha+1}\in A_{\alpha+1}$, $b_{\alpha}, b_{\alpha+1}\in B_{\alpha+1}$. 
    Proposition~\ref{pro:j+1-dangerous-equiv} implies that 
    $(a_{\alpha+1},b_{\alpha}) \in D_{\theta}$. 
    Again, this 
    contradicts Claim~\ref{clm:NTR-1}.
  \end{proof}

  The last claim implies that there is an integer $j$ such that $j_\alpha=j$ for
  all $\alpha\in[k]$.  We have already noted that for each $\alpha\in[k]$,
  we know that $b_{\alpha+1}\in A_\alpha\cup B_\alpha$.
  \begin{claim}\label{clm:NT-4}
    For all $\alpha\in[k]$, $\cgB_{\alpha+1}=\cgB_\alpha$ if
    $b_{\alpha+1}\in B_\alpha$.  Furthermore,
    $\cgB_{\alpha+1}\subsetneq \cgB_\alpha$ if $b_{\alpha+1}\in A_\alpha$.
  \end{claim}
  \begin{proof}
    If $b_{\alpha+1}\in B_\alpha$, then $\cgB_{\alpha}$ is the terminal block of the $j$-shadow of $b_{\alpha+1}$. 
    Therefore, $\shad_j(b_{\alpha})=\shad_{j}(b_{\alpha+1})$ and 
    in particular $\cgB_\alpha=\cgB_{\alpha+1}$.

    Now suppose that $b_{\alpha+1}\in A_\alpha$.
    Thus $b_{\alpha+1}\parallel y_{\cgB_{\alpha}}$. 
    By Proposition~\ref{pro:shadow-comp}, 
    we know that the terminal block of $\shad_j(b_{\alpha+1})$ is strictly contained in 
    $\cgB_\alpha$, as desired.
  \end{proof}

  The preceding claim implies that there is a shadow block $\cgB$ such that
  $\cgB_\alpha=\cgB$, for all $\alpha\in[k]$.  
  Thus, for all $\alpha\in[k]$, the pair $(a_{\alpha},b_{\alpha})$ has address $(j,\cgB)$ and $a_\alpha\in A_\cgB$, $b_{\alpha}\in B_\cgB$.

  \begin{claim}\label{clm:NT-5}
    $\{a_1,\dots,a_k\}$ is an antichain in $P$.
  \end{claim}
  \begin{proof}
    Suppose to the contrary that $a_{\alpha} \leq a_{\beta}$ in $P$ for some $\alpha\neq\beta$.
    This implies that $a_{\alpha}<b_{\beta+1}$ in $P$. 
    Note that $\alpha\neq\beta+1$ as $a_{\alpha}\parallel b_{\alpha}$ in $P$.

    We assert that $a_{\alpha}<_P b_{\beta+1}$ does not tilt right. 
    Suppose to the contrary that it tilts right and let 
    $u$ be the witnessing element. 
    Thus $(u,b_{\beta+1})$ is dangerous and $(u,a_{\alpha})$ is a right pair. 
    Let $(j',\cgB')$ be the address of $(u,b_{\beta+1})$. 
    Suppose first that $j'<j$. 
    Then $u$ is in the interior of $\shad_{j'}(b_{\beta+1})=\shad_{j'}(a_{\alpha})$ but 
    this means that $(u,a_{\alpha})$ is an inside pair, a contradiction. 
    Therefore $j'\geq j$. 

    Observe now that $u\not\in B_{\cgB}$ as otherwise again $(u,a_{\alpha})$ would be an inside pair which is false. 
    This implies that $j'=j$. 
    Therefore $u$ must be in the terminal block of the $j$-shadow of $b_{\beta+1}$, namely $\cgB$. 
    We conclude that $(j',\cgB')=(j,\cgB)$. 

    In particular, $u\in A_{\cgB}$. 
    Since $a_{\alpha} \leq a_{\beta} < b_{\beta+1}$ in $P$, $u\parallel a_{\alpha}$ in $P$, 
    $u\parallel b_{\beta+1}$, we conclude that $u\parallel a_{\beta}$ in $P$. 
    Now we aim to show that $(u,a_{\beta})$ is a right pair. 
    Let $s$ be the largest element of $P$ common to $\Wleft(a_{\alpha})$ and $\Wleft(u)$. 
    Let $e^-$ and $e^+$ be the edges of $\Wleft(u)$ immediately before and immediately after $s$. 
    Again, in case $s=x_0$ we set $e^-=e_{-\infty}$. 
    Let $e=ss'$ be the edge of $\Wleft(a_{\alpha})$ immediately after $s$. 
    Since $(u,a_{\alpha})$ is a right pair, $e^+$ is right of $e$ in the 
  $(s,e^-)$-ordering. 
    Note that $s < s' \leq a_{\alpha} < a_{\beta}$ in $P$. 
    Thus, Proposition~\ref{pro:piotrek} implies that $\Wleft(u)$ is $x_0$-right of $\Wleft(a_{\beta})$. 
    Therefore, $(u,a_{\beta})$ is either a right pair or an inside pair.
    Suppose for a moment that $(u,a_{\beta})$ is an inside pair. 
    Then $u\in \shad_j(a_{\beta})$ and by 
    Proposition~\ref{pro:block-details}.\ref{pro:item:a-not-in-shad-b} 
    $u$ would be incomparable to all elements in $B_{\alpha}$. 
    This contradicts the fact that $(u,b_{\beta+1})$ is a dangerous pair.
    We conclude that $(u,a_{\beta})$ is a right pair, as desired.
    This implies that $(a_{\beta},b_{\beta+1})$ tilts right, 
    which is a contradiction.

    Therefore, we have shown that $a_{\alpha}< b_{\beta+1}$ in $P$ does not tilt right. 
    However, this now implies that $((a_{\beta+1},b_{\beta+1}),\dots,(a_{\alpha},b_{\alpha}))$
    is a smaller  alternating cycle (as $\alpha\neq\beta$) in $\NTR_{\theta}$.  The contradiction completes the proof of the claim.
  \end{proof}

  Since each element of $\{a_1,\dots,a_k\}$ is comparable
  with an element of $B_\cgB$, there cannot be distinct integers
  $\alpha,\beta\in[k]$ such that $(a_\alpha,a_\beta)$ is an inside pair. After a
  relabeling we may assume that $(a_\alpha,a_1)$ is a right pair in $P$
  for every $\alpha\in[k]$ with $\alpha\ge2$.  
  However, this implies
  that the comparability $a_1< b_2$ in $P$ tilts right 
  as is witnessed by a dangerous pair $(a_2,b_2)$. 
  The contradiction completes the 
  proof of the lemma.
\end{proof}

We use the preceding lemma to define the remaining linear orders:
\begin{align*}
  L_{10} &\quad\textrm{a linear extension of $\NTR_0$ reversing $D_0$},\\
  L_{11} &\quad\textrm{a linear extension of $\NTL_0$ reversing $D_0$},\\
  L_{12} &\quad\textrm{a linear extension of $\NTR_1$ reversing $D_1$},\\
  L_{13} &\quad\textrm{a linear extension of $\NTL_1$ reversing $D_1$}.
\end{align*}

To complete the proof of Theorem~\ref{thm:bdim-13}, we will explain why
the linear orders $(L_1,\dots,L_{13})$ constitute a Boolean realizer of $P$.
The Boolean formula will be clear from the explanation.

Let $a$ and $b$ be two elements of $P$.
\begin{enumerate}
  \item If $a\not\le b$ in $L_i$ for any $i\in[6]$, 
  we \emph{know} that $a\not\le b$ in $P$, and we output $0$.
  \item Otherwise, we assume that $a\le b$ in $L_i$ for all $i\in[6]$. 
  Then we know
    that either $a\le b$ in $P$ or $(a,b)$ is a dangerous pair.
  From the responses to the queries as to whether $a\le b$ in $L_i$ for
    $i=7,8,9$, we determine the parity of $\sd(a)$ 
    \emph{except} the case $a=b$.
  \item If $\sd(a)\equiv0\mod2$, 
  and $a\not\le b$ in both $L_{10}$ and $L_{11}$, then 
  we now \emph{know} $(a,b)\in D_0$, so $a\not\le b$ in $P$ and we output $0$. 
  Indeed, Proposition~\ref{pro:tilt-one-direction} implies that if 
  $a\leq b$ in $P$ then $a\leq b$ in at least one of $\NTR_0$, $\NTL_0$, and 
  this forces $a\leq b$ in at least one of $L_{10}$ and $L_{11}$.

  \item If $\sd(a)\equiv1\mod2$, 
  and $a\not\leq b$ in both $L_{12}$ and $L_{13}$, then we 
    now \emph{know} $(a,b)\in D_1$, so $a\not\le b$ in $P$ and we output $0$.  
  Indeed, Proposition~\ref{pro:tilt-one-direction} implies that if 
  $a\leq b$ in $P$ then $a\leq b$ in at least one of $\NTR_1$, $\NTL_1$, and 
  this forces $a\leq b$ in at least one of $L_{12}$ and $L_{13}$.

  \item Otherwise, we \emph{know} that either $a=b$ or $a\neq b$ and 
  $(a,b)$ is not a dangerous pair. Together with what we gathered so far, 
  we \emph{know} that $a\leq b$ in $P$. Therefore, we output $1$.
\end{enumerate}

This completes the proof of Theorem~\ref{thm:bdim-13}.

\section{Dimension and Standard Example Number}\label{sec:dim-bounded}

In this section, we give the second part of the proof of 
Theorem~\ref{thm:dim-boundedness}, i.e., we show that if $P$ is a poset 
with a unique minimal element and a planar cover graph, then 
$\dim(P)\le 2\se(P)+2$.  The initial set up is the same as in the last 
two sections.  We assume that $P$ is a poset with a planar cover graph and a 
unique minimal element denoted $x_0$.  We fix a plane drawing of the cover graph 
$G$ of $P$ with $x_0$ on the exterior face.  

In Section~\ref{sec:common}, we showed that
there are $2$ reversible subsets of $\Inc(P)$ that 
cover all incomparable pairs in $P$ except the \emph{inside}
pairs.  Recall that for $\theta\in\{0,1\}$, we let $I_\theta$ consist of all 
inside pairs $(a,b)$ such that if $(j,\cgB)$ is the address of $(a,b)$, then 
$j\equiv\theta\mod2$.  Then we have the partition $I_0\sqcup I_1$ of the
set of all inside pairs.   To complete the proof of
Theorem~\ref{thm:dim-boundedness}, we show that for each $\theta\in\{0,1\}$, the
set $I_\theta$ can be covered with $\se(P)$ reversible sets.


We need a preliminary result that is in the spirit of Proposition~\ref{pro:force-sides}.
\begin{proposition}\label{pro:force-sides-new}
  Let $\theta\in\{0,1\}$, and let $((a,b),(a',b'))$ be a strict alternating
  cycle of pairs from $I_\theta$ with $(a,a')$ a left pair.  Let $(j,\cgB)$ be
  the common address of $(a,b)$ and $(a',b')$, and let $q$ be an element of
  $B_\cgB$ such that $(q,b)$ is a left pair.  If $N$ is separating path associated
  with $a<_P b'$, then $q$ is right of $N$.
\end{proposition}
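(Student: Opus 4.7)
The plan is to reduce the statement to an argument parallel to Proposition~\ref{pro:force-sides}.\ref{pro:item:b'-left}, by first showing that $(q,b')$ is a left pair and that $q\parallel_P a$.

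First, I would establish that $(q,b')$ is a left pair. By Corollary~\ref{cor:sac-2} applied to the strict alternating cycle $((a,b),(a',b'))$ of pairs in $I_\theta$, since $(a,a')$ is a left pair so is $(b,b')$. Transitivity of left pairs (Proposition~\ref{pro:details-on-LR}.\ref{pro:item:left-pair-is-transitive}) applied to the left pairs $(q,b)$ and $(b,b')$ then yields that $(q,b')$ is a left pair; in particular, $q\parallel_P b'$.

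Next, I would show $q\parallel_P a$. The relation $q\leq_P a$ is immediately ruled out, since it would give $q\leq_P a\leq_P b'$, contradicting $q\parallel_P b'$. To rule out $a<_P q$, suppose for contradiction that $a<_P q$. The fact that $a\parallel_P y_\cgB$ forces $a\notin \Wleft(q)$ (as $\Wleft(q)$ passes through $y_\cgB$ and is a chain in $P$); combined with Proposition~\ref{pro:shortcuts}, this implies that $\Wleft(q)$ is $x_0$-left of $\Wleft(a)$. Together with $(a,a')$ being a left pair, Proposition~\ref{pro:consistent-paths-transitive} then gives that $\Wleft(q)$ is $x_0$-left of $\Wleft(a')$. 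Now I would split on the comparability of $q$ with $a'$: the case $q\leq_P a'$ is impossible since it would yield $q\leq_P a'\leq_P b$, contradicting $q\parallel_P b$; if $q\parallel_P a'$, then Proposition~\ref{pro:all-inc-are-inside} identifies $(a',q)$ as an inside pair in $I_\theta$ with address $(j,\cgB)$, so that $((a,b),(a',q))$ is a strict alternating cycle of pairs in $I_\theta$, and Corollary~\ref{cor:sac-2} then forces $(b,q)$ to be a left pair, contradicting the given $(q,b)$ left; the remaining case $a'<_P q$ is more delicate and appears to require a finer argument combining the planar shadow-block structure with Proposition~\ref{pro:block-details}.

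Finally, with $(q,b')$ a left pair and $q\parallel_P a$ in hand, I would mimic the proof of Proposition~\ref{pro:force-sides}.\ref{pro:item:b'-left}. Let $w$ be the largest element of $P$ common to $y_\cgB\Wleft(b')b'$ and $y_\cgB\Wleft(q)q$. Since $q\parallel_P b'$, we have $w<_P\{q,b'\}$; moreover $w<_P p$ (otherwise $a<_P p\leq_P w\leq_P q$ would contradict $q\parallel_P a$). Thus the edge of $\Wleft(q)$ immediately after $w$ is right of $N$. Assuming for contradiction that $q$ is not right of $N$, there exists $v\neq w$ lying on both $w\Wleft(q)q$ and $N$; but $v\in pNy_\cgB$ is ruled out by maximality of $w$, $v\in x_\cgB Na$ would yield $y_\cgB<_P v\leq_P a$ contradicting $a\parallel_P y_\cgB$, and $v\in aNp$ would yield $a\leq_P v\leq_P q$ contradicting $q\parallel_P a$. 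The contradiction gives that $q$ is right of $N$.

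The main obstacle is the subcase $a'<_P q$ in the second step, where the alternating-cycle reduction breaks down and a more refined geometric argument leveraging the planarity of $\cgB$ seems to be needed.
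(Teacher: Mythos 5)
There is a genuine gap, and you have correctly located it yourself: your reduction to the template of Proposition~\ref{pro:force-sides}.\ref{pro:item:b'-left} hinges on first establishing $q\parallel_P a$, and you are unable to close the case $a'<_P q$. The incomparability $q\parallel_P a$ is not among the hypotheses of the proposition, and it is genuinely load-bearing in your final paragraph (you use it both to obtain $w<_P p$ and to exclude the crossing point from $aNp$), so the argument as written does not go through.

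The paper's proof shows that $q\parallel_P a$ is never needed. It works directly with the peak $p$ of $N$: let $z$ be the largest element common to $\Wleft(q)$ and $\Wleft(p)$. Since $(q,b)$ and $(b,b')$ are left pairs, $z$ lies on $\Wleft(b)$, which forces $z<_P p$ (otherwise $a\le_P p=z\le_P b$, contradicting $a\parallel_P b$); and since $(q,b)$ is a left pair, the edge of $\Wleft(q)$ leaving $z$ is left of the edge of $\Wleft(p)$ leaving $z$, hence points to the right of $N$ (the segment $pNy_\cgB$ is traversed by $\Wleft(b')$ in reverse). If $q$ were not right of $N$, the path $z\Wleft(q)q$ would meet $N$ again at some $w\neq z$; by $x_0$-consistency and the choice of $z$, this $w$ lies on $x_\cgB Np$ with $w\neq p$, and then $x_0\Wleft(w)wNp$ is a witnessing path from $x_0$ to $p$ that is $x_0$-left of $\Wleft(p)$, contradicting leftmostness. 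This single leftmostness contradiction replaces both of your comparability-based exclusions and entirely avoids the unresolved question of whether $a<_P q$ can occur. Your preliminary observations (that $(b,b')$ and hence $(q,b')$ are left pairs) are correct and the first of them is also the paper's opening step, but you should abandon the $q\parallel_P a$ route in favor of the argument via the peak.
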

\begin{proof}
  From Proposition~\ref{pro:sac-comprehensive}, we know that $(b,b')$ is a left pair.
  Let $p$ be the peak of $N$, and let $z$ be the largest element of $P$ common to
  $\Wleft(q)$ and $\Wleft(p)$. Clearly, $y_{\cgB}\leq z \leq p$. 
  Note that $z\in \Wleft(b)$ as $(q,b)$ and $(b,b')$ are left pairs.
  This implies $z< p$ in $P$ as $z=p$ would give $a\leq p = z \leq b$ in $P$. 
  Let $e^-$, $e^+$ be the edges in $\Wleft(q)$ immediately before and immediately after $z$. 
  Let $e$ be the edge of $\Wleft(p)$ immediately after $z$. 
  Since $(q,b)$ is a left pair, we have that 
  $e^+$ is left of $e$ in the $(z,e^-)$-ordering. 
  Since $e$ lies in $N$, we conclude that $e^+$ is right of $N$. 
  If the element
  $q$ is not right of $N$, then $z\Wleft(q)q$ must contain
  an element $w$ from $N$ with $z\neq w$.  
  Since $\Wleft(p)$ and $\Wleft(q)$ are $x_0$-consistent,
  the element $w$ does not belong to $\Wleft(p)$. 
  Therefore, it belongs to $x_{\cgB}Np$ with
  $w\neq p$. 
  Now, the witnessing path $x_0\Wleft(w)wNp$ 
  provides an alternative witnessing
  path from $x_0$ to $p$ contradicting the property that $\Wleft(p)$ is the leftmost.
  The contradiction completes the proof.
\end{proof}

For each $\theta\in\{0,1\}$, we define a directed graph $H_\theta$ whose
vertex set is $I_\theta$.  In $H_\theta$, there are two kinds of edges.  The first
kind will be called a \emph{strong} edge.  When $(a,b)$ and $(b,c)$
are vertices from $I_\theta$, we have a strong edge in $H_\theta$
from $(a,b)$ to $(a',b')$ when $a< b'$ in $P$, $a'< b$ in $P$, and 
$(a,a')$ is a left pair.
We then have a \emph{weak} edge from $(a,b)$ to $(a',b')$ in $H_\theta$ when there
is a pair $(u,v)\in I_\theta$ such that $((a,b),(a',b'),(u,v))$ is
a strict alternating cycle, $(a,a')$ is a left pair, and $(a',u)$
is a left pair.  In discussing weak edges, we will refer to the
pair $(u,v)$ as a \emph{helper} pair for edge $((a,b),(a',b'))$.

When $n\ge1$, we will say that a sequence $((a_1,b_1),\dots,(a_n,b_n))$ is a 
directed path in $H_\theta$ of size~$n$ when there is a directed edge
from $(a_i,b_i)$ to $(a_{i+1},b_{i+1})$ whenever $1\le i<n$.  Also, we will
say that this directed path \emph{starts at $(a_1,b_1)$}.  Note that when
$n\ge2$, and $1\le i<n$, $(a_i,a_{i+1})$ will be a left pair, independent of 
whether the edge from $(a_i,b_i)$ to $(a_{i+1},b_{i+1})$ is strong or weak.

When $(a,b)\in I_\theta$, we consider a trivial sequence $((a,b))$ as a
directed path of size~$1$.  For a positive integer $m$, we let $I_\theta(m)$
consist of all pairs $(a,b)\in I_\theta$ such that the maximum size of a
directed path in $H_\theta$ starting at $(a,b)$ is $m$.

\begin{lemma}\label{lem:dim(J(m))} 
  For each $\theta\in\{0,1\}$ and each positive integer $m$, the set
  $I_\theta(m)$ is reversible.
\end{lemma}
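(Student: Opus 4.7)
The plan is to argue by contradiction. Suppose $I_\theta(m)$ is not reversible, and pick a strict alternating cycle $\mathcal{C}=((a_1,b_1),\dots,(a_k,b_k))$ of pairs from $I_\theta(m)$ with $k$ minimum. The first step is to invoke Proposition~\ref{pro:sac-comprehensive} and Lemma~\ref{lem:cgI-comprehensive} to obtain a common address $(j,\cgB)$ for every pair on $\mathcal{C}$, with $a_\alpha\in A_\cgB$ and $b_\alpha\in B_\cgB$; in particular every incomparable pair $(a_\alpha,b_\beta)$ with $\beta\neq\alpha+1$ cyclically lies in $I_\theta$ with the same address, by Proposition~\ref{pro:all-inc-are-inside}. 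Transitivity of left and right pairs (Proposition~\ref{pro:details-on-LR}) upgrades the tournament $\{(a_\alpha,a_\beta)\}$ into a total order, and similarly for $\{(b_\alpha,b_\beta)\}$; these two orders are coupled by the equivalence from Proposition~\ref{pro:sac-comprehensive} that $(a_\alpha,a_\beta)$ is a left pair if and only if $(b_{\alpha+1},b_{\beta+1})$ is a right pair.

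After a cyclic relabeling I take $a_1$ to be the leftmost element in the left-pair order on $\{a_1,\dots,a_k\}$ (so that $b_2$ is the rightmost in the $b$-order). The target is to exhibit an edge in $H_\theta$ from some cycle pair $(a_\alpha,b_\alpha)$ to another cycle pair $(a_\beta,b_\beta)$: since both pairs lie in $I_\theta(m)$, any such edge would extend the longest directed path starting at $(a_\beta,b_\beta)$ (of length $m$) to a path of length $m+1$ starting at $(a_\alpha,b_\alpha)$, contradicting $(a_\alpha,b_\alpha)\in I_\theta(m)$. When $k=2$, the strong edge $(a_1,b_1)\to(a_2,b_2)$ is immediate from the definition, since $(a_1,a_2)$ is a left pair and the two comparabilities $a_1<b_2$, $a_2<b_1$ are automatic.

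For $k\ge3$, the intended construction is the weak edge $(a_\alpha,b_\alpha)\to(a_{\alpha+1},b_{\alpha+1})$ with cycle-pair helper $(u,v)=(a_{\alpha-1},b_{\alpha+2})$. Strictness of $\mathcal{C}$ makes the triple $((a_\alpha,b_\alpha),(a_{\alpha+1},b_{\alpha+1}),(a_{\alpha-1},b_{\alpha+2}))$ a strict alternating cycle of size~3, and the helper belongs to $I_\theta$ by Proposition~\ref{pro:all-inc-are-inside}; the weak edge exists precisely when both $(a_\alpha,a_{\alpha+1})$ and $(a_{\alpha+1},a_{\alpha-1})$ are left pairs, equivalently, when $a_{\alpha+1}$ is strictly between $a_\alpha$ and $a_{\alpha-1}$ in the left-pair order. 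A short pigeonhole argument on the permutation that sorts $\{a_1,\dots,a_k\}$ into its left-pair order shows that such an $\alpha$ exists for most shapes of the permutation, in particular whenever an ascent from $a_1$ in cyclic order is followed by a further ascent.

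The main obstacle will be the residual configurations (visible already at $k=3$ with left-pair order $a_1,a_3,a_2$) in which no cycle-pair helper succeeds. I plan to resolve these via the geometric propositions: for each comparability $a_\gamma<b_{\gamma+1}$ in $\mathcal{C}$, I would introduce a separating path $N_\gamma$ of $\cgB$ associated with it, and then apply Proposition~\ref{pro:force-sides} to pin down every remaining $a_\beta$ and $b_\beta$ on the forced side of every $N_\gamma$. Combining this with Proposition~\ref{pro:force-sides-new} applied to the 2-sub-cycles drawn from the incomparable pairs in $A_\cgB\times B_\cgB$ (for instance $((a_1,b_3),(a_2,b_2))$ when $k=3$), the resulting planar layout will either expose an external helper pair $(u,v)\in I_\theta$ with $(a_2,u)$ a left pair that witnesses the missing weak edge, or force two witnessing paths inside $\cgB$ to cross in a way that violates planarity. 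Executing this geometric case analysis across all the pathological permutations is the hardest step of the proof.
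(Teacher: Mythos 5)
Your setup matches the paper's: the common address via Lemma~\ref{lem:cgI-comprehensive}, the membership of all pairs $(a_\alpha,b_\beta)$ with $\beta\neq\alpha+1$ in $I_\theta$ via Proposition~\ref{pro:all-inc-are-inside}, the $k=2$ case via a strong edge, and the use of cycle-pair helpers to build weak edges when the left-pair order cooperates. The paper's Case~1 (with $a_1$ leftmost and $(a_2,a_k)$ a left pair) is exactly your ``cycle-pair helper succeeds'' situation, with helper $(a_k,b_3)$.

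However, there is a genuine gap: you correctly isolate the residual configuration (already at $k=3$ with left-pair order $a_1,a_3,a_2$, i.e.\ $(a_k,a_2)$ a left pair) in which no helper drawn from the cycle produces a weak edge, but you do not resolve it; you only sketch a plan (``expose an external helper pair \dots or force two witnessing paths to cross''), and that plan misses the idea the paper actually uses. The paper does \emph{not} hunt for another helper or a planarity contradiction among the cycle pairs. Instead it takes a maximum directed path $((w_1,z_1),\dots,(w_m,z_m))$ in $H_\theta$ starting at $(w_1,z_1)=(a_2,b_2)$, replaces its first vertex by the off-cycle pair $(a_k,b_2)$ (which lies in $I_\theta$ because $(a_k,b_2)\in A_\cgB\times B_\cgB$ is incomparable), prepends $(a_1,b_1)$ via a strong edge, and then shows that the splice $(a_k,b_2)\to(w_2,z_2)$ is a strong edge. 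That last step splits on whether the original edge $(w_1,z_1)\to(w_2,z_2)$ was strong or weak: in the strong subcase, a separating path $N$ for $w_1<_P z_2$ together with Propositions~\ref{pro:force-sides} and~\ref{pro:force-comp} yields the missing comparability $a_k<_P z_2$; in the weak subcase, Proposition~\ref{pro:force-sides-new} applied to $b_3$ and the separating path for $w_2<_P v$ (where $(u,v)$ is the helper of that weak edge) produces the contradiction $w_1<_P v$. The essential point you are missing is that the argument must reach \emph{outside} the alternating cycle and manipulate the vertices of the maximal path itself; a case analysis confined to the ``pathological permutations'' of $\{a_1,\dots,a_k\}$ and their separating paths does not by itself produce the required longer path in $H_\theta$, so as written the proof is incomplete.
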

\begin{proof}
  Fix $\theta\in\set{0,1}$ and an integer $m\ge1$. 
  We then assume that $((a_1,b_1),\dots,(a_k,b_k))$
  is a strict alternating cycle of pairs from $I_\theta(m)$ and argue to
  a contradiction.  

  Using Lemma~\ref{lem:cgI-comprehensive}, we know that there
  is a pair $(j,\cgB)$ that is the common address of all the pairs on the cycle. 
  We also know that for every $\alpha\in[k]$,
  $a_\alpha\in A_\cgB$ and $b_\alpha\in B_\cgB$. 
  It follows by Proposition~\ref{pro:all-inc-are-inside} that if $\alpha,\beta\in[k]$,
  and $\beta\neq\alpha+1$, then $(a_\alpha,b_\beta)$ is a pair in $I_\theta$ and it
  has address $(j,\cgB)$. 
  
  Proposition~\ref{pro:sac-comprehensive} implies that $(a_{\alpha},a_{\beta})$ is a left pair or a right pair for all $\alpha\neq \beta$ in $[k]$. 
  Therefore we can assume that $a_1$ is the lefmost, i.e.\ 
  $(a_1,a_\alpha)$ is a left pair, whenever $2\le\alpha\le k$.
  Since $(a_k,b_2)\in I_{\theta}$, we have a strong edge 
  from $(a_1,b_1)$ to $(a_k,b_2)$ in $H_{\theta}$.
 Therefore, $m\ge2$.

  Now suppose $k=2$.  Then there is a strong directed edge in $H_\theta$ from
  $(a_1,b_1)$ to $(a_2,b_2)$.  Furthermore, a directed path of size~$m$
  starting at $(a_2,b_2)$ can be extended to a directed path of size~$m+1$
  starting at $(a_1,b_1)$ simply by prepending $(a_1,b_1)$ at the beginning. 
  This implies that $(a_1,b_1)\not\in I_{\theta}(m)$. 
  The contradiction shows that $k\ge3$.

  Consider the $3$-element antichain $\{a_1,a_2,a_k\}$.  
  We know that $(a_1,a_2)$ and $(a_1,a_k)$ are left pairs.  We also know
  that one of $(a_2,a_k)$ and $(a_k,a_2)$ is a left pair.
  
  \smallskip
  \noindent
  \textit{Case 1.}\quad $(a_2,a_k)$ is a left pair.

  \smallskip

  Then consider 
  the strict alternating cycle $((a_1,b_1),(a_2,b_2),(a_k,b_3))$, noting
  that $(a_1,a_2)$ and $(a_2,a_k)$ are left pairs.  It follows that
  there is a weak directed edge in $H_\theta$ from $(a_1,b_1)$ to
  $(a_2,b_2)$, with the pair $(a_k,b_3)$ being the helper pair.  This implies that 
  a directed path of size~$m$ starting at $(a_2,b_2)$ can be extended to a 
  directed path of size~$m+1$ starting at $(a_1,b_1)$ simply by prepending 
  $(a_1,b_1)$ at the front.  This implies $(a_1,b_1)\not\in I_{\theta}(m)$, 
  a contradiction.

  \smallskip
  \noindent
  \textit{Case 2.}\quad $(a_k,a_2)$ is a left pair.

  \smallskip

  It follows from Proposition~\ref{pro:sac-comprehensive}
  that $(b_3,b_1)$ and $(b_1,b_2)$ are left pairs. 
  Let $((w_1,z_1),\dots,(w_m,z_m))$ be a directed path of
  size~$m$ in $H_\theta$ with $(w_1,z_1)=(a_2,b_2)$.  Then consider the 
  sequence 
  \[ 
  ((a_1,b_1),(a_k,b_2),(w_2,z_2),\dots,(w_m,z_m)) 
  \]
  of pairs from $I_\theta$. (Recall that $m\geq2$ so $(w_2,z_2)$ exists.)  
  Clearly, this sequence has size $m+1$, and it 
  starts with $(a_1,b_1)$.  
  We make the following observations: 
  (1) there is a strong directed
  edge in $H_\theta$ from $(a_1,b_1)$ to $(a_k,b_2)$; and 
  (2)~if $2\le i<m$, there
  is a directed edge in $H_\theta$ from $(w_i,z_i)$ to 
  $(w_{i+1},z_{i+1})$.  Accordingly, the only edge missing so far in $H_{\theta}$ 
  to complete the directed path on our sequence is from $(a_k,b_2)$ to $(w_2,z_2)$.   

  Recall that $(a_k,a_2)$ is a left pair and 
  $w_2 < z_1 = b_2$ in $P$. Thus, if $a_k< b_2$ in $P$ then we will complete 
  a strong edge from $(a_k,b_2)$ to $(w_2,z_2)$.

  We split the argument again depending on the type of edge in $H_{\theta}$
  from $(w_1,z_1)$ to $(w_2,z_2)$. 

  First, we assume that $((w_1,z_1),(w_2,z_2))$ is strong.
  Let $N$ be a separating path in the shadow block $\cgB$ 
  associated with the comparability $a_2=w_1<_P z_2$. 
  Since $(a_k,a_2)$ is a left pair, 
  Proposition~\ref{pro:force-sides} implies $a_k$ is left of $N$.
  We know that $(b_1,b_2)$ is a right pair, $b_2=z_1$, and $(z_1,z_2)$ is a right pair.
  Thus $(b_1,z_2)$ is a right pair.  Since $b_1\parallel a_2$ in $P$,
  Proposition~\ref{pro:force-sides} implies that $b_1$ is right of $N$.

  For the comparability $a_k< b_1$ in $P$, we now have $a_k$ left of $N$, and $b_1$
  right of $N$.  Proposition~\ref{pro:force-comp} then implies that $a_k< b_2$ in $P$.
  This comparability, completes requirements for a strong directed edge from $(a_k,b_2)$ to $(w_2,z_2)$ in $H_{\theta}$.
  Therefore, we have completed a directed path in $H_{\theta}$ from $(a_1,b_1)$ of length $m+1$. 
  This contradicts the fact that $(a_1,b_1)\in I_{\theta}(m)$.

  Now, we assume that $((w_1,z_1),(w_2,z_2))$ is weak.

  Then let $((w_1,z_1),(w_2,z_2),(u,v))$ be a strict alternating cycle
  evidencing that this weak edge in $H_\theta$.
  Then, again by Lemma~\ref{lem:cgI-comprehensive}, 
  $(u,v)$ is an inside pair whose address is $(j,\cgB)$, $u\in A_{\cgB}$, 
  and $v\in B_{\cgB}$.  
  Also, $(w_1,w_2)$ and $(w_2,u)$ are left pairs.
  
  We note that $(b_3,b_1)$ and $(b_1,b_2)$ are left pairs.
  Therefore, $(b_3,b_2)$ is a left pair.
  We apply Proposition~\ref{pro:force-sides-new} for the strict alternating
  cycle $((w_2,z_1),(u,v))$, the separating path $N$ associated with the comparability
  $w_2<_P v$, and the element $b_3$.  The proposition implies that $b_3$ is right of $N$.
  On the other hand, since $(a_2,w_2)=(w_1,w_2)$ is a left pair, Proposition~\ref{pro:force-sides}
  implies $a_2$ is left of $N$.  Using Proposition~\ref{pro:force-comp} for
  the comparability $a_2<_P b_3$, we conclude that $w_1=a_2<_P v$, which is
  false. The contradiction completes the proof.
\end{proof}

\begin{proposition}\label{pro:strong-transitive}
  Let $\theta\in\{0,1\}$, 
  and let
  $((a_1,b_1),(a_2,b_2))$, $((a_2,b_2),(a_3,b_3))$ be strong edges in $H_\theta$.  
  Then $((a_1,b_1),(a_3,b_3))$ is a strong edge in $H_{\theta}$ as well.
\end{proposition}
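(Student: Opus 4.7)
The plan is to first identify the common geometric data shared by the three pairs, then establish the left-pair structure on the $a_i$'s and $b_i$'s, and finally derive the two missing comparabilities via separating-path arguments.

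First I would observe that each of the two given strong edges yields a strict alternating cycle of size two in $I_\theta$, namely $((a_1,b_1),(a_2,b_2))$ and $((a_2,b_2),(a_3,b_3))$, with $(a_1,a_2)$ and $(a_2,a_3)$ left pairs. Applying Lemma~\ref{lem:cgI-comprehensive} to each, both cycles acquire a common address. Since $(a_2,b_2)$ appears in both cycles and has a uniquely determined address, the two addresses coincide; call this common address $(j,\cgB)$. Then $a_1,a_2,a_3\in A_\cgB$ and $b_1,b_2,b_3\in B_\cgB$.

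Next I would establish the side structure. Since $(a_1,a_2)$ and $(a_2,a_3)$ are left pairs, Proposition~\ref{pro:details-on-LR}.\ref{pro:item:left-pair-is-transitive} shows $(a_1,a_3)$ is a left pair. Applying Proposition~\ref{pro:sac-comprehensive}.\ref{pro:item-a-left-iff-b-right} to each of the two strict alternating cycles gives that $(b_1,b_2)$ and $(b_2,b_3)$ are left pairs, and transitivity again yields that $(b_1,b_3)$ is a left pair. It remains only to verify $a_1<_P b_3$ and $a_3<_P b_1$; together with the left pair $(a_1,a_3)$ this gives the required strong edge from $(a_1,b_1)$ to $(a_3,b_3)$.

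For $a_1<_P b_3$ I would suppose for contradiction that $a_1\not<_P b_3$. The case $a_1>_P b_3$ is ruled out immediately: $a_1\in A_\cgB$ gives $a_1\parallel_P y_\cgB$, while $b_3\in B_\cgB$ gives $b_3\ge_P y_\cgB$, and $a_1>_P b_3$ would force $a_1>_P y_\cgB$. Hence $a_1\parallel_P b_3$. Let $M$ be a separating path associated with $a_1<_P b_2$. Then by Proposition~\ref{pro:force-sides}, $a_2$ is right of $M$ (since $(a_2,a_1)$ is a right pair) and $b_3$ is left of $M$ (since $(b_3,b_2)$ is a right pair and $b_3\parallel_P a_1$). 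Since $a_2<_P b_3$, Proposition~\ref{pro:force-comp} forces $a_2<_P b_2$, contradicting $a_2\parallel_P b_2$. The argument for $a_3<_P b_1$ is symmetric: assuming $a_3\parallel_P b_1$, let $M'$ be a separating path associated with $a_3<_P b_2$; then $a_2$ is left of $M'$ (since $(a_2,a_3)$ is a left pair) and $b_1$ is right of $M'$ (since $(b_1,b_2)$ is a left pair and $b_1\parallel_P a_3$), and $a_2<_P b_1$ combined with Proposition~\ref{pro:force-comp} again forces $a_2<_P b_2$, a contradiction.

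The main obstacle is the bookkeeping: one has to choose the separating paths, direction of pair-swapping (left vs.\ right), and the side placements so that Proposition~\ref{pro:force-comp} produces the clean contradiction $a_2<_P b_2$. The key insight making it work is that the separating path is always associated with a comparability landing at $b_2$, so that the forced comparability collides with the incomparability of the middle pair $(a_2,b_2)$. Once these choices are pinned down, the rest is routine application of Propositions~\ref{pro:force-sides} and~\ref{pro:force-comp}.
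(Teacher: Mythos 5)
Your proof is correct and follows essentially the same route as the paper: establish the common address via Lemma~\ref{lem:cgI-comprehensive}, get the left pairs among the $a_i$'s and $b_i$'s from transitivity and Proposition~\ref{pro:sac-comprehensive}, and then obtain the missing comparabilities from Propositions~\ref{pro:force-sides} and~\ref{pro:force-comp}. The only (harmless) difference is the choice of separating path: the paper anchors $N$ at the comparability $a_2<_P b_3$ and deduces $a_1<_P b_3$ directly from $a_1<_P b_2$, whereas you anchor at $a_1<_P b_2$ and argue by contradiction against $a_2\parallel_P b_2$.
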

\begin{proof}
  Since $(a_1,a_2)$ and $(a_2,a_3)$ are
  left pairs, we know that $(a_1,a_3)$ is a left pair. 
  It suffices
  to show that $a_1< b_3$ in $P$ and $a_3< b_1$ in $P$. 
  We give the argument to show
  that $a_1< b_3$ in $P$.  The argument for $a_3< b_1$ in $P$ is symmetric.

  By Lemma~\ref{lem:cgI-comprehensive}, we can fix $(j,\cgB)$ to be 
  the common address of the three pairs on the path.  Then
  let $N$ be a separating path in the shadow block $\cgB$ associated with
  the comparabilty $a_2< b_3$ in $P$.  Since $(a_1,a_2)$ is a left pair, 
  Proposition~\ref{pro:force-sides} implies that $a_1$ is left of $N$.
  Since $(b_2,b_3)$ is a left pair, and $b_2\parallel a_2$ in $P$, 
  Proposition~\ref{pro:force-sides} also implies that $b_2$ is right of $N$.
  Since $a_1$ is left of $N$, $b_2$ is right of $N$, and $a_1< b_2$ in $P$,
  Proposition~\ref{pro:force-comp} implies $a_1< b_3$ in $P$. With this observation,
  the proof is complete.
\end{proof}
Let $n\ge2$, and let $((a_1,b_1),\dots,(a_n,b_n))$ be a directed
path in $H_\theta$ with each edge $((a_i,b_i),(a_{i+1},b_{i+1}))$ being strong 
for $i\in[n-1]$.  Proposition~\ref{pro:strong-transitive} implies
that the points in $\{a_1,\dots,a_n,b_1,\dots,b_n\}$ determine a copy of
the standard example $S_n$.  

Now we are missing just one final piece to complete the proof.

\begin{proposition}
  For each $\theta\in\{0,1\}$, 
  $H_{\theta}$ has no directed path on more than $\se(P)$ vertices.
\end{proposition}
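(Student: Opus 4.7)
The plan is to show that any directed path $((a_1,b_1),\ldots,(a_n,b_n))$ of length $n$ in $H_\theta$ forces $P$ to contain a copy of the standard example $S_n$, which by the definition of $\se(P)$ gives $n\le\se(P)$.

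First, every edge of the path is witnessed by a strict alternating cycle of pairs from $I_\theta$ (a $2$-element cycle for strong edges, a $3$-element cycle including the helper for weak edges), so Lemma~\ref{lem:cgI-comprehensive} forces all pairs on the path to share a common address $(j,\cgB)$; in particular each $a_i\in A_\cgB$ and each $b_i\in B_\cgB$. Moreover, consecutive $(a_i,a_{i+1})$ are left pairs by the definition of either edge type, and consecutive $(b_i,b_{i+1})$ are left pairs by Corollary~\ref{cor:sac-2} for strong edges and by Proposition~\ref{pro:sac-comprehensive}\ref{pro:item-a-left-iff-b-right} applied to the defining $3$-cycle for weak edges. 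Transitivity of left pairs (Proposition~\ref{pro:details-on-LR}) then makes both $\{a_i\}_{i\in[n]}$ and $\{b_i\}_{i\in[n]}$ antichains.

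In the all-strong case, Proposition~\ref{pro:strong-transitive} composes inductively to give $a_i<_P b_j$ for all $i\ne j$, and $\{(a_i,b_i)\}$ realizes $S_n$ directly. With weak edges matters are more subtle: for a weak edge $((a_i,b_i),(a_{i+1},b_{i+1}))$ with helper $(u_i,v_i)$, the strict alternating cycle rules out $a_{i+1}\le_P b_i$, so the pairs $(a_i,b_i)$ alone cannot form $S_n$. One does, however, have $a_i<_P b_{i+1}$ and $u_i<_P b_i$, so $(a_i,b_i)$ and $(u_i,b_{i+1})$ already form an $S_2$ via the helper. Guided by this, the plan is to keep $\{b_1,\ldots,b_n\}$ as the top antichain and to choose a bottom antichain $\{c_1,\ldots,c_n\}$ defined by $c_1=a_1$ and, for $i\ge 2$, $c_i=u_{i-1}$ if the $(i-1)$-th edge is weak and $c_i=a_i$ otherwise. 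The incomparabilities $(c_i,b_i)$ and the consecutive cross relations $c_i<_P b_{i+1}$, $c_{i+1}<_P b_i$ then follow directly from the alternating-cycle conditions, and the remaining long-range cross comparabilities $c_i<_P b_j$ for $|j-i|\ge 2$ would be obtained by induction, using separating paths inside $\cgB$ together with Propositions~\ref{pro:force-sides} and~\ref{pro:force-comp}, in the style of Proposition~\ref{pro:strong-transitive}.

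The main obstacle, and what I expect to be the hardest step, is verifying that $\{c_1,\ldots,c_n\}$ is genuinely an antichain once the $c_i$'s mix values of the form $u_{i-1}$ with values of the form $a_k$: a priori, a relation such as $u_{i-1}<_P a_k$ could wreck the antichain property. I expect that handling this will require combining the leftmost-path machinery (Propositions~\ref{pro:Tleft-and-Tright} and~\ref{pro:consistent-paths-transitive}) with planar separating-path arguments inside $\cgB$, possibly refining the definition of the $c_i$'s using additional helpers, in the spirit of the case analyses inside the proof of Lemma~\ref{lem:dim(J(m))}. Once the antichain property of $\{c_i\}$ and all cross comparabilities $c_i<_P b_j$ for $i\ne j$ are secured, $\{(c_i,b_i):i\in[n]\}$ is a copy of $S_n$ in $P$, forcing $n\le\se(P)$.
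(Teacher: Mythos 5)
Your high-level idea is the right one -- use the helper pairs to upgrade the directed path to a configuration that realizes $S_n$ -- but the proposal has a genuine gap, and in fact two of its concrete claims are problematic. First, the claim that the consecutive cross relation $c_i<_P b_{i+1}$ ``follows directly from the alternating-cycle conditions'' is false when $c_i=u_{i-1}$: the strict alternating cycle $((a_{i-1},b_{i-1}),(a_i,b_i),(u_{i-1},v_{i-1}))$ only gives $u_{i-1}\le_P b_{i-1}$, and says nothing about $u_{i-1}$ versus $b_{i+1}$; that relation is exactly as hard as the long-range ones you defer. Second, the antichain problem you flag for $\{c_1,\dots,c_n\}$ is not a technicality to be patched later -- it is the crux, and your construction makes it genuinely hard: the weak-edge definition gives you left pairs $(a_{i-1},a_i)$ and $(a_i,u_{i-1})$, so $u_{i-1}$ sits to the \emph{right} of $a_i$, and nothing in the machinery orders $u_{i-1}$ against $a_{i+1}$ or against another helper bottom $u_i$; you neither get incomparability nor the left-pair structure that Propositions~\ref{pro:force-sides} and~\ref{pro:force-comp} need. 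Since the two hardest steps are left as ``I expect,'' this is a plan rather than a proof.

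The paper resolves precisely this difficulty by doing the mirror image of your substitution: it keeps the \emph{bottom} elements $a_1,\dots,a_n$ untouched (they are pairwise left pairs by transitivity along the path, hence automatically an antichain) and, when the edge from $(a_s,b_s)$ to $(a_{s+1},b_{s+1})$ is weak with helper $(u,v)$, replaces the \emph{top} $b_s$ by the helper's top $v$, yielding the pair $(a_s,v)$. With that choice the new comparabilities $a_{s+1}<_P v$ and $a_s<_P b_{s+1}$ come free from the alternating cycle, and only one comparability per weak edge ($u_{s-1}<_P v$, i.e.\ $a_{s-1}<_P v$) requires the separating-path argument via Propositions~\ref{pro:force-sides} and~\ref{pro:force-comp}; the result is an all-strong directed path of the same length, to which Proposition~\ref{pro:strong-transitive} applies to produce $S_n$. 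If you want to salvage your version, you would have to establish both the mutual incomparability and the left-pair ordering among the mixed set of $a_k$'s and $u_{i-1}$'s, which the asymmetry in the definition of a weak edge (the left-pair condition is imposed on the helper's bottom $u$, not its top $v$) does not hand you.
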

\begin{proof}
  Let $\theta\in\set{0,1}$ and
  let $((a_1,b_1),\ldots,(a_n,b_n))$ be a directed path in $H_{\theta}$. 
  Let $(j,\cgB)$ be the common address of all pairs (and helper pairs) in the path 
  (it exists by Lemma~\ref{lem:cgI-comprehensive}).
  We show that $H_\theta$ contains a directed path of the same length 
  with all edges being strong. 
  This will determine a copy of $S_n$ in $P$.
  Therefore, $n\le\se(P)$, as desired.

  If the directed edge from $(a_1,b_1)$ to $(a_2,b_2)$ is strong,
  we set $(u_1,v_1)=(a_1,b_1)$ and $(u_2,v_2)=(a_2,b_2)$.
  If the directed edge from $(a_1,b_1)$ to $(a_2,b_2)$ is weak, and
  is evidenced by the strict alternating cycle $((a_1,b_1),(a_2,b_2),(u,v))$,
  we set
  $(u_1,v_1)=(a_1,v)$ and $(u_2,v_2)=(a_2,b_2)$. 
  Recall that $a_1\in A_{\cgB}$ and $v\in B_{\cgB}$, 
  therefore by Proposition~\ref{pro:all-inc-are-inside} 
  $(a_1,v)$ is a vertex in $H_{\theta}$.

  In both cases, the
  following statements hold when $s=2$:
  \begin{enumerate}
    \item $(u_s,v_s)=(a_s,b_s)$.
    \item $((u_1,v_1),\dots,(u_s,v_s))$ is a strong directed path of size~$s$ in 
      $H_\theta$.
  \end{enumerate}

  Now we continue the construction as long as  $2\le s< n$ 
  and we will keep two items above as invariants. 

  If the directed edge from $(a_s,b_s)$ to $(a_{s+1},b_{s+1})$ is
  strong, we simply set $(u_{s+1},v_{s+1})=(a_{s+1},b_{s+1})$ and continue.

  Now suppose the directed edge from $(a_s,b_s)$ to $(a_{s+1},b_{s+1})$ is weak
  and is evidenced by the strict alternating cycle $((a_s,b_s),(a_{s+1},b_{s+1}),
  (u,v))$ with $(a_s,a_{s+1})$ and $(a_{s+1},u)$ both being left pairs. 
  Then we set $(u_{s+1},v_{s+1})=(a_{s+1},b_{s+1})$, so that the
  first statement of our inductive hypothesis is satisfied. 
  However, to maintain the
  second statement, 
  if $s\geq3$ then we update the choice made for the pair $(u_s,v_s)$, i.e.\ 
  $(u_s,v_s)=(a_s,v)$.

  To complete the proof, we need only to show that the two requirements are
  satisfied by the updated path.  The first requirement, i.e.,
  $(u_{s+1},v_{s+1})=(a_{s+1},b_{s+1})$ is satisfied trivially.
  For the second, we note that there is a strong directed edge
  from $(a_s,v)$ to $(a_{s+1},b_{s+1})$.  It suffices to show
  that if $s\ge3$, there is a strong edge from $(u_{s-1},v_{s-1})$ to
  $(a_s,v)$.

  We know $(u_{s-1},a_s)$ is a left pair. 
  We also know that 
  $a_s< v_{s-1}$ in $P$. 
  It suffices to show that $u_{s-1}< v$ in $P$. 
  Let $N$ be a separating path in $\cgB$ associated with
  the comparability $u_{s+1}< v$ in $P$.  Since $(u_{s-1},u_{s+1})$ is
  a left pair, Proposition~\ref{pro:force-sides} implies $u_{s-1}$ is
  left of $N$.  Since $(v_s,v)$ is a left pair, and $v_s\parallel u_{s+1}$ in $P$,
  Proposition~\ref{pro:force-sides} implies that $v_s$ is right of $N$.
  Since $u_{s-1}< v_s$ in $P$, Proposition~\ref{pro:force-comp} implies
  $u_{s-1}< v$ in $P$.  With this observation, the proof that the inductive
  construction can continue.
\end{proof}

We have now shown that for each $\theta\in\{0,1\}$ and each $m\ge1$, the
set $I_\theta(m)$ is reversible.  We have also shown that $I_\theta(m)=
\emptyset$ when $m>\se(P)$. 

Together, these statements complete
the proof of Theorem~\ref{thm:dim-boundedness}, i.e.\ the upper bound $\dim(P)\le 2\se(P)+2$ when $P$ is
a poset with a planar cover graph and a unique minimal element.

For future reference, we state the following corollary.
\begin{corollary}
Let $P$ be a poset with a planar cover graph and 
let $x_0$ be a unique minimal element in $P$.
Fix a planar drawing of the cover graph of $P$ with $x_0$ on the exterior face.  

For all positive integers $k$,
if $\dim(P)\geq 2k+1$, then
there is a shadow block $\cgB$ in $P$ with $k$ incomparable pairs $(a_1,b_1),\ldots,(a_k,b_k)$ such that
\begin{enumerate}
  \item $\set{(a_1,b_1),\ldots,(a_k,b_k)}$ induce a standard example $S_k$ in $P$.
  \item $a_i\parallel y_{\cgB}$ in $P$ and $b_i>y_{\cgB}$ in $P$, for all $i\in[k]$.
  \item $(a_i,a_j)$ is a left pair and $(b_i,b_j)$ is a left pair for all $i,j\in[k]$ with $i<j$.
\end{enumerate}

\end{corollary}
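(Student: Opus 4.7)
The plan is to exploit the reversible cover of $\Inc(P)$ constructed in the proof of Theorem~\ref{thm:dim-boundedness}: namely $X_1, X_2$ (covering all left, right, and outside pairs, by Proposition~\ref{pro:LO-RO}) together with the sets $I_\theta(m)$ for $\theta \in \{0, 1\}$ and $m \geq 1$, each of which is reversible by Lemma~\ref{lem:dim(J(m))}. First I would observe that if every $I_\theta(m)$ with $m \geq k$ were empty, then the $2k$ sets $X_1, X_2, I_0(1), \ldots, I_0(k-1), I_1(1), \ldots, I_1(k-1)$ would cover $\Inc(P)$, forcing $\dim(P) \leq 2k$ and contradicting $\dim(P) \geq 2k+1$. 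Hence there exist $\theta \in \{0, 1\}$ and a pair in some $I_\theta(m)$ with $m \geq k$, and therefore a directed path of size at least $k$ in $H_\theta$. Truncating gives a directed path $((a_1, b_1), \ldots, (a_k, b_k))$ of size exactly $k$.

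Next I would convert this path into a \emph{strong} directed path of the same length, reusing the inductive construction from the proof of the preceding proposition: keep each first coordinate $a_s$ fixed, retain $b_s$ when the outgoing edge is strong, and replace $b_s$ with the helper's second coordinate $v$ when the outgoing edge is weak. The verification that each updated edge is strong is exactly the case analysis using Propositions~\ref{pro:force-sides} and~\ref{pro:force-comp} inside the common shadow block, as in the original argument. An updated pair $(a_s, v)$ still belongs to $I_\theta$ with the same address: by Lemma~\ref{lem:cgI-comprehensive} applied to the alternating cycle that produced the helper, $v \in B_\cgB$, and strictness of that cycle forces $a_s \parallel_P v$, so Proposition~\ref{pro:all-inc-are-inside} places $(a_s, v)$ in $I_\theta$ with address $(j, \cgB)$. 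Relabelling the resulting strong path as $((a_1, b_1), \ldots, (a_k, b_k))$, Lemma~\ref{lem:cgI-comprehensive} supplies a common address $(j, \cgB)$ with $a_i \in A_\cgB$ and $b_i \in B_\cgB$ for every $i \in [k]$, which immediately yields condition~(ii).

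Finally I would iterate Proposition~\ref{pro:strong-transitive} to promote the consecutive strong edges along the path to strong edges between any two of its pairs; for $i < j$ this delivers $a_i <_P b_j$, $a_j <_P b_i$, and $(a_i, a_j)$ a left pair. Combined with $a_i \parallel_P b_i$ from inside-pair membership, this certifies that $\{a_1, \ldots, a_k\} \cup \{b_1, \ldots, b_k\}$ induces $S_k$ in $P$ (property~(i)), and transitivity of left pairs (Proposition~\ref{pro:details-on-LR}.\ref{pro:item:left-pair-is-transitive}) gives the $a$-part of property~(iii). The remaining $b$-part follows by applying Corollary~\ref{cor:sac-2} to the strict alternating cycle $((a_i, b_i), (a_j, b_j))$ of length two inside $I_\theta$: since $(a_i, a_j)$ is a left pair, so is $(b_i, b_j)$. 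The main technical obstacle is keeping track, through the strong-path conversion, that every updated pair remains in $I_\theta$ with the shared address $(j, \cgB)$---without this, neither the iterated application of Proposition~\ref{pro:strong-transitive} nor Corollary~\ref{cor:sac-2} is legitimate at the end. Once Lemma~\ref{lem:cgI-comprehensive} is invoked on the helper cycles, however, the common address is preserved and the remaining verifications are routine.
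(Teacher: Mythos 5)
Your proposal is correct and is essentially the argument the paper intends (the corollary is stated without an explicit proof): the counting with the $2k$ reversible sets $X_1,X_2,I_0(1),\dots,I_0(k-1),I_1(1),\dots,I_1(k-1)$, extraction of a directed path of size $k$ in some $H_\theta$, the strong-path conversion from the final proposition, iterated use of Proposition~\ref{pro:strong-transitive}, Lemma~\ref{lem:cgI-comprehensive} for the common block data, and Corollary~\ref{cor:sac-2} for the left pairs among the $b_i$'s. The only caveat is the degenerate case $k=1$, where Lemma~\ref{lem:cgI-comprehensive} (which needs a cycle of size at least $2$) is unavailable; this is a quirk of the corollary's statement rather than a defect of your argument, which is sound for $k\ge2$.
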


\bibliographystyle{abbrv}
\bibliography{boolean-dimension}
\end{document}